\newcommand{\RR}{\mathbb R}
\newcommand{\CC}{\mathbb C}
\newcommand{\ch}[1]{{\color{blue}#1}}  % Changed
\renewcommand{\ch}[1]{#1} % switch blue (changed) part to black
\newtheorem{remrk}[theorem]{Remark}
\newtheorem{summary}[theorem]{Summary}
\def\rank{\mathop{\rm rank}\nolimits}
\def\nrank{\mathop{\rm nrank}\nolimits}
\journalname{}
\begin{document}

\titlerunning{Analysis of randomized numerical methods for singular matrix pencils}
\title{Analysis of \ch{eigenvalue condition numbers for} a class of randomized numerical methods for singular matrix pencils\thanks{The work of the
first author was supported by the SNSF research project \emph{Probabilistic methods for joint and singular
eigenvalue problems}, grant number: 200021L\_192049. The work of the second author was supported by the
Slovenian Research and Innovation Agency (grants N1-0154\ch{, P1-0294}).}}
\author{Daniel Kressner \and  Bor Plestenjak}
\institute{D. Kressner \at
    Institute of Mathematics, EPFL, CH-1015 Lausanne, Switzerland\\
    \email{daniel.kressner@epfl.ch}
\and B. Plestenjak \at
    Faculty of Mathematics and Physics, University of Ljubljana\ch{, and
    Institute of Mathematics, Physics and Mechanics}, Jadranska 19,
    SI-1000 Ljubljana, Slovenia\\
    \email{bor.plestenjak@fmf.uni-lj.si}
}

\date{}

\maketitle
\begin{abstract}
The numerical solution of the generalized eigenvalue problem for a singular matrix pencil is challenging
due to the discontinuity of its eigenvalues. Classically, such problems are addressed by first extracting the
regular part through the staircase form and then applying a standard solver, such as the QZ
algorithm\ch{, to that regular part}.
Recently, several novel approaches have been proposed to transform the singular pencil into a regular pencil
by relatively simple randomized modifications. In this work, we analyze three such methods by Hochstenbach,
Mehl, and Plestenjak that modify, project, or augment the pencil using random matrices. All three methods rely
on the normal rank and do not alter the finite eigenvalues of the original pencil.
%In this work we analyze these methods and
We show that the eigenvalue condition numbers of the transformed pencils are unlikely to be
much larger than the $\delta$-weak eigenvalue condition numbers, introduced by Lotz and Noferini, of the
original pencil. This not only indicates favorable numerical stability but also \ch{reconfirms} that these condition
numbers are a reliable criterion for detecting \ch{simple} finite eigenvalues. We also provide evidence that, from a
numerical stability perspective, the use of complex instead of real random matrices is preferable even for
real singular matrix pencils and real eigenvalues.
\ch{As a side result, we provide sharp left tail bounds for a product of two independent random variables
distributed with the generalized beta distribution of the first kind or Kumaraswamy distribution.}\\

\keywords{Singular pencil \and Singular generalized eigenvalue problem \and Eigenvalue condition number \and
Randomized numerical method \and Random matrices}
\subclass{65F15 \and 15A18 \and 15A22 \and 15A21 \and 47A55 \and 68W20 \and 15B52}
\end{abstract}

\section{Introduction}

The purpose of this work is to study three recent numerical methods, introduced in \cite{HMP19,HMP22}, for computing finite
eigenvalues of a square singular matrix pencil $A-\lambda B$, that is, $A,B\in\CC^{n\times n}$ and $\det(A-\lambda B)\equiv 0$.
We say that $\lambda_0\in\CC$ is an \emph{eigenvalue} of $A-\lambda B$ if $\textrm{rank}(A-\lambda_0 B) < \textrm{nrank}(A,B)$, where
\[
\nrank(A,B) := \max_{\zeta \in \CC} \textrm{rank}(A-\zeta B)<n
\]
is \emph{the normal rank} of the pencil. Similarly, if $\textrm{rank}(B) < \textrm{nrank}(A,B)$
then we say that $A-\lambda B$ has infinite eigenvalue(s).

A major difficulty when working with a singular pencil numerically is the discontinuity of its eigenvalues, that is, the
existence of (arbitrarily small) perturbations of $A,B$ that completely destroy \ch{the} eigenvalue accuracy.
To circumvent this
phenomenon, it is common to first extract the regular part from the staircase form \ch{of the pencil}~\cite{VD79} before applying the QZ
algorithm~\cite{Moler1973,Kragstrom2006} to compute \ch{the} eigenvalues. Notably, the popular software GUPTRI~\cite{Demmel1993,Demmel1993a}
is based on this approach. Numerically, the computation of the staircase form requires several rank decisions and these decisions
tend to become increasingly difficult as the algorithm proceeds, which can ultimately lead to a failure of correctly identifying
and extracting the regular part~\cite{EM,MPl14}.

Despite the discontinuity of \ch{the} eigenvalues mentioned above, Wilkinson~\cite{wilkinson1979kronecker} observed
that the QZ
algorithm directly applied to the original singular pencil usually returns  the eigenvalues of the regular part with
reasonable accuracy. De Ter\'{a}n, Dopico, and Moro~\cite{DM0D8} explained this phenomenon by developing a perturbation
theory for singular pencils, implying that the set of perturbation directions causing discontinuous eigenvalue changes
has measure zero. Later on, Lotz and Noferini~\cite{LotzNoferini} turned this theory into quantitative statements, by
measuring the set of perturbations leading to large eigenvalue changes and defining the notion of weak eigenvalue condition
number for singular pencils.

It is important to note that Wilkinson's observation does not immediately lead to a practical algorithm, because the (approximate)
eigenvalues returned by the QZ algorithm are mixed with the spurious eigenvalues originating from the (perturbed) singular part
and it is a nontrivial task to distinguish these two sets. During the last few years, several methods have been proposed to
circumvent this difficulty.

Inspired by the findings in~\cite{DM0D8}, Hochstenbach, Mehl, and Plestenjak~\cite{HMP19} proposed to introduce a  modification of the form
\begin{equation} \label{eq:HMPpert}
    \widetilde A-\lambda \widetilde B := A-\lambda B+\tau \, (UD_AV^*-\lambda \, UD_BV^*)
\end{equation}
for matrices $D_A,D_B\in\CC^{k\times k}$ with $k:= n-\textrm{nrank}(A,B)$, random matrices $U,V\in \CC^{n\times k}$, and a scalar
$\tau \not= 0$. Generically, $\widetilde A-\lambda \widetilde B$ is a regular pencil and the regular part of $A-\lambda B$ is exactly
preserved\ch{, i.e., if $\lambda_i$ is a finite eigenvalue of $A-\lambda B$ then
$\lambda_i$ is an eigenvalue of $\widetilde A-\lambda \widetilde B$ with the same partial multiplicities.} More specifically, $\lambda_i$ is an eigenvalue of $A-\lambda B$ if and only if $\lambda_i$ is an eigenvalue of
$\widetilde A-\lambda \widetilde B$ such that its right/left  eigenvectors $x$, $y$ satisfy $V^*x=0$ and $U^*y=0$.
The latter property is used to extract the eigenvalues of $A-\lambda B$ from the computed eigenvalues of $\widetilde A-\lambda \widetilde B$.

In \cite{HMP22}, two different variations of the approach from~\cite{HMP19} described above are proposed. Instead of
adding a modification, the pencil is projected to the generically regular pencil $U_\perp^*AV_\perp-\lambda U_\perp^*BV_\perp$
for random matrices $U_\perp,V_\perp\in \CC^{n\times (n-k)}$, and the eigenvalues of $A-\lambda B $ are extracted from the
computed eigenvalues of the smaller pencil. The third method analyzed in this work consists of computing the eigenvalues of
$A-\lambda B$ from the augmented generically regular pencil
\[
    \left[ \begin{array}{cc} A & UT_A \\ S_AV^* & 0 \end{array} \right]
    -
    \lambda \, \left[ \begin{array}{cc} B & UT_B \\ S_BV^* & 0 \end{array} \right],
\]
where $S_A,S_B,T_A,T_B\in\CC^{k\times k}$ and $U,V\in \CC^{n\times k}$ are random matrices. For both variants, it holds
generically that the regular part of $A-\lambda B$ is fully preserved in exact arithmetic.
\ch{Note that roundoff error affects this eigenvalue preservation property when forming the modified pencils~\eqref{eq:HMPpert} and $U_\perp^*AV_\perp-\lambda U_\perp^*BV_\perp$ in finite precision.}

One goal of this work is to show that the modifications introduced by the three methods above are numerically safe. More specifically,
we show that, with high probability, the eigenvalue condition numbers of the modified pencils are not much larger than the weak
eigenvalue condition numbers of the original pencil. In particular, these methods can be expected to return good accuracy for
well-conditioned eigenvalues of $A-\lambda B$ in the presence of roundoff error. Another implication of our result is that the
eigenvalue condition numbers of the modified pencils represent a reliable complementary criterion for identifying reasonably
well-conditioned finite eigenvalues in any of the algorithms from \cite{HMP19} and \cite{HMP22}.

\begin{paragraph}{Related work}
Closer to the analyses in~\cite{DM0D8,LotzNoferini}, it was recently suggested in~\cite{KreGlib22} to perturb the full pencil:
$A+\tau E - \lambda (B+\tau F)$, where $E,F\in\CC^{n\times n}$ are random Gaussian matrices and $\tau>0$ is small but well above
the level of machine precision. Unlike for the three methods mentioned above, the regular part of $A-\lambda B$ is not preserved
by this perturbation. On the other hand, the direct connection to~\cite{LotzNoferini} allows to facilitate their analysis and use
the computed eigenvalue condition numbers of the perturbed pencil as a criterion to identify finite eigenvalues of the original pencil.
In~\cite[P. 2]{KreGlib22}, it was stated that a similar analysis would be more difficult for the method from~\cite{HMP19} because of
the structure imposed on the random perturbation in~\eqref{eq:HMPpert}. In this work, we will address this question and carry over
the analysis from~\cite{KreGlib22,LotzNoferini} to the three methods above. In particular, our analysis confirms that the computed
eigenvalue condition numbers can be used as a reliable indicator for such algorithms as well.
\end{paragraph}

\begin{paragraph}{Outline}
 The structure of the paper is as follows. In Section~\ref{sec:weak_cnd} we review  basic concepts for singular pencils as well
 as $\delta$-weak condition numbers. In Section~\ref{sec:num_methods} we present the three randomized numerical methods that we
 analyze in Section~\ref{sec:analysis}\ch{, where we also obtain the new left tail bounds}. This is followed by numerical examples in  Section~\ref{sec:numresults}. \ch{In the appendix we provide results obtained with symbolic computation that verify the results from
 latter sections.}
\end{paragraph}

\section{Preliminaries}\label{sec:weak_cnd}

\subsection{Reducing subspaces and eigenvectors}

In order to define eigenvectors of a singular pencil according to~\cite{DM0D8,LotzNoferini}, we first introduce the Kronecker
canonical form (KCF) and the notion of minimal reducing subspaces, see, e.g.,~\cite{Gan59,VD83}.

\begin{theorem}[Kronecker canonical form]\label{thm:knf}
Let $A,B\in \CC^{n\times n}$. Then there exist nonsingular matrices $P, Q\in \CC^{n\times n}$ such that
\begin{equation}\label{eq:knf}
  P\,(A-\lambda B)\,Q=\left[\begin{array}{cc} R(\lambda)&0\\ 0 & S(\lambda)\end{array}\right], \qquad
  R(\lambda)=\left[\begin{array}{cc}J-\lambda I_r&0\\ 0&I_s-\lambda N\end{array}\right],
\end{equation}
where $J$ and $N$ are in Jordan canonical form with $N$ nilpotent. Furthermore,
\[
  S(\lambda)=
  {\rm diag}\big(L_{m_1}(\lambda), \dots, L_{m_k}(\lambda), \ L_{n_{1}}(\lambda)^T, \dots, L_{n_{k}}(\lambda)^T\big),
\]
where $L_{j}(\lambda)=[0 \ \, I_{j}]-\lambda \, [I_{j} \ \, 0]$ is of size $j\times (j+1)$, and
$m_i, n_i\ge 0$ for $i=1,\dots,k$, where $k=n-\nrank(A,B)$.
\end{theorem}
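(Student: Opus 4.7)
The plan is to follow the classical Kronecker--Weierstrass strategy, arguing by induction on $n$ and separating the regular part from the singular part before normalizing each. First I would establish a coarse decomposition: by strict equivalence there exist $P,Q$ so that $P(A-\lambda B)Q$ splits into a block-diagonal pencil with a regular diagonal block and a singular diagonal block having no regular direct summand. The existence of this splitting can be extracted from the minimal reducing subspaces of the pencil or, equivalently, from the theory of minimal indices: the subspace spanned by polynomial right null vectors of minimal degree, together with its image under $B$, forms a reducing subspace that can be deflated, and a dual construction works on the left.

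For the regular block, I would choose $\mu \in \CC$ with $A-\mu B$ invertible (which is possible because the determinant is not identically zero on this block), write $A-\lambda B = (A-\mu B) - (\lambda-\mu)B$, and left-multiply by $(A-\mu B)^{-1}$ to obtain a pencil of the form $I - (\lambda-\mu)M$ with $M=(A-\mu B)^{-1}B$. Splitting $M$ via its Jordan decomposition into invertible and nilpotent parts and then applying the substitution $\lambda \mapsto \mu + 1/\lambda'$ on the invertible part to recover finite Jordan blocks yields the two summands $J-\lambda I_r$ and $I_s-\lambda N$ after a harmless change of variables.

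For the singular block, I would locate a polynomial right null vector $p(\lambda)=\sum_{i=0}^{m} p_i \lambda^i$ of minimal degree $m$ and verify that $p_0,\dots,p_m$ together with $Bp_1,\dots,Bp_m$ furnish bases of a reducing pair on which the pencil acts precisely as $L_m(\lambda)$. Peeling off this block and iterating produces all the right singular blocks $L_{m_i}$; the dual argument with left null vectors produces the transposed blocks $L_{n_i}^T$. The count $k=n-\nrank(A,B)$ emerges naturally because each $L_j$ block contributes exactly one to the dimension of the corresponding singular subspace, and the regular part contributes nothing.

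The delicate point is not any single extraction step but the bookkeeping needed to show that the singular and regular parts genuinely decouple and that iterative deflation of singular blocks preserves minimality at every stage. This hinges on the invariance of the minimal indices $m_i,n_i$ under strict equivalence, which is a nontrivial combinatorial statement; equalizing the number of $L_{m_i}$ and $L_{n_i}^T$ blocks to the same $k$ reflects the square shape $n\times n$. Since Theorem~\ref{thm:knf} is a classical result, I would ultimately invoke \cite{Gan59,VD83} for the full details rather than reproducing this bookkeeping argument.
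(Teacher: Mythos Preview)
The paper does not prove Theorem~\ref{thm:knf}; it states the Kronecker canonical form as a classical result with a reference to \cite{Gan59,VD83} and moves on. Your proposal ultimately does the same thing---you sketch the standard Kronecker--Weierstrass reduction and then defer to \cite{Gan59,VD83} for the details---so your treatment is consistent with the paper's. The sketch you give (split off the regular part, reduce it via $(A-\mu B)^{-1}B$ to Weierstrass form, then peel off singular blocks using minimal-degree polynomial null vectors) is indeed the classical route found in those references, and nothing in it is wrong at the level of an outline.
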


The pencil $R(\lambda)$ in~\eqref{eq:knf} is called \emph{the regular part} of $A-\lambda B$ and contains the eigenvalues of
$A-\lambda B$\ch{, where \emph{the Jordan part} $J-\lambda I_r$ contains \emph{the Jordan blocks} of the finite eigenvalues of
$A-\lambda B$. If $\lambda_0$ is a finite eigenvalue of $A-\lambda B$, then its \emph{partial multiplicities} are
the sizes of the Jordan blocks associated with $\lambda_0$.}
A finite eigenvalue is called simple if it is a simple root of $\det R(\lambda)$. The pencil $S(\lambda)$ is called
\emph{the singular part} of $A-\lambda B$ and contains \emph{right singular blocks} $L_{m_1}(\lambda),\dots,L_{m_k}(\lambda)$
and \emph{left singular blocks} $L_{n_{1}}(\lambda)^T,\dots,L_{n_{k}}(\lambda)^T$, where $m_1,\dots,m_k$ and
$n_{1},\dots,n_{k}$ are called the \emph{right} and \emph{left minimal indices} of the pencil, respectively.

We say that a subspace ${\cal M}$ is a \emph{reducing subspace} \cite{VD83} for the pencil $A-\lambda B$ if
${\rm dim}(A{\cal M} + B{\cal M}) = {\rm dim}({\cal M}) - k$, where $k=n-\nrank(A,B)$ counts the number of right singular blocks.
\emph{The minimal reducing subspace} ${\cal M}_{\rm RS}(A,B)$ is the intersection of all reducing subspaces and is spanned
by the columns of $Q$ corresponding to the blocks $L_{m_1}(\lambda),\ldots,L_{m_k}(\lambda)$. Analogously, ${\cal L}$ is a
\emph{left reducing subspace} for the pencil $A-\lambda B$ if ${\rm dim}(A^*{\cal L}+B^*{\cal L})={\rm dim}({\cal L}) - k$ and
\emph{the minimal left reducing subspace} ${\cal L}_{\rm RS}(A,B)$ is the intersection of all left reducing subspaces.

For an eigenvalue $\lambda_0\in\CC$ of $A-\lambda B$, a nonzero vector $x\in\CC^n$ is called a \emph{right eigenvector}
if $(A-\lambda_0 B)x=0$ and $x\not\in {\cal M}_{\rm RS}(A,B)$. A nonzero vector $y\in\CC^n$ such that $y^*(A-\lambda_0 B)=0$
and $y\not\in {\cal L}_{\rm RS}(A,B)$ is called a \emph{left eigenvector}. This agrees with the definition of eigenvectors from
\cite{DopNof,LotzNoferini}. Compared to a regular pencil, the eigenvectors of singular pencils have a much larger degree of
non-uniqueness, due to components from the minimal reducing subspaces.

\subsection{Eigenvalue perturbation theory}

Suppose that we perturb an $n\times n$ matrix pencil $A-\lambda B$ into
\begin{equation}\label{eq:pertAB}
    \widetilde A-\lambda \widetilde B:=A+\epsilon E-\lambda (B+\epsilon F),
\end{equation}
where $\epsilon>0$.
We define $\|(E,F)\|:=(\|E\|_F^2+\|F\|_F^2)^{1/2}$, where $\|\cdot\|_F$ denotes the Frobenius norm of a matrix. When
$\|(E,F)\|=1$ we can identify the pencil $E-\lambda F$ with a point on the unit sphere in $\CC^{2n^2}$ and think of
$(E,F)$ as a direction of the perturbation~\eqref{eq:pertAB}.

Before addressing the singular case, let us first recall the classical eigenvalue perturbation theory~\cite{StewartSun}
for a \emph{regular} pencil $A-\lambda B$. Consider a simple \ch{finite} eigenvalue $\lambda_0\in\CC$ of $A-\lambda B$ with normalized
right/left eigenvectors $x$, $y$. For $\|(E,F)\|=1$ and sufficiently small $\epsilon>0$ there exists an eigenvalue
$\lambda_0(\epsilon)$ of the perturbed pencil \eqref{eq:pertAB} satisfying the perturbation expansion
\begin{equation}\label{eq:pert_reg_case}
  |\lambda_0(\epsilon)-\lambda_0|=
  \frac{|y^*(E-\lambda_0 F)x|}{|y^*Bx|}\epsilon +{\cal O}(\epsilon^2)
  \le \frac{(1+|\lambda_0|^2)^{1/2}}{|y^*Bx|}\epsilon +{\cal O}(\epsilon^2)
\end{equation}
\ch{as $\epsilon \to 0$.} %BP: otherwise the equation is too wide
Note that the inequality becomes an equality for the direction $E=(1+|\lambda_0|^2)^{-1/2}yx^*$, $F=-\overline{\lambda}_0E$.
In turn, the \emph{absolute condition number} of $\lambda_0$, defined as
\begin{equation}\label{eq:abs_cond_num}
  \kappa(\lambda_0)=\lim_{\epsilon\to 0}\sup_{\|(E,F)\|\le 1}
  \ch{\frac{1}{\epsilon}}|\lambda_0(\epsilon)-\lambda_0|,
\end{equation}
satisfies
\begin{equation}\label{eq:kappa1}
  \kappa(\lambda_0)=1 / \gamma(\lambda_0),\ {\rm where}\ \gamma(\lambda_0)=|y^*Bx|(1+|\lambda_0|^2)^{-1/2};
\end{equation}
see, e.g., \cite[Lemma 3.1 and Eq. (3.3)]{Fraysse}.

For a singular pencil, the definition~\eqref{eq:abs_cond_num} always leads to an infinite condition number because of the
discontinuity of eigenvalues. To address this, we first recall the eigenvalue expansion by De Ter\'an, Dopico, and
Moro~\cite[Corollary 2]{DM0D8}.

\begin{theorem}\label{thm:DeTeranDopico}
Let $\lambda_0$ be a \ch{finite} simple eigenvalue of an $n\times n$ pencil $A-\lambda B$ of normal rank $n-k$, $k\ge 1$. Let
$X=[X_1\ x]$, $Y=[Y_1\ y]$ be $n\times (k+1)$ matrices with orthonormal columns such that: $X_1$ is a basis for
${\rm ker}(A-\lambda_0 B)\cap {\cal M}_{\rm RS}(A,B)$, $X$ is a basis for ${\rm ker}(A-\lambda_0 B)$, $Y_1$
is a basis for ${\rm ker}((A-\lambda_0 B)^*)\cap {\cal L}_{\rm RS}(A,B)$, and $Y$ is a basis for ${\rm ker}((A-\lambda_0 B)^*)$.
If $E-\lambda F$ is such that \ch{$\det(Y_1^*(E-\lambda_0 F)X_1)\ne 0$},
%$\Y_1^*E X_1- \lambda Y_1^*FX_1$ is a regular pencil,
then, for sufficiently small $\epsilon>0$,
there exists an eigenvalue $\lambda_0(\epsilon)$ of the perturbed pencil \eqref{eq:pertAB} such that
\begin{equation}\label{eq:ocena_det_det}
  \lambda_0(\epsilon)=\lambda_0 - \frac{\det(Y^*(E-\lambda_0 F)X)}
  {y^*Bx\cdot\det(Y_1^*(E-\lambda_0 F)X_1)}\epsilon +{\cal O}(\epsilon^2).
\end{equation}
\end{theorem}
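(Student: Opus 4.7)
The plan is to reduce the eigenvalue condition for the perturbed pencil to a determinantal equation of size $(k+1)\times(k+1)$ via a Schur-complement argument, after first using the Kronecker canonical form (KCF) to identify the structure of $Y^*BX$. By Theorem~\ref{thm:knf} one may assume, after an invertible change of basis that only renames $E$ and $F$, that $A-\lambda B$ is already in KCF, with $\lambda_0$ sitting in a single $1\times 1$ Jordan block. Then $\ker(A-\lambda_0 B)$ has dimension $k+1$: each right singular block contributes one null vector at $\lambda_0$ (together spanning $X_1$ after orthonormalization), and the isolated Jordan block contributes the remaining direction $x$; analogous statements hold for $Y_1,y$ on the left.

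A direct block-by-block inspection in the KCF shows that $BX_1$ is supported in the row space of the right singular blocks, which is disjoint from both the regular coordinate of $y$ and the left-singular row space of $Y_1^*$; the analogous observation handles $Y_1^*Bx$. These facts yield the key structural identity
\[ Y^*BX=(y^*Bx)\,e_{k+1}e_{k+1}^*, \]
i.e., the only nonzero entry of this $(k+1)\times(k+1)$ matrix sits in the bottom-right corner. (Here $e_{k+1}$ denotes the last standard basis vector of $\CC^{k+1}$.)

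Next, complete $X$ and $Y$ to unitary bases of $\CC^n$. In these bases $A-\lambda_0 B$ is block-diagonal with an invertible $(n-k-1)\times(n-k-1)$ lower block, so a Schur complement reduces the condition that $A+\epsilon E-\lambda(B+\epsilon F)$ is singular, locally near $(\lambda_0,0)$, to $\det(M(\lambda,\epsilon)+R(\lambda,\epsilon))=0$ where $R=\mathcal{O}(\epsilon^2+\epsilon|\lambda-\lambda_0|)$ and
\[ M(\lambda,\epsilon):=Y^*\bigl((A-\lambda B)+\epsilon(E-\lambda F)\bigr)X=\epsilon\,C(\lambda)-(\lambda-\lambda_0)(y^*Bx)\,e_{k+1}e_{k+1}^*, \]
with $C(\lambda):=Y^*(E-\lambda F)X$, using $(A-\lambda B)X=-(\lambda-\lambda_0)BX$. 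Expanding $\det M$ by multilinearity in its last column produces two clean summands, one proportional to $\epsilon^{k+1}\det C(\lambda)$ and one to $(\lambda-\lambda_0)\,\epsilon^{k}\det\!\bigl(Y_1^*(E-\lambda F)X_1\bigr)$. Under the nondegeneracy hypothesis the equation is regular, so the implicit function theorem produces a unique root $\lambda_0(\epsilon)=\lambda_0+\mathcal{O}(\epsilon)$; substituting this ansatz and keeping only the leading term gives~\eqref{eq:ocena_det_det}. The remaining $k$ roots of $\det M=0$ converge, as $\epsilon\to 0$, to the roots of $\det\bigl(Y_1^*(E-\lambda F)X_1\bigr)$ and correspond to the spurious eigenvalues created by perturbing the singular part.

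The main obstacle will be to control the Schur-complement remainder $R$ carefully enough to show that its contribution to $\det(M+R)$ is absorbed into the $\mathcal{O}(\epsilon^2)$ error of the final expansion; this requires tracking how an additive perturbation of order $\epsilon^2+\epsilon|\lambda-\lambda_0|$ propagates through the $(k+1)\times(k+1)$ determinantal expansion (after substitution $\lambda-\lambda_0=\mathcal{O}(\epsilon)$). A secondary technical point is basis invariance: since $X_1$ and $Y_1$ are determined only up to unitary transformations within the minimal reducing subspaces, one must check that both determinants in the ratio pick up the same unimodular factor and therefore cancel.
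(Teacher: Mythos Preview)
The paper does not prove Theorem~\ref{thm:DeTeranDopico}; it is quoted from De~Ter\'an, Dopico, and Moro~\cite[Corollary~2]{DM0D8} as background. So there is no ``paper's own proof'' to compare against, and your sketch is an independent argument. The overall strategy---establish $Y^*BX=(y^*Bx)\,e_{k+1}e_{k+1}^*$, reduce via a Schur complement, and read off the first-order term from the resulting $(k+1)\times(k+1)$ determinant---is sound and is essentially how one proves such results. Two technical points deserve tightening.

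First, the passage to KCF is not unitary, so you cannot simultaneously assume $A-\lambda B$ is in KCF \emph{and} that $X,Y$ have orthonormal columns. Fortunately you do not need KCF at all here: the identities $Y_1^*BX_1=0$, $Y_1^*Bx=0$, $y^*BX_1=0$ (hence $Y^*BX=(y^*Bx)e_{k+1}e_{k+1}^*$) follow directly from the definitions of $\mathcal M_{\rm RS}$ and $\mathcal L_{\rm RS}$, and are in fact recorded in the paper immediately after the theorem.

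Second, the implicit function theorem does not apply to $\det(M+R)=0$ at $(\lambda_0,0)$ as stated: at $\epsilon=0$ the pencil is singular for every $\lambda$, so the determinantal equation vanishes identically in $\lambda$ and its $\lambda$-derivative is zero. Relatedly, your remainder estimate $R=\mathcal O(\epsilon^2+\epsilon|\lambda-\lambda_0|)$ is missing the $|\lambda-\lambda_0|^2$ contribution coming from the $(\lambda-\lambda_0)Y^*BX_\perp$ and $(\lambda-\lambda_0)Y_\perp^*BX$ terms in the off-diagonal blocks. Both issues are resolved by the rescaling $\lambda-\lambda_0=\epsilon\nu$: then $(M+R)/\epsilon = C(\lambda_0)-\nu\,(y^*Bx)\,e_{k+1}e_{k+1}^*+\mathcal O(\epsilon)$, and the equation $\det\bigl((M+R)/\epsilon\bigr)=0$ is genuinely regular in $\nu$ at $\epsilon=0$ (its $\nu$-derivative equals $-(y^*Bx)\det(Y_1^*(E-\lambda_0 F)X_1)\neq 0$ by hypothesis), so the implicit function theorem now yields $\nu(\epsilon)=\nu(0)+\mathcal O(\epsilon)$ and hence~\eqref{eq:ocena_det_det}.
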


The expansion~\eqref{eq:ocena_det_det} allows one to define \emph{the directional sensitivity} for perturbations
$E-\lambda F$, $\|(E,F)\|=1$, satisfying the condition of Theorem~\ref{thm:DeTeranDopico}:
\begin{equation}\label{eq:dir_sens}
  \sigma_{E,F}(\lambda_0)=\left|\frac{\det(Y^*(E-\lambda_0 F)X)}
  {y^*Bx\cdot\det(Y_1^*(E-\lambda_0 F)X_1)}\right|;
\end{equation}
see~\cite[Definition 2.4 and Corollary 3.3]{LotzNoferini}.
\ch{We can now generalize the definition of $\gamma(\lambda_0)$ in~\eqref{eq:kappa1} to singular pencils by using
the right/left eigenvectors $x$, $y$ from Theorem~\ref{thm:DeTeranDopico}.}
Because of the appearance of the factor $y^*Bx$ in the denominator of~\eqref{eq:dir_sens}, we expect that the
quantity $1/\gamma(\lambda_0)$ continues to play a crucial role in determining the sensitivity
of an eigenvalue. However, it is important to note that $x,y$ are particular choices of eigenvectors made in
Theorem~\ref{thm:DeTeranDopico}: the right eigenvector $x$ is orthogonal to the right minimal reducing subspace
${\cal M}_{\rm RS}$ and the left eigenvector $y$ is orthogonal to ${\cal L}_{\rm MR}$. Under these constraints, the
eigenvectors $x$ and $y$ of the simple eigenvalue $\lambda_0$ become uniquely determined up to multiplication by unit complex numbers.
It follows from $Y_1^* B X_1 = 0$\ch{, $Y_1^*Bx=0$ and $y^*BX_1=0$} that this particular choice maximizes $|y^*B x|$, i.e., for all vectors $z$ and $w$ such that
$\|z\|_2=\|w\|_2=1$, $(A-\lambda_0 B)z=0$, and $w^*(A-\lambda_0 B)=0$, it holds that
\begin{equation}\label{eq:ocena_gamma}
  |w^*Bz|(1+|\lambda_0|^2)^{-1/2} \le \gamma(\lambda_0).
\end{equation}

\begin{remrk}\label{rem:real_complex}
If $A,B$ are real matrices and $\lambda_0$ is a real simple eigenvalue, then the matrices $X,X_1,Y,Y_1$ and vectors $x,y$ in
Theorem~\ref{thm:DeTeranDopico} can be chosen to be real as well.
\end{remrk}

Clearly, the perturbations for %which $Y_1^*E X_1- \lambda Y_1^*FX_1$ is singular (and, in turn,
\ch{which the quantity
$\det(Y_1^*(E-\lambda_0 F)X_1)$ in the denominator of~\eqref{eq:ocena_det_det} vanishes} form a set of measure zero in the
unit sphere in $\CC^{2n^2}$. In other words, this event has zero probability if we draw $(E,F)$ uniformly at random from the
the unit sphere in $\CC^{2n^2}$, which we will denote by $(E,F)\sim {\cal U}(2n^2)$.
The  \emph{$\delta$-weak condition number} introduced by Lotz and Noferini~\cite[Definition 2.5]{LotzNoferini} offers a more
refined picture by measuring the tightest upper bound $t$ such that the directional sensitivity~\eqref{eq:dir_sens} stays
below $t$ with probability at least $1-\delta$.

\begin{definition}\label{def:delta-weak}
Let $\lambda_0\in\CC$ be \ch{a finite simple} eigenvalue of a singular pencil $A-\lambda B$. The
\emph{$\delta$-weak condition number} of $\lambda_0$ is defined as
\[
  \kappa_w(\lambda_0;\delta)=\inf\big\{t\in \RR:\ \mathbb P(\sigma_{E,F}(\lambda_0)<t)\ge 1-\delta\big\},\quad (E,F)\sim {\cal U}(2n^2).
\]
\end{definition}

\ch{If $\lambda_0$ is a finite simple eigenvalue of a regular pencil $A-\lambda B$, it follows from \eqref{eq:pertAB}
and \eqref{eq:kappa1} that
$\sigma_{E,F}(\lambda_0)=|y^*(E-\lambda_0 F)x|/|y^*Bx|\le \kappa(\lambda_0)=1/\gamma(\lambda_0)$
for all $\|(E,F)\|=1$. Therefore, if we apply Definition~\ref{def:delta-weak} to
a regular pencil, it follows from \eqref{eq:abs_cond_num} that}
$\kappa_w(\lambda_0;\delta)$
converges to $\kappa(\lambda_0) = 1/\gamma(\lambda_0)$
monotonically from below as $\delta \downarrow 0$. The following result from~\cite[Theorem 5.1]{LotzNoferini}
and~\cite[Theorem 3.1]{KreGlib22} suggests to use $1/\gamma(\lambda_0)$ as a proxy for eigenvalue sensitivity in the singular case as well.
\begin{theorem} \label{theorem:sensitvity}
Let $\lambda_0\in\CC$ be \ch{a finite simple} eigenvalue of an $n\times n$ singular pencil $A-\lambda B$ of normal
rank $n-k$. Then for $\delta \le k / (2n^2)$ it holds that
\[
  \frac{1}{\sqrt{\delta 2n^2}} \cdot \frac{1}{\gamma(\lambda_0)} \le \kappa_w(\lambda_0;\delta)\le  \frac{\sqrt{k}}{\sqrt{\delta 2n^2}} \cdot \frac{1}{\gamma(\lambda_0)},
\]
where $\gamma(\lambda_0)$ is defined as in \eqref{eq:kappa1} with the right/left eigenvectors $x$, $y$ from Theorem~\ref{thm:DeTeranDopico}.
\end{theorem}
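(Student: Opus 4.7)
The plan is to reduce $\sigma_{E,F}(\lambda_0)$ to a quotient of elementary Gaussian functionals with a transparent distribution, and then to extract the claimed tail bounds from the resulting product structure.

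The first step is to replace the uniform law on the sphere by its Gaussian realization: if $G_E, G_F$ are independent $n\times n$ matrices with i.i.d.\ standard complex Gaussian entries, then $(E,F) := (G_E, G_F)/\|(G_E, G_F)\|_F$ has the law $\mathcal{U}(2n^2)$. Because the numerator $|\det(Y^*(E-\lambda_0 F)X)|$ is homogeneous of degree $k+1$ in $(E,F)$ while the denominator $|y^*Bx|\cdot|\det(Y_1^*(E-\lambda_0 F)X_1)|$ is of degree $k$, this substitution contributes exactly one factor of $\|(G_E,G_F)\|_F^{-1}$. Unitary invariance of the complex Gaussian law together with the orthonormality of the columns of $X$ and $Y$ then shows that $W := Y^*(G_E - \lambda_0 G_F)X/\sqrt{1+|\lambda_0|^2}$ is a $(k+1)\times(k+1)$ matrix with i.i.d.\ standard complex Gaussian entries, and the scaling $\sqrt{1+|\lambda_0|^2}$ combines with $|y^*Bx|$ to reproduce $\gamma(\lambda_0)$, giving
\[
  \sigma_{E,F}(\lambda_0) \;\stackrel{d}{=}\; \frac{1}{\gamma(\lambda_0)} \cdot \frac{|\det W|}{|\det W_{11}|\cdot\|(G_E,G_F)\|_F},
\]
where $W_{11}$ is the top-left $k\times k$ block of $W$. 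Invoking the Schur complement identity $|\det W|^2/|\det W_{11}|^2 = |W_{22} - W_{21}W_{11}^{-1}W_{12}|^2$ and noting that, conditional on $(W_{11},W_{12})$, the right-hand side is distributed as $(1 + \|W_{11}^{-1}W_{12}\|_2^2)\,E_1$ with $E_1\sim\mathrm{Exp}(1)$ independent, yields the working identity
\[
  \sigma_{E,F}(\lambda_0)^2\,\gamma(\lambda_0)^2 \;\stackrel{d}{=}\; \frac{(1+\|W_{11}^{-1}W_{12}\|_2^2)\,E_1}{\|(G_E,G_F)\|_F^2}.
\]

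For the upper bound I would combine three ingredients: the classical tail estimate $\mathbb{P}(\|W_{11}^{-1}W_{12}\|_2^2 > x) \lesssim ck/x$ for a $k\times k$ complex Gaussian $W_{11}$ (whose smallest singular value has density of order $k$ at the origin), the tight concentration of $\|(G_E,G_F)\|_F^2\sim\mathrm{Gamma}(2n^2,1)$ around its mean $2n^2$, and the exponential tail of $E_1$. These combine to give $\mathbb{P}(\sigma_{E,F}\gamma > t) \le k/(2n^2 t^2)$; setting the right-hand side equal to $\delta$ recovers the upper bound of the theorem. For the lower bound I would exhibit a matching family of directions: restricting to the event that $\sigma_{\min}(W_{11})$ is of order $1/\sqrt{2n^2\delta}$, which has probability of order $\delta$, forces $\|W_{11}^{-1}W_{12}\|_2^2 \gtrsim 2n^2\delta$, and hence $\sigma_{E,F}\gamma \gtrsim 1/\sqrt{2n^2\delta}$ on this event.

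The main obstacle is that $\mathbb{E}[\|W_{11}^{-1}W_{12}\|_2^2] = +\infty$ already for $k=1$ (because the inverse of a $k\times k$ complex Gaussian has infinite Frobenius second moment), so $\mathbb{E}[\sigma_{E,F}^2]=+\infty$ and no Markov- or Chebyshev-style moment bound on $\mathbb{E}[\sigma^2\gamma^2]$ can succeed. One must instead work directly with the sharp heavy-tail asymptotics of $\|W_{11}^{-1}W_{12}\|_2^2$, integrating against the (independent) distributions of $E_1$ and $\|(G_E,G_F)\|_F^2$. Tracking the precise linear-in-$k$ behaviour of the density of $\sigma_{\min}(W_{11})$ at the origin is the delicate part, and is exactly what yields the $\sqrt{k}$ factor in the upper bound rather than a coarser dependence on $k$.
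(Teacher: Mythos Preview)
The paper does not prove Theorem~\ref{theorem:sensitvity}; it is quoted from \cite[Theorem~5.1]{LotzNoferini} and \cite[Theorem~3.1]{KreGlib22} as background for the analysis in Section~\ref{sec:analysis}. There is therefore no in-paper argument to compare your proposal against, only those cited sources.

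On your sketch itself: the reduction to
\[
\sigma_{E,F}(\lambda_0)^2\,\gamma(\lambda_0)^2 \;\stackrel{d}{=}\; \frac{\bigl(1+\|W_{11}^{-1}W_{12}\|_2^2\bigr)\,E_1}{\|(G_E,G_F)\|_F^2}
\]
via the Gaussian realisation and the Schur complement is correct. The subsequent tail argument, however, has a genuine gap and an unnecessary detour. The gap is dependence: $W$ is a linear image of $(G_E,G_F)$, so the numerator and $\|(G_E,G_F)\|_F^2$ are coupled; invoking ``tight concentration of $\|(G_E,G_F)\|_F^2$ around $2n^2$'' as an independent ingredient, alongside separate control of $1+\|W_{11}^{-1}W_{12}\|_2^2$ and $E_1$, does not by itself deliver the sharp constant $k/(2n^2t^2)$, and you do not say how the coupling is resolved. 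The detour is the appeal to $\sigma_{\min}(W_{11})$. One has the identity $1+\|W_{11}^{-1}W_{12}\|_2^2 = 1/|v_{k+1}|^2$, where $v$ is the unit null vector of the $k\times(k+1)$ matrix $[W_{11}\ W_{12}]$; by the paper's own Lemma~\ref{lemma:prelim} (with $\phi=2$) this gives $|v_{k+1}|^2\sim\mathrm{Beta}(1,k)$, hence the exact tail
\[
\mathbb{P}\bigl(1+\|W_{11}^{-1}W_{12}\|_2^2>x\bigr)=1-(1-1/x)^k\le k/x,
\]
with no smallest-singular-value analysis whatsoever. The linear-in-$k$ dependence you flag as ``the delicate part'' is then immediate. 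The proofs in \cite{LotzNoferini,KreGlib22} exploit this Beta/$F$-distribution structure to obtain the quantile bounds directly, rather than assembling them from three separate heavy-tail estimates.
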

The algorithms for the singular generalized eigenvalue problem presented in the next section use $\gamma(\lambda_0)$ to
identify (finite) eigenvalues numerically.

\ch{We finish this section by remarking that the absolute condition number~\eqref{eq:abs_cond_num}
is consistent with the definitions in~\cite{LotzNoferini,KreGlib22}. Following, e.g.,~\cite{HighamHigham97} one could also consider a notion of relative condition number that imposes $\|E\|_F\le \|A\|_F$ and $\|F\|_F\le \|B\|_F$ instead of $\|(E,F)\|\le 1$ on the perturbation direction in~\eqref{eq:pertAB}. In effect, both the standard and weak (absolute) condition numbers get multiplied by the factor $(\|A\|_F^2+\|B\|_F^2)^{1/2}$. Under reasonable choices of the parameters involved, this additional factor does not differ significantly for the modified pencils. In particular, our results presented for absolute condition numbers easily extend to relative condition numbers.
% \begin{equation}\label{eq:rel_cond_num}
% \kappa_R(\lambda_0)=\frac{\left(1+|\lambda_0|^2\right)^{1/2}\left(\|A\|_F^2+\|B\|_F^2\right)^{1/2}}{|y^*Bx|}=
% \frac{\left(\|A\|_F^2+\|B\|_F^2\right)^{1/2}}{\gamma(\lambda_0)},
% \end{equation}
% which is obtained when imposing  , see, e.g., . Up to minor modifications, As it turns out later in Section~\ref{sec:analysis}, the value
% $|y^*Bx|$, which appears in the numerator of both \eqref{eq:abs_cond_num} and \eqref{eq:rel_cond_num}, is the only
% relevant quantity for the analysis of the algorithms.}
}

\section{Randomized numerical methods based on the normal rank} \label{sec:num_methods}

In this section, we describe in some detail the three numerical methods from~\cite{HMP19} and \cite{HMP22} for computing the finite
eigenvalues of an $n\times n$ singular pencil $A-\lambda B$ with $\nrank(A,B)=n-k$ for $k \ge 1$. All three methods require knowledge
about the exact normal rank in order to leave the regular part intact. If this quantity is not known a priori, it can be determined
from $\rank(A-\xi_i B)$ for a small number of randomly chosen $\xi_i\in\CC$.\footnote{Although this heuristics works very well in
practice, an analysis is beyond the scope of this work. Already the seemingly  simpler special case of numerically deciding whether
$A-\lambda B$ is close to a singular pencil is quite intricate~\cite{KressnerVoigt2015}. }

\subsection{Rank-completing modification} \label{sec:rank-complete}

We first consider the rank-completing method from \cite{HMP19}, where a random pencil of normal rank $k$ is added to yield
a (generically regular) matrix pencil
\begin{equation}\label{pert}
    \widetilde A-\lambda \widetilde B := A-\lambda B+\tau \, (UD_AV^*-\lambda \, UD_BV^*),
\end{equation}
where $D_A,D_B\in\CC^{k\times k}$ are diagonal matrices such that $D_A-\lambda D_B$ is regular, $U,V\in \CC^{n\times k}$ are
matrices of rank $k$, and $\tau\in\CC$ is nonzero. Note that $k$ is the smallest normal rank for such a modification to turn a
singular into a regular pencil. The following result \cite[Summary~4.7]{HMP19}, see also \cite[Remark 3.5]{HMP22}, characterizes
the dependence of eigenvalues and eigenvectors of the modified pencil~\eqref{pert} on $\tau$, $D_A$, $D_B$, $U$, and $V^*$.

\begin{summary} \rm \label{summry}
Let $A-\lambda B$ be an $n\times n$ singular pencil of normal rank $n-k$ with left minimal indices $n_1,\dots,n_k$ and right minimal
indices $m_1,\dots,m_k$. Let $N = n_1 + \cdots + n_k$ and $M = m_1 + \cdots + m_k$. Then the regular part of $A-\lambda B$ has size
$r:=n-N-M-k$ and generically (with respect to the entries of $D_A,D_B,U,V^*$), the modified pencil $\widetilde A-\lambda \widetilde B$
defined in~\eqref{pert} is regular and its eigenvalues are classified in the following four disjoint groups:
\begin{enumerate}
\item \emph{True eigenvalues}: There are $r$ eigenvalues \ch{counted with their multiplicities} that coincide with the eigenvalues of the original pencil $A-\lambda B$ \ch{with the same partial multiplicities}.
The right/left eigenvectors $x,y$ of $\widetilde A-\lambda \widetilde B$ belonging to these eigenvalues satisfy the orthogonality
relations $V^*x=0$ and $U^*y=0$.
\item \emph{Prescribed eigenvalues}: There are $k$ eigenvalues such that $V^*x\ne 0$ for all right eigenvectors $x$ and
$U^*y\ne 0$ for all left eigenvectors $y$. These are the $k$ eigenvalues of $D_A-\lambda D_B$.
\item \emph{Random right eigenvalues}: There are $M$ eigenvalues, which are all simple and such that $V^*x=0$ for all right
eigenvectors $x$ and $U^*y\ne 0$ for all left eigenvectors $y$.
\item \emph{Random left eigenvalues}: There are $N$ eigenvalues, which are all simple and such that $V^*x\ne 0$ for all right
eigenvectors $x$ and $U^*y=0$ for all left eigenvectors $y$.
\end{enumerate}
\end{summary}
\smallskip

Summary~\ref{summry} has the following practical consequences. If we compute all eigenvalues $\lambda_i$ of~\eqref{pert}, together
with the (normalized) right and left  eigenvectors $x_i$ and $y_i$ for $i=1,\ldots,n$, then $\max(\|V^*x_i\|_2,\|U^*y_i\|_2)=0$
if and only if $\lambda_i$ is an eigenvalue \ch{of $A - \lambda B$}. In numerical computations, we can use
$\max(\|V^*x_i\|_2,\|U^*y_i\|_2)<\delta_1$, where $\delta_1$ is a prescribed threshold, as a criterion to extract the true eigenvalues
in the first phase. Note that \ch{for a simple finite eigenvalue} $x_i$ and $y_i$ are unique (up to multiplication by unit complex numbers) because, \emph{generically},
the {\em modified} pencil is regular. They correspond to eigenvectors of the original singular
pencil satisfying the orthogonality constraints $V^*x_i =0$ and $U^*y_i =0$.

In the second phase, we use the (reciprocal) eigenvalue sensitivities for extracting \ch{simple} finite eigenvalues, that is, we compute
\begin{equation}\label{eq:gamma_i}
  \gamma_i=|y_{i}^*Bx_i|(1+|\lambda_i|^2)^{-1/2}
\end{equation}
and identify $\lambda_i$ as a finite eigenvalue if $\gamma_i>\delta_2$ for a prescribed threshold $\delta_2$. Note that $1/\gamma_i$
is the absolute condition number of $\lambda_i$ as an eigenvalue of the (generically) regular pencil~\eqref{pert}; see~\eqref{eq:kappa1}.

For different matrices $U$ and $V$ in~\eqref{pert} we obtain different eigenvectors $x_i$ and $y_i$ and thus different values
of $\gamma_i$ for the same eigenvalue\ch{, while the changing of $\tau$, $D_A$ and $D_B$ does not affect the eigenvectors
\cite[Lemma 3.4]{HMP22}}. In Section~\ref{sec:analysis} we will analyze these values for random $U$ and $V$ and compare
them to the unique value $\gamma(\lambda_i)$ that appears in the $\delta$-weak condition number of $\lambda_i$ as an
eigenvalue of the singular pencil $A-\lambda B$; see Theorem~\ref{theorem:sensitvity}.

The considerations above lead to Algorithm~1 from~\cite{HMP19}.
%By scaling $A$ and $B$ appropriately, it can be assumed that
%$\|A\|_1=\|B\|_1=1$, where $\|A\|_1=\max_{1\le j\le n}\sum_{i=1}^n|a_{ij}|$.
In theory, the results returned by the algorithm are
independent of $\tau \ne 0$. In practice, $|\tau|$ should be neither too small nor too large in order to limit the impact of roundoff error;
in \cite{HMP19} it is suggested to \ch{scale $A$ and $B$ so that
$\|A\|_1=\|B\|_1=1$, where $\|A\|_1=\max_{1\le j\le n}\sum_{i=1}^n|a_{ij}|$, and}
take $\tau=10^{-2}$. The quantity $\varepsilon$ stands for the machine precision.

\noindent\vrule height 0pt depth 0.5pt width \textwidth \\
{\bf Algorithm~1: Eigenvalues of singular pencil by rank-completing modification}. \\[-3mm]
\vrule height 0pt depth 0.3pt width \textwidth \\
{\bf Input:} $A,B\in\CC^{n\times n}$ such that $\|A\|_1=\|B\|_1=1$, $k=n-\nrank(A,B)$, parameter $\tau$ (default $10^{-2}$),
    thresholds $\delta_1$ (default $\varepsilon^{1/2}$) and $\delta_2$ (default $10^{2}\,\varepsilon$).\\
{\bf Output:} \ch{Simple} finite eigenvalues of $A-\lambda B$. \\
\begin{tabular}{ll}
{\footnotesize 1:} & Select random $n\times k$ matrices $U$ and $V$ with orthonormal columns. \\
{\footnotesize 2:} & Select random diagonal $k\times k$ matrices $D_A$ and $D_B$.\\
{\footnotesize 3:} & Compute the eigenvalues $\lambda_i$, $i=1,\ldots,n$, and right and left normalized\\
    & eigenvectors $x_i$ and $y_i$ of the perturbed pencil \eqref{pert}.\\
{\footnotesize 4:} & Compute $\gamma_i=|y_i^*\ch{B}x_i|(1+|\lambda_i|^2)^{-1/2}$ for $i=1,\ldots,n$.\\
{\footnotesize 5:} & Compute $\sigma_i=\|V^*x_i\|_2$, $\tau_i=\|U^*y_i\|_2$ for $i=1,\ldots,n$.\\
{\footnotesize 6:} & Return eigenvalues $\lambda_i$, $i=1,\ldots,n$, for which ${\rm max}(\sigma_i,\tau_i)<\delta_1$ and\\
 & $\gamma_i>\delta_2$.
\end{tabular} \\
\vrule height 0pt depth 0.5pt width \textwidth
\medskip

By the theory in~\cite{HMP19} and~\cite{HMP22}, the eigenvectors of the modified pencil~\eqref{pert} that correspond to true
eigenvalues do not change if we replace $U$ and $V$  by $\widetilde U=UR$ and $\widetilde V=VS$, where $R$ and $S$ are arbitrary
nonsingular $k\times k$ matrices. Thus, choosing $U,V$ to have orthonormal columns does not violate the genericity assumption
in Summary~\ref{summry}.

\ch{Note that $\gamma_i$ in line 4 was initially computed in \cite{HMP19} as
$\gamma_i=|y_i^*\widetilde B x_i|$. This was changed to
$\gamma_i=|y_i^*\widetilde B x_i|(1+|\lambda_i|^2)^{-1/2}$ in \cite{HMP22} to be consistent with \cite{LotzNoferini} and \cite{KreGlib22}.
Since for true eigenvalues $y_i^*\widetilde B x_i=y_i^*B x_i$, we use $B$ instead of $\widetilde B$ to simplify the analysis.}

We remark that \ch{the} values $\gamma_i$ from~\eqref{eq:gamma_i} were also used in~\cite{KreGlib22} for computing
finite eigenvalues of a
singular pencil via unstructured random perturbations. The use of full-rank perturbations comes with two disadvantages: the orthogonality
relations for the eigenvectors exploited above are not satisfied and, in contrast to Algorithm~1, the eigenvalues of the perturbed
pencil in~\cite{KreGlib22} differ from the exact eigenvalues of the original pencil. The latter leads one to choose $\tau > 0$
very small, but at the same time it needs to stay well above the level of machine precision.

\ch{Let us also remark that Algorithm~1, with additional heuristic criteria,
can in practice, due to a "positive" effect of roundoff error, successfully compute multiple finite eigenvalues as well,
for details see \cite[Sec. 6]{HMP22}.}

\subsection{Normal rank projections}

In~\cite{HMP22}, a variant of Algorithm~1 was proposed that uses random projections to a pencil of smaller size, equal
to the normal rank $n-k$. In addition, the method does not require to choose the matrices $D_A$, $D_B$, and the parameter $\tau$.

For $U_\perp, V_\perp \in \mathbb C^{n \times (n-k)}$, we consider the $(n-k)\times (n-k)$ pencil $U_{\perp}^*(A-\lambda B)\,V_{\perp}$.
To connect it to the modified pencil~\eqref{pert} used in Algorithm~1, let us assume that the columns of $U_\perp$ and $V_\perp$
span the orthogonal complements of ranges of $U$ and $V$, respectively, so that $U_\perp^* U = 0$,  $V_\perp^* V = 0$. The pencil
\begin{align*}\label{coord}
  \widehat A-\lambda \widehat B&:= [U \ \, U_{\perp}]^*\,(A - \lambda B + \tau(UD_AV^* -\lambda UD_BV^*)\,[V \ \, V_{\perp}] \\
  &= \left[
  \begin{array}{cc}
  U^*(A-\lambda B)V +\tau(D_A-\lambda D_B) & U^*(A-\lambda B)V_{\perp} \\[1mm]
  U_{\perp}^*(A-\lambda B)V & U_{\perp}^*(A-\lambda B)V_{\perp}
  \end{array}
  \right]
\end{align*}
is then equivalent to \eqref{pert} and we observe the following.

\begin{proposition}[{\cite[Proposition 4.1 and Theorem 4.2]{HMP22}}] \label{prop_proj}
Let $A-\lambda B$ be a complex $n\times n$ singular pencil of normal rank $n-k$. Then, under the assumptions of
Summary~\ref{summry}, the $(n-k)\times (n-k)$ pencil $A_{22}-\lambda B_{22}:=U_{\perp}^*(A-\lambda B)\,V_{\perp}$ is
\ch{generically} regular
and the eigenvalues of $A_{22}-\lambda B_{22}$ are precisely:
\begin{enumerate}
\item[a)] the random eigenvalues of \eqref{pert} (groups 3 and 4 in Summary~\ref{summry});
\item[b)] the true eigenvalues of $A-\lambda B$.
\end{enumerate}
\end{proposition}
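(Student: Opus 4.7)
The plan is to lift the result from Summary~\ref{summry} through the block factorization displayed immediately above the proposition. Since $U$ has full column rank $k$ and $U_\perp$ spans the orthogonal complement of $\mathrm{range}(U)$, the matrix $[U\ U_\perp]$ is nonsingular; the same holds for $[V\ V_\perp]$. Consequently the $n\times n$ pencil $\widehat A-\lambda\widehat B$ is strictly equivalent to the modified pencil $\widetilde A-\lambda\widetilde B$ from~\eqref{pert}. Summary~\ref{summry} therefore applies verbatim to $\widehat A-\lambda\widehat B$, classifying its $n$ eigenvalues into the four groups of sizes $r$, $k$, $M$, $N$; the $(n-k)\times(n-k)$ block under study sits in the lower-right corner as $A_{22}-\lambda B_{22}$.

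The first step is to translate the orthogonality relations of Summary~\ref{summry} into statements about $A_{22}-\lambda B_{22}$. If $\lambda_0$ belongs to group~1 (true) or group~3 (random right), every right eigenvector $x$ of $\widetilde A-\lambda_0\widetilde B$ satisfies $V^*x=0$, hence $\hat x:=[V\ V_\perp]^*x=\begin{pmatrix} 0 \\ V_\perp^*x\end{pmatrix}$ with $V_\perp^*x\ne 0$. Reading off the lower block of $(\widehat A-\lambda_0\widehat B)\hat x=0$ yields $(A_{22}-\lambda_0 B_{22})V_\perp^*x=0$, exhibiting $\lambda_0$ as an eigenvalue of $A_{22}-\lambda B_{22}$. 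A symmetric argument, applied to left eigenvectors via $U^*y=0$ for groups~1 and~4, shows that every random left eigenvalue is also an eigenvalue of $A_{22}-\lambda B_{22}$. Thus the spectra of groups~1, 3, 4 are all contained in the spectrum of $A_{22}-\lambda B_{22}$.

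The second step is to prove generic regularity of $A_{22}-\lambda B_{22}$. Pick $\xi_0\in\CC$ for which $\rank(A-\xi_0 B)=n-k$; such $\xi_0$ exists by the definition of the normal rank. The left and right kernels of $A-\xi_0 B$ are $k$-dimensional, while $\mathrm{range}(U_\perp)$ and $\mathrm{range}(V_\perp)$ are $(n-k)$-dimensional. A dimension count shows that for generic $U_\perp,V_\perp$ the respective pairs of subspaces intersect trivially, which forces the $(n-k)\times(n-k)$ matrix $U_\perp^*(A-\xi_0 B)V_\perp$ to be nonsingular. Hence $\det(A_{22}-\lambda B_{22})\not\equiv 0$ and the pencil is regular, with exactly $n-k$ eigenvalues counted with algebraic multiplicity.

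The proof closes by a counting argument: by Summary~\ref{summry}, the total algebraic multiplicity of groups~1, 3, 4 in $\widetilde A-\lambda\widetilde B$ is $r+M+N=(n-N-M-k)+M+N=n-k$, which matches the total multiplicity in $A_{22}-\lambda B_{22}$. Since every eigenvalue from those three groups already appears in $A_{22}-\lambda B_{22}$, the inclusion must be an equality with matching multiplicities; in particular, the prescribed eigenvalues (group~2) do not appear, consistent with the fact that their eigenvectors have nonzero first block in the rotated basis. The main obstacle I anticipate is reconciling the two separate genericity conditions---one from Summary~\ref{summry} governing $(D_A,D_B,U,V)$ and one for the nonsingularity of $U_\perp^*(A-\xi_0 B)V_\perp$ above---into a single dense open condition on the parameters. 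This should follow routinely by observing that both conditions amount to the non-vanishing of finitely many polynomials in the entries of $U$ and $V$, so their intersection is still Zariski dense.
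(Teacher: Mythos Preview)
The paper does not supply its own proof of this proposition; it is quoted with attribution to \cite[Proposition~4.1 and Theorem~4.2]{HMP22} and no argument is given here. There is therefore nothing in the present paper to compare your proof against.

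For what it is worth, your reconstruction is correct and is essentially the natural route: pass to the equivalent pencil $\widehat A-\lambda\widehat B$ via the displayed block factorization, use the orthogonality relations from Summary~\ref{summry} to push eigenvectors of groups~1, 3, 4 down to the lower-right block $A_{22}-\lambda B_{22}$, establish generic regularity, and close by a dimension count. One small caution on the counting step: you show that each eigenvalue from groups~1, 3, 4 occurs in $A_{22}-\lambda B_{22}$, but matching the total of $n-k$ only pins down the spectrum as a set once you also use that, generically, the random eigenvalues are simple and distinct from each other and from the true eigenvalues; if a true eigenvalue has higher partial multiplicities you would need to carry Jordan chains (or at least full eigenspaces) through the block reduction to conclude that multiplicities are preserved. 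Your regularity argument is fine as stated: once $\mathrm{range}(V_\perp)\cap\ker(A-\xi_0 B)=\{0\}$ holds, $(A-\xi_0 B)V_\perp$ has full column rank and hence $\mathrm{range}((A-\xi_0 B)V_\perp)=\mathrm{range}(A-\xi_0 B)$, and then the condition $\mathrm{range}(U_\perp)\cap\ker((A-\xi_0 B)^*)=\{0\}$ is, after taking orthogonal complements, exactly $\mathrm{range}(U)\cap\mathrm{range}(A-\xi_0 B)=\{0\}$, which forces $U_\perp^*(A-\xi_0 B)V_\perp$ to be nonsingular. Your closing remark about merging the two genericity conditions into a single Zariski-open set is also correct.
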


Based on the above results, an algorithm is devised in \cite{HMP22}. Algorithm~2 is a simplified form that matches
Algorithm~1 as much as possible.

\noindent\vrule height 0pt depth 0.5pt width \textwidth \\
{\bf Algorithm~2: Eigenvalues of singular pencil by normal rank projection}. \\[-3mm]
\vrule height 0pt depth 0.3pt width \textwidth \\
{\bf Input and output:} See Algorithm 1.\\
\begin{tabular}{ll}
{\footnotesize 1:} & Select random unitary $n\times n$ matrices $[U \ \, U_\perp]$ and $[V \ \, V_\perp]$,
    where $U$ \\
    & and $V$ have $k$ columns.\\
{\footnotesize 2:} & Compute the eigenvalues $\lambda_i$, $i=1,\dots,n-k$, and right and left\\
  & normalized eigenvectors $x_i$ and $y_i$ of $U_\perp^*(A-\lambda B)V_\perp$.\\
{\footnotesize 3:} & Compute $\sigma_i=\|U^*(A-\lambda_iB)V_\perp x_i\|_2$, $\tau_i=\|y_i^*U_\perp^*(A-\lambda_iB)V\|_2$ for\\
 & $i=1,\dots,n-k$. \\
{\footnotesize 4:} & Compute $\gamma_i=|y_i^*U_\perp^*BV_\perp x_i|\,(1+|\lambda_i|^2)^{-1/2}$ for $i=1,\ldots, n-k$.\\
{\footnotesize 5:} & Return eigenvalues $\lambda_i$, $i=1,\ldots,n-k$, for which \\
& $\max(\sigma_i,\tau_i) < \delta_1(1+|\lambda_i|)$ and $\gamma_i>\delta_2$.
\end{tabular} \\
\vrule height 0pt depth 0.5pt width \textwidth
\medskip

\noindent The following corollary shows that the reciprocal eigenvalue condition number $\gamma_i$ computed in line 4 of
Algorithm 2 matches the corresponding quantity of Algorithm 1.
\begin{corollary}\label{cor:zveza}
Let $\lambda_i\in\CC$ be a simple eigenvalue of a singular pencil $A-\lambda B$.
Under the assumptions of Proposition~\ref{prop_proj}, if
\begin{enumerate}
    \item[a)] $(\lambda_i,x_i,y_i)$ is an eigentriple of \eqref{pert} such that
    $\|x_i\|_2=1$, $\|y_i\|_2=1$, $V^*x_i=0$, and $U^*y_i=0$; and
    \item[b)] $(\lambda_i,w_i,z_i)$ is an eigentriple of $A_{22}-\lambda B_{22}$ such that
    $\|w_i\|_2=1$, $\|z_i\|_2=1$,
    $U^*(A-\lambda_iB)V_\perp w_i=0$, and $z_i^*U_\perp^*(A-\lambda_iB)V=0$,
\end{enumerate}
then $|y_i^*Bx_i|=|z_i^*U_\perp^*BV_\perp w_i|$.
\end{corollary}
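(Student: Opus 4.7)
The plan is to use the unitary bases $[U\ U_\perp]$ and $[V\ V_\perp]$ to translate the eigentriple from (a) into an eigentriple for the projected pencil satisfying the constraints in (b), and then invoke uniqueness of such eigentriples up to a unit-modulus scalar.

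First, because $V^*x_i = 0$ and $[V\ V_\perp]$ is unitary, one has $x_i = V_\perp w_i'$ where $w_i' := V_\perp^* x_i$ satisfies $\|w_i'\|_2 = \|x_i\|_2 = 1$. In the same way, $y_i = U_\perp z_i'$ with $z_i' := U_\perp^* y_i$ of unit norm. Substituting $V^*x_i = 0$ and $U^*y_i = 0$ into $(\widetilde A - \lambda_i \widetilde B)x_i = 0$ and $y_i^*(\widetilde A - \lambda_i\widetilde B) = 0$ eliminates the rank-completing term, leaving $(A-\lambda_i B)x_i = 0$ and $y_i^*(A-\lambda_i B)=0$. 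Multiplying the first relation on the left by $[U\ U_\perp]^*$ and using the decomposition of $x_i$ yields both $(A_{22}-\lambda_iB_{22})w_i' = 0$ and $U^*(A-\lambda_iB)V_\perp w_i' = 0$; analogously, $z_i'{}^*(A_{22}-\lambda_iB_{22}) = 0$ and $z_i'{}^*U_\perp^*(A-\lambda_iB)V = 0$. Hence $(\lambda_i,w_i',z_i')$ satisfies all the conditions listed in (b).

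Next I invoke uniqueness. Since $\lambda_i$ is a simple finite eigenvalue of $A-\lambda B$, Summary~\ref{summry} guarantees that it is a simple true eigenvalue of the (generically regular) modified pencil $\widetilde A - \lambda \widetilde B$, and by Proposition~\ref{prop_proj} it is also a simple eigenvalue of the (generically regular) projected pencil $A_{22}-\lambda B_{22}$. Therefore both the eigentriples $(\lambda_i,x_i,y_i)$ of the modified pencil and $(\lambda_i,w_i,z_i)$ of the projected pencil (subject to the respective orthogonality constraints and the unit norm condition) are unique up to multiplication by unit complex numbers. It follows that $w_i' = \alpha w_i$ and $z_i' = \beta z_i$ for some $\alpha,\beta \in \CC$ with $|\alpha| = |\beta| = 1$.

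Finally, combining the pieces gives
\[
y_i^* B x_i = (U_\perp z_i')^* B (V_\perp w_i') = \overline{\beta}\,\alpha\, z_i^* U_\perp^* B V_\perp w_i,
\]
so $|y_i^* B x_i| = |z_i^* U_\perp^* B V_\perp w_i|$, as claimed. I do not anticipate a real obstacle: the only nontrivial point is justifying that the constrained eigenvectors are unique up to a unit phase, which follows once one observes that simplicity of $\lambda_i$ as an eigenvalue of $A-\lambda B$ propagates to simplicity in both the modified and projected pencils via Summary~\ref{summry} and Proposition~\ref{prop_proj}.
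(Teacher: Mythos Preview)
Your proof is correct and follows essentially the same approach as the paper's: the paper simply asserts that simplicity makes the eigenvectors in a) and b) unique up to unit-modulus scalars and that therefore $x_i=V_\perp w_i$, $y_i=U_\perp z_i$ up to such scalars, whereas you spell out explicitly how $w_i':=V_\perp^*x_i$ and $z_i':=U_\perp^*y_i$ satisfy the constraints in b) before invoking uniqueness. The extra detail you provide is sound and makes the argument self-contained.
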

\begin{proof}
Since $\lambda_i$ is simple, the vectors $x_i,y_i$ from a) and $w_i,z_i$ from b) are uniquely defined up to multiplication
by unit complex numbers. If a) and b) both hold then it immediately follows that, up to sign changes, $x_i=V_\perp w_i$ and
$y_i=U_\perp z_i$.
\end{proof}

\subsection{Augmentation} The third method, also presented in \cite{HMP22}, uses the $(n+k) \times (n+k)$ augmented (or bordered)
matrix pencil
\begin{equation} \label{augm1x}
    A_a-\lambda B_a :=
    \left[ \begin{array}{cc} A & UT_A \\ S_AV^* & 0 \end{array} \right]
    -
    \lambda \, \left[ \begin{array}{cc} B & UT_B \\ S_BV^* & 0 \end{array} \right],
\end{equation}
where $S_A,S_B,T_A$, and $T_B$ are $k\times k$ diagonal matrices and $U,V$ are $n\times k$ matrices.

\begin{proposition}[{\cite[Proposition 5.1]{HMP22} \label{prop_augm}}]
Let $A-\lambda B$ be an $n\times n$ singular pencil of normal rank $n-k$ such that all its eigenvalues are semisimple. Assume that the
diagonal $k\times k$ pencils $S_A-\lambda S_B$ and $T_A-\lambda T_B$ are regular and that their $2k$ eigenvalues are pairwise distinct.
Furthermore, let $U,V\in \CC^{n\times k}$ have orthonormal columns such that the augmented pencil \eqref{augm1x} is regular. Then the
pencil \eqref{augm1x} has the following eigenvalues:
\begin{enumerate}
\item[a)] $2k$ prescribed eigenvalues, which are precisely the eigenvalues of $S_A-\lambda S_B$ and $T_A-\lambda T_B$;
\item[b)] the random eigenvalues of \eqref{pert} (groups 3 and 4 in Summary~\ref{summry}) with the same $U$ and $V$ and with $D_A=T_AS_A$, $D_B=T_BS_B$;
\item[c)] the true eigenvalues of $A-\lambda B$.
\end{enumerate}
\end{proposition}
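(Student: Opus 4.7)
The plan is to verify Proposition~\ref{prop_augm} by exhibiting, for each of the $r+M+N+2k=n+k$ eigenvalues in the claimed list, an explicit right or left eigenvector of $A_a-\lambda B_a$, and then closing the argument by a count against the eigenvalue total of this regular pencil of size $n+k$. The starting point is the block identity
\[
(A_a-\lambda B_a)\begin{bmatrix} x\\ z\end{bmatrix}
= \begin{bmatrix} (A-\lambda B)x+U(T_A-\lambda T_B)z\\ (S_A-\lambda S_B)V^*x\end{bmatrix},
\]
which makes transparent when the natural block ans\"atze $[x;\,0]$ and $[0;\,e_j]$ fall into the kernel. Throughout, I would rely on Summary~\ref{summry} applied to the modified pencil $\widetilde A-\lambda\widetilde B=(A-\lambda B)+U(D_A-\lambda D_B)V^*$, on the regularity of $A_a-\lambda B_a$ assumed in the statement, and on generic position of $U$ and $V$.

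For the prescribed family, diagonality of $T_A,T_B$ gives $(T_A-\mu_j T_B)e_j=0$ at $\mu_j=(T_A)_{jj}/(T_B)_{jj}$, so $[0;\,e_j]$ lies in the right kernel of $A_a-\mu_j B_a$; the mirror left-side construction at each eigenvalue of $S_A-\lambda S_B$ yields the remaining $k$ prescribed eigenvalues, and the pairwise-distinctness hypothesis packages them into $2k$ distinct eigenvalues. For the true and random contributions I would take $D_A=T_AS_A$ and $D_B=T_BS_B$ and appeal to Summary~\ref{summry}: right eigenvectors $x$ belonging to groups~1 (true) and~3 (random-right) satisfy $V^*x=0$, which collapses $(\widetilde A-\lambda\widetilde B)x=0$ to $(A-\lambda B)x=0$, whence both blocks of $(A_a-\lambda B_a)[x;\,0]$ vanish. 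Dually, left eigenvectors $y$ in group~4 (random-left) satisfy $U^*y=0$ and produce the left eigenvector $[y;\,0]$ of $A_a-\lambda B_a$; true eigenvalues from group~1 are captured by both constructions.

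Summing gives $r$ true, $M$ random-right, $N$ random-left, and $2k$ prescribed eigenvalues, exactly $n+k$ in total, matching the degree of the characteristic polynomial of the regular pencil $A_a-\lambda B_a$. The hard part will be reconciling algebraic with merely geometric multiplicities at the true eigenvalues, and this is precisely where the semisimplicity hypothesis on $A-\lambda B$ enters: for a true eigenvalue $\lambda_0$ of algebraic multiplicity $a$, $\dim\ker(A-\lambda_0 B)=a+k$, and generic position of $V$ forces $\dim(\ker(A-\lambda_0 B)\cap\ker V^*)=a$, yielding $a$ linearly independent right eigenvectors $[x;\,0]$ of $A_a-\lambda_0 B_a$. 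Combined with the simplicity of the random eigenvalues (Summary~\ref{summry}) and distinctness of the $2k$ prescribed ones, this produces $n+k$ linearly independent eigenvectors of the regular pencil $A_a-\lambda B_a$, certifying that the listed spectrum is exhaustive with the correct multiplicities. A more systematic but bookkeeping-heavy alternative would be to reduce $A-\lambda B$ to Kronecker canonical form and verify the claim block-by-block on the regular part $R(\lambda)$ and on each pair of singular blocks $L_{m_i},L_{n_i}^T$.
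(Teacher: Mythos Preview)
The paper does not prove Proposition~\ref{prop_augm}; it merely quotes it from \cite[Proposition~5.1]{HMP22} and then proceeds to Algorithm~3 and Corollary~\ref{cor:zveza_augm}. So there is no ``paper's own proof'' to compare against.

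Your argument is essentially sound, but one point is phrased imprecisely. You write that the construction ``produces $n+k$ linearly independent eigenvectors,'' yet for the random-left eigenvalues and for the eigenvalues of $S_A-\lambda S_B$ you have produced \emph{left} eigenvectors, while for the rest you have right eigenvectors. Left and right eigenvectors cannot be pooled into a single linearly independent set. The clean version of the count is: under the genericity implicit in the reference to Summary~\ref{summry} (which makes the four groups disjoint and the random eigenvalues simple), the eigenvalues you have exhibited are pairwise distinct apart from multiplicities within the true group; at each one you have shown the geometric multiplicity of $A_a-\lambda B_a$ is at least the claimed number (for the true eigenvalues this uses semisimplicity and your observation that $\dim\big(\ker(A-\lambda_0 B)\cap\ker V^*\big)=a$); since the algebraic multiplicities of a regular $(n+k)\times(n+k)$ pencil sum to $n+k$ and geometric $\le$ algebraic, equality holds everywhere and no further eigenvalues exist. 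A minor additional remark: the specific choice $D_A=T_AS_A$, $D_B=T_BS_B$ in part~b) plays no role in your argument, because any eigenpair $(\mu,x)$ of~\eqref{pert} with $V^*x=0$ already satisfies $(A-\mu B)x=0$ regardless of $D_A,D_B$; the choice in the statement is only a convenient way of labelling the random eigenvalues via Summary~\ref{summry}.
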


The algorithm based on the above proposition is given in Algorithm~3.

\noindent\vrule height 0pt depth 0.5pt width \textwidth \\
{\bf Algorithm~3: Eigenvalues of singular pencil by augmentation}. \\[-3mm]
\vrule height 0pt depth 0.3pt width \textwidth \\
{\bf Input and output:} See Algorithm~1.\\
\begin{tabular}{ll}
{\footnotesize 1:} & Select random $n\times k$ matrices $U$ and $V$ with orthonormal columns.\\
{\footnotesize 2:} & Select random diagonal $k \times k$ matrices $T_A$, $T_B$, $S_A$, and $S_B$\\
{\footnotesize 3:} & Compute the eigenvalues $\lambda_i$, $i=1,\dots,n+k$, and normalized right and\\
  & left eigenvectors
    $[x_{i1}^T\ x_{i2}^T]^T$ and $[y_{i1}^T\ y_{i2}^T]^T$ of the augmented pencil \eqref{augm1x}.\\
{\footnotesize 4:} & Compute $\sigma_i=\|x_{i2}\|_2$, \ $\tau_i=\|y_{i2}\|_2$, \ $i=1,\dots,n+k$. \\
{\footnotesize 5:} & Compute $\gamma_i=|y_{i1}^*Bx_{i1}| \, (1+|\lambda_i|^2)^{-1/2}$, \ $i=1,\ldots, n+k$.\\
{\footnotesize 6:} & Return all eigenvalues $\lambda_i$, $i=1,\ldots,n+k$, where $\max(\sigma_i,\tau_i) < \delta_1$ and\\
& $\gamma_i>\delta_2$.
\end{tabular} \\
\vrule height 0pt depth 0.5pt width \textwidth
\medskip

\noindent Again, the reciprocal eigenvalue condition number $\gamma_i$ computed in line \ch{5} of Algorithm 3 matches the
corresponding quantity of Algorithm 1.
\begin{corollary}\label{cor:zveza_augm} Under the assumptions of Proposition~\ref{prop_augm}, if
\begin{enumerate}
    \item[a)] $(\lambda_i,x_i,y_i)$ is an eigentriple of \eqref{pert} such that
    $\|x_i\|_2=1$, $\|y_i\|_2=1$, $V^*x_i=0$, and $U^*y_i=0$; and
    \item[b)] $(\lambda_i,w_i,z_i)$, where
    $w_i=[w_{i1}^T\ 0]^T$ and $z_i=[z_{i1}^T\ 0]^T$, such that $w_{i1}\in\CC^n$, $z_{i1}\in\CC^n$,
    $\|w_i\|_2=1$, $\|z_i\|_2=1$, is an eigentriple of the augmented pencil \eqref{augm1x},
\end{enumerate}
then $|y_i^*Bx_i|=|z_i^*B_aw_i|=|z_{i1}^*Bw_{i1}|$.
\end{corollary}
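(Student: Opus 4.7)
The plan is to reduce part (b) to part (a) by exploiting the block structure of $A_a-\lambda B_a$ together with the uniqueness of the relevant eigenvectors at a simple true eigenvalue. First I would expand $(A_a-\lambda_i B_a)w_i=0$ with $w_i=[w_{i1}^T\ 0]^T$ in block form: the upper block yields $(A-\lambda_i B)w_{i1}=0$, and the lower block yields $(S_A-\lambda_i S_B)V^*w_{i1}=0$. By Proposition~\ref{prop_augm}, the prescribed eigenvalues coming from $S_A-\lambda S_B$ are disjoint from the true eigenvalues of $A-\lambda B$, so the diagonal matrix $S_A-\lambda_i S_B$ is invertible at the true eigenvalue $\lambda_i$; this forces $V^*w_{i1}=0$. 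An analogous block analysis of $z_i^*(A_a-\lambda_i B_a)=0$ with $z_i=[z_{i1}^T\ 0]^T$, using the invertibility of $T_A-\lambda_i T_B$, gives $z_{i1}^*(A-\lambda_i B)=0$ and $U^*z_{i1}=0$. Moreover $\|w_{i1}\|_2=\|z_{i1}\|_2=1$ because the second blocks vanish.

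At this point, $w_{i1}$ and $z_{i1}$ satisfy exactly the conditions characterizing $x_i$ and $y_i$ in part (a), namely unit-norm right/left eigenvectors of the modified pencil~\eqref{pert} at $\lambda_i$ orthogonal to $V$ and $U$ respectively. Invoking the same uniqueness argument as in Corollary~\ref{cor:zveza} for a simple true eigenvalue, I would conclude that there exist unit complex scalars $\alpha,\beta$ with $w_{i1}=\bar\alpha\,x_i$ and $z_{i1}=\bar\beta\,y_i$. To finish, I would compute $z_i^*B_aw_i$ by direct block multiplication: only the upper-left block of $B_a$ contributes, so $z_i^*B_aw_i=z_{i1}^*Bw_{i1}=\beta\bar\alpha\,y_i^*Bx_i$. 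Taking moduli, $|\beta\bar\alpha|=1$ yields both equalities $|y_i^*Bx_i|=|z_{i1}^*Bw_{i1}|=|z_i^*B_aw_i|$ simultaneously.

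The main subtlety lies in the invertibility step: one must justify that neither $S_A-\lambda_i S_B$ nor $T_A-\lambda_i T_B$ degenerates at a true eigenvalue $\lambda_i$. This is not automatic but follows from the spectral classification in Proposition~\ref{prop_augm}, which partitions the $n+k$ eigenvalues of the augmented pencil into $2k$ pairwise distinct prescribed eigenvalues and the remaining true and random eigenvalues; any coincidence between a prescribed and a true eigenvalue would contradict this clean partition under the stated genericity hypotheses. Once this is in place, the rest is a routine block-matrix computation paralleling Corollary~\ref{cor:zveza}.
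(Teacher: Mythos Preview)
Your proposal is correct and follows the same approach as the paper: both arguments identify $w_{i1}$ with $x_i$ and $z_{i1}$ with $y_i$ up to unimodular scalars, then read off the equalities from a block computation. The paper's proof is extremely terse (it simply asserts that ``up to sign changes, $x_i=w_{i1}$ and $y_i=z_{i1}$''), whereas you spell out why the block structure of $A_a-\lambda_i B_a$ applied to $w_i=[w_{i1}^T\ 0]^T$ forces $(A-\lambda_i B)w_{i1}=0$ and $V^*w_{i1}=0$, and you correctly flag the need for invertibility of $S_A-\lambda_i S_B$ and $T_A-\lambda_i T_B$ at a true eigenvalue; this is exactly the detail the paper leaves implicit.
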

\begin{proof}
If a) and b) are both true then it immediately follows that, up to sign changes,
$x_i=w_{i1}$ and $y_i=z_{i1}$.\qed
\end{proof}

\section{Probabilistic analysis}\label{sec:analysis}

Our goal is to analyze the behavior of the quantities $\gamma_i$ in Algorithms 1--3 and show that they are unlikely to be
much below $\gamma(\lambda_i)$. It follows from Corollaries~\ref{cor:zveza} and \ref{cor:zveza_augm} that it is sufficient
to consider Algorithm~1 and the quantity $\gamma_i$ defined in~\eqref{eq:gamma_i}.

In the following, we assume that $\lambda_i$ is a simple eigenvalue of $A-\lambda B$ and let $X=[X_1\ x]$ and $Y=[Y_1\ y]$
denote the orthonormal bases for ${\rm ker}(A-\lambda_i B)$ and ${\rm ker}((A-\lambda_i B)^*)$ introduced in Theorem
\ref{thm:DeTeranDopico}. We recall from Theorem~\ref{theorem:sensitvity} that the reciprocal of
$\gamma(\lambda_i) = |y^* B x| (1+|\lambda_i|^2)^{-1/2}$ critically determines the sensitivity of $\lambda_i$ as an eigenvalue of
$A-\lambda B$. The sensitivity of $\lambda_i$ as an eigenvalue of
$\widetilde A-\lambda \widetilde B$ from~\eqref{pert} is given by $1/\gamma_i$ with
$\gamma_i = |y_i^* B x_i| (1+|\lambda_i|^2)^{-1/2}$; see~\eqref{eq:gamma_i}.
The eigenvectors $x_i, y_i$ are normalized ($\|x_i\|_2=\|y_i\|_2=1$) and depend on the choices of $U$ and $V$ in Algorithm~1.
Generically (with respect to $D_A,D_B,U,V^*$), Summary~\ref{summry}.1 yields the relations
\begin{equation}\label{eq:alfa_beta}
  x_i=[X_1\ x]\left[\begin{matrix} a \cr \alpha\end{matrix}\right]\quad {\rm and}\quad
  y_i=[Y_1\ y]\left[\begin{matrix} b \cr \beta\end{matrix}\right],\quad V^*x_i=0, \quad U^*y_i=0.
\end{equation}
If $V^* X_1$ is invertible, the choice of $V$ entirely determines $|\alpha|$ because
$V^* x_i = 0$ implies $a = -(V^* X_1)^{-1} V^* x \alpha$ and, hence, $\|x_i\|_2 = 1$ implies
\[|\alpha| = 1/\sqrt{1 + \|(V^* X_1)^{-1} V^* x\|_2^2}.\]
Analogously, the choice of $U$ determines $|\beta|$ if $U^* Y_1$ is invertible.

Since $Y^*BX_1=0$ and $Y_1^*BX=0$, we get $y_i^*Bx_i = \alpha \ch{\overline \beta} y^*Bx$ and thus
\[
  \gamma_i=|\alpha| |\beta| \gamma(\lambda_i).
\]
The relation $0\le |\alpha|,|\beta|\le 1$ immediately gives $\gamma_i\le \gamma(\lambda_i)$, in line with~\eqref{eq:ocena_gamma}.
A small value of $|\alpha||\beta|$ means an increased eigenvalue sensitivity for $\widetilde A-\lambda \widetilde B$, potentially
causing Algorithm~1 to yield unnecessarily inaccurate results. In the following, we will show that this is unlikely when random
matrices $U,V$ are used in Algorithm~1. This also implies that the (reciprocal) condition numbers computed in Algorithm~1 can
be used with high probability to correctly identify finite simple eigenvalues.

\subsection{Preliminary results}

Let ${\cal N}^1(\mu,\ch{\sigma^2})$ denote the normal distribution with mean $\mu$ and \ch{variance $\sigma^2$}. In particular,
$x\sim {\cal N}^1(0,1)$ is a standard (real) normal random variable. We write $z\sim {\cal N}^2(0,1)$ if $z=x+\mathrm{i} y$
is a standard complex normal variable, that is, $x,y\sim {\cal N}^1(0,\frac{1}{2})$ are independent.
In the following, we will analyze real matrices ($\mathbb F=\RR$) and complex matrices ($\mathbb F=\CC$) simultaneously. For this purpose,
we set $\phi=1$ for $\mathbb F=\RR$ and $\phi=2$ for $\mathbb F=\CC$.

The matrices $U$ and $V$ from Algorithm~1 belong to \emph{the Stiefel manifold}
\[\mathbb V_k^n(\mathbb F)=\{Q\in\mathbb F^{n\times k}:\ Q^*Q=I\}.\]
We will choose them randomly (and independently) from the uniform distribution on $\mathbb V_k^n(\mathbb F)$.
A common way to compute such a matrix is to perform the QR decomposition of an $n\times k$
\emph{Gaussian random matrix} $M$, that is, the entries of $M$ are i.i.d.~real or complex standard normal variables,
see e.g., \cite{Mezzadri,Stewart}. That this indeed yields the uniform distribution follows from
the following variant of the well-known Bartlett decomposition theorem (\cite[Theorem 3.2.14]{Muirhead}, \cite[Proposition 7.2]{Eaton}); see also \cite[Proposition 4.5]{LotzNoferini}.

\begin{theorem}\label{thm:bartlett}
For $\mathbb F\in\{\RR,\CC\}$, let $M\in\mathbb F^{n\times k}$, $n \ge k$, be a Gaussian random matrix. Consider the QR decomposition $M=QR$,
  where $Q\in\mathbb V_k^n(\mathbb F)$ and $R\in\mathbb F^{k\times k}$ is upper triangular with non-negative diagonal entries. Then
  \begin{enumerate}
      \item[a)] the entries of $Q$ and the entries of the upper triangular part of $R$ are all independent random variables;
      \item[b)] $Q$ is distributed uniformly over $\mathbb V_k^n(\mathbb F)$;
      \item[c)] $r_{ij}\sim {\cal N}^\phi(0,1)$ for $1\le i< j\le k$;
      \item[d)] $\phi \ch{r_{jj}^2}\sim\chi^2(\phi(n-j+1))$ for $j=1,\ldots,k$;
    \end{enumerate}
  where $\chi^2(\ell)$ denotes the chi-squared distribution with $\ell$ degrees of freedom.
\end{theorem}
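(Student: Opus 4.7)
The plan is to leverage orthogonal (respectively unitary) invariance of the Gaussian distribution of $M$ and combine it with an explicit column-by-column analysis of Gram-Schmidt.

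First I would observe that, since the joint density of the entries of $M$ depends only on $\mathrm{tr}(M^*M)$, the distribution of $M$ is invariant under left multiplication by any $W\in\mathbb V_n^n(\mathbb F)$. Writing $M=QR$, one has $WM=(WQ)R$, and since $R$ is uniquely determined by $M$ via the Cholesky-type factor of the Gram matrix $M^*M=R^*R$, the pair $(WQ,R)$ has the same joint distribution as $(Q,R)$. Conditioning on $W$ taken (independently of $M$) uniformly on $\mathbb V_n^n(\mathbb F)$ forces $Q$ to be uniformly distributed on $\mathbb V_k^n(\mathbb F)$ and independent of $R$; this immediately yields item (b) and the independence of $Q$ from $R$ required by item (a).

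Next, to determine the joint distribution of the entries of $R$, I would run Gram-Schmidt column by column, conditioning on what has been built so far. At step $j$, given $q_1,\ldots,q_{j-1}$, complete them (measurably) to an orthonormal basis $\{q_1,\ldots,q_{j-1},\tilde q_j,\ldots,\tilde q_n\}$ of $\mathbb F^n$. By invariance of the i.i.d.\ Gaussian distribution of the $j$-th column $m_j$ under an orthonormal change of basis, the coordinates of $m_j$ in this basis are again i.i.d.\ of the same type (${\cal N}^1(0,1)$ for $\mathbb F=\RR$, ${\cal N}^2(0,1)$ for $\mathbb F=\CC$), and independent of $q_1,\ldots,q_{j-1}$. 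The first $j-1$ of these coordinates are precisely $r_{1j},\ldots,r_{j-1,j}$, which yields item (c) together with their independence from one another and from the earlier steps. The remaining $n-j+1$ coordinates have squared Euclidean norm equal to $r_{jj}^2$; for $\mathbb F=\RR$ this is by definition a $\chi^2(n-j+1)$ variable, while for $\mathbb F=\CC$ the identity $2|z|^2\sim\chi^2(2)$ for $z\sim{\cal N}^2(0,1)$ gives $2r_{jj}^2\sim\chi^2(2(n-j+1))$, both cases summarized by item (d). The inductive repetition of this conditioning argument then supplies the mutual independence of all upper-triangular entries of $R$ and closes item (a).

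The main obstacle is essentially bookkeeping: unifying the real and complex cases through the parameter $\phi$, and justifying the change-of-basis step at stage $j$ conditionally on $m_1,\ldots,m_{j-1}$. The latter is handled by a measurable selection of the orthonormal completion $\tilde q_j,\ldots,\tilde q_n$, or equivalently by applying invariance pointwise and integrating with respect to the conditional law; no probabilistic tool beyond invariance of Gaussian vectors under orthonormal transformations and independence of Gram-Schmidt residuals is needed, and the result is classical.
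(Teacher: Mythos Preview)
The paper does not actually supply a proof of this theorem; it is stated as a known variant of the Bartlett decomposition theorem, with citations to Muirhead, Eaton, and Lotz--Noferini. Your argument is the standard and correct one: left orthogonal/unitary invariance of the Gaussian ensemble gives the uniform law of $Q$ on the Stiefel manifold together with the independence of $Q$ from $R$, and the column-by-column Gram--Schmidt conditioning argument yields the marginal distributions and mutual independence of the upper-triangular entries of $R$.

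One minor clarification worth flagging: item (a), read literally, would assert that the individual entries of $Q$ are mutually independent, which is of course false since $Q^*Q=I$. The intended meaning, and what your argument establishes, is that $Q$ (as a matrix-valued random variable) is independent of $R$, and that the upper-triangular entries of $R$ are mutually independent. You have handled this correctly by not attempting to prove the impossible literal reading.
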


Part \ch{b)} of Theorem~\ref{thm:bartlett} implies that each column of $Q$ is distributed uniformly over the unit sphere in $\mathbb F^n$.
Note that $x = z / \|z\|_2$, for a Gaussian random vector $z \in\mathbb F^n$, has the same distribution.
The following result provides the distribution of the entries of $x$; this result can be found for $\mathbb F=\mathbb R$ in~\cite{Dixon}.

\begin{lemma}\label{lem:unif_element_dist}
Consider a random vector $x$ distributed uniformly over the unit sphere in $\mathbb F^n$ for $n\ge 2$. Then the entries of $x$ are i.i.d.~with
\[
  |x_i|^2\sim \mathrm{Beta}\left(\frac{\phi}{2},\frac{\phi (n-1)}{2} \right), \quad i=1,\ldots,n,
\]
where $\mathrm{Beta}$ denotes the beta distribution.
\end{lemma}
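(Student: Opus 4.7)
The plan is to use the Gaussian representation of the uniform distribution on the sphere, as indicated right before the lemma: if $z\in\mathbb F^n$ has i.i.d.\ entries $z_j\sim {\cal N}^\phi(0,1)$, then $x:=z/\|z\|_2$ is uniformly distributed on the unit sphere in $\mathbb F^n$. (This follows from Theorem~\ref{thm:bartlett}\,b) applied to $k=1$, or directly from the rotational invariance of the standard Gaussian.) Because the joint distribution of $(z_1,\ldots,z_n)$ is permutation symmetric, so is that of $(|x_1|^2,\ldots,|x_n|^2)$; hence all $|x_i|^2$ share the same marginal distribution and it suffices to compute the distribution of $|x_1|^2$.

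Writing
\[
  |x_1|^2=\frac{|z_1|^2}{|z_1|^2+\sum_{j=2}^n |z_j|^2},
\]
I would identify $\phi|z_1|^2$ and $\phi\sum_{j=2}^n|z_j|^2$ as independent chi-squared random variables. In the real case $\phi=1$ this is immediate: $z_j^2\sim\chi^2(1)$ and $\sum_{j=2}^n z_j^2\sim\chi^2(n-1)$. In the complex case $\phi=2$, write $z_j=a_j+\mathrm{i}\,b_j$ with $a_j,b_j\sim {\cal N}^1(0,\tfrac12)$ independent; then $2|z_j|^2=2a_j^2+2b_j^2\sim\chi^2(2)$ and $2\sum_{j=2}^n|z_j|^2\sim\chi^2(2(n-1))$. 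Both cases are summarized by $\phi|z_1|^2\sim\chi^2(\phi)$ and $\phi\sum_{j=2}^n|z_j|^2\sim\chi^2(\phi(n-1))$, and these two variables are independent because the $z_j$ are.

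The conclusion follows from the standard identity that, for independent $X\sim\chi^2(a)$ and $Y\sim\chi^2(b)$, one has $X/(X+Y)\sim\mathrm{Beta}(a/2,b/2)$. Multiplying numerator and denominator of the expression for $|x_1|^2$ by $\phi$ yields $|x_1|^2\sim\mathrm{Beta}(\phi/2,\phi(n-1)/2)$, as claimed. The argument is a routine reduction with no significant obstacle; the only point worth flagging is that ``i.i.d.'' in the statement should be read as ``identically distributed,'' since the constraint $\sum_j|x_j|^2=1$ precludes joint independence of the entries.
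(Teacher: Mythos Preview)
Your argument is essentially the same as the paper's: represent $x=z/\|z\|_2$ for a standard Gaussian $z$, write $|x_1|^2=|z_1|^2/(|z_1|^2+\|w\|_2^2)$ with $w=(z_2,\ldots,z_n)$, identify $\phi|z_1|^2\sim\chi^2(\phi)$ and $\phi\|w\|_2^2\sim\chi^2(\phi(n-1))$, and conclude via the chi-squared ratio characterization of the Beta distribution. The paper cites Eaton for this last step rather than spelling it out, but the route is identical.

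The one genuine difference is your final remark. The paper asserts that the entries of $x$ are independent, invoking Theorem~\ref{thm:bartlett}\,a). You instead obtain identical distribution from permutation symmetry and note that the constraint $\sum_j|x_j|^2=1$ rules out independence. You are right: the entries of a uniform unit vector are not jointly independent, and the paper's appeal to Bartlett for this point is mistaken (the Bartlett decomposition gives independence of $Q$ from $R$ and mutual independence of the entries of $R$, but not mutual independence of the entries of $Q$). So your version is the correct reading of the statement.
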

\begin{proof}
By Theorem \ref{thm:bartlett}, the entries of $x$ are independent. Without loss of generality, let $i = 1$. Using that $x=z/\|z\|_2$
for a Gaussian random vector $z$ and setting $w = \left[\begin{matrix}z_2 & \ldots & z_n\end{matrix}\right]$, it follows that
$|x_1|^2=\frac{|z_1|^2}{|z_1|^2+\|w\|_2^2}$, where $z_1$, $w$ are independent and $\phi |z_1|^2\sim \chi^2(\phi)$,
$\phi \|w\|_2^2\sim \chi^2(\phi(n-1))$. This implies the claimed result; see, e.g., \cite[p. 320]{Eaton}.
\end{proof}

Our analysis will connect $\alpha$ and $\beta$ from \eqref{eq:alfa_beta} to the nullspaces of $k\times (k+1)$ standard Gaussian matrices,
which are characterized by the following result.
\begin{lemma} \label{lemma:prelim}
For a Gaussian random matrix $\Omega\in\mathbb F^{k\times (k+1)}$ with $k \ge 2$, let $x$ be a vector in the nullspace of
$\Omega$ such that $\|x\|_2 = 1$. Then, with probability one, $|x_i|$ is uniquely determined and satisfies
\begin{equation}\label{eq:uniform_beta}
  |x_i|^2 \sim \mathrm{Beta}\left(\frac{\phi}{2},\frac{\phi k}{2} \right).
\end{equation}
\end{lemma}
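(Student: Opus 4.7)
My plan is to reduce the claim to Lemma~\ref{lem:unif_element_dist} by showing that the (norm-normalized) nullspace vector $x$ is uniformly distributed over the unit sphere in $\mathbb F^{k+1}$, and then apply that lemma with $n$ replaced by $k+1$. This is attractive because $\mathrm{Beta}(\phi/2, \phi k/2)$ is precisely what Lemma~\ref{lem:unif_element_dist} yields for the squared modulus of a component of a uniformly distributed unit vector in dimension $k+1$.

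The first step is uniqueness. Since $\Omega \in \mathbb F^{k\times(k+1)}$ is Gaussian, it has full row rank $k$ with probability one; consequently the nullspace of $\Omega$ is almost surely one-dimensional, so $x$ is determined up to multiplication by a unit scalar in $\mathbb F$, which leaves $|x_i|$ uniquely defined.

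The main step is the uniformity claim. Here I would exploit the unitary (respectively orthogonal) invariance of the Gaussian law: for any fixed $Q\in\mathbb V_{k+1}^{k+1}(\mathbb F)$, the matrix $\Omega Q$ has the same distribution as $\Omega$, because the entries of $\Omega$ are i.i.d.\ standard normal. If $x$ spans $\ker(\Omega)$, then $Q^*x$ spans $\ker(\Omega Q)$. Hence the distribution of the random line $\mathrm{span}(x)$ in $\mathbb F^{k+1}$ is invariant under the action of the orthogonal/unitary group, which (as the only such invariant probability distribution on the projective space) forces $x/\|x\|_2$ to be uniformly distributed on the unit sphere in $\mathbb F^{k+1}$ modulo the phase ambiguity. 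This ambiguity is irrelevant here because only $|x_i|$ enters the statement. Applying Lemma~\ref{lem:unif_element_dist} with dimension $k+1$ then yields $|x_i|^2\sim \mathrm{Beta}(\phi/2, \phi k/2)$, as desired.

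I expect the genuine technical point to be the invariance/uniformity argument: one must be a little careful that ``picking a unit vector in the kernel'' is a well-defined measurable operation and that the pushforward of the orthogonal/unitary group action through $\ker(\cdot)$ agrees with left multiplication by $Q^*$. A clean alternative, in case one prefers an explicit calculation, is to note that by the unitary invariance of $\Omega$ one may condition on $\Omega = [R\ \ 0]$ with $R$ upper triangular and nonsingular, read off the nullspace vector $x$ in closed form, and then re-apply the invariance to transport the result to general $\Omega$; but the symmetry argument above avoids this detour and is the cleaner route.
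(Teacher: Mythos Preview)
Your proposal is correct and reaches the same conclusion as the paper, but the route to establishing uniformity of $x$ on the unit sphere is genuinely different. The paper argues constructively: it appends an independent Gaussian column $\omega\in\mathbb F^{k+1}$ to $\Omega^*$, performs the QR decomposition $[\Omega^*,\omega]=QR$, and takes $x$ to be the last column of $Q$; this vector is orthogonal to the columns of $\Omega^*$ (hence lies in $\ker\Omega$) and is uniformly distributed on the sphere by the Bartlett decomposition (Theorem~\ref{thm:bartlett}). Your argument is instead a symmetry argument: invariance of the Gaussian law under right multiplication by a fixed $Q\in\mathbb V_{k+1}^{k+1}(\mathbb F)$ forces the distribution of the random line $\ker\Omega$ to be invariant under the full orthogonal/unitary group, and hence equal to the unique invariant (uniform) distribution on projective space. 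Both then finish with Lemma~\ref{lem:unif_element_dist}. The paper's approach has the advantage of staying entirely within tools already set up (Theorem~\ref{thm:bartlett}), avoiding any appeal to uniqueness of Haar measure on homogeneous spaces and the measurable-selection issue you flag; your approach is more conceptual and makes clear that the result depends only on the rotational invariance of the row distribution of $\Omega$, not on Gaussianity per se.
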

\begin{proof}
We assume that $\Omega$ has rank $k$, which holds with probability $1$. For a Gaussian random vector $\omega \in \mathbb F^{k+1}$
independent of \ch{$\Omega^*$}, consider the QR decomposition $[\ch{\Omega^*}, \omega] = QR$. Letting $x$ denote the last column of $Q$, it
follows that $x$ is orthogonal to the columns of $\ch{\Omega^*}$ or, in other words, $x$ is in the nullspace of $\Omega$. From
Theorem~\ref{thm:bartlett}, it follows that $x$ is distributed uniformly over the unit sphere in $\mathbb F^{k+1}$. The distribution~\eqref{eq:uniform_beta}
then follows from Lemma \ref{lem:unif_element_dist}. Finally, note that $|x_i|^2$ is uniquely determined because the nullspace of $\Omega$ has dimension $1$.
\end{proof}

\subsection{Statistics of $|\alpha|, |\beta|$}

The results above readily yield the distribution of $|\alpha|^2$ and $|\beta|^2$.
\begin{proposition}\label{prop:main}
For $\mathbb F \in \{\mathbb R, \mathbb C\}$, let $U,V$ be $n\times k$ independent random
matrices from the uniform distribution on the Stiefel manifold $\mathbb V_k^n(\mathbb F)$. Consider a \ch{finite}
simple eigenvalue $\lambda_i \in \mathbb F$ of a singular pencil $A-\lambda B$ with $A,B\in{\mathbb F}^{n\times n}$.
Let $x_i$ and $y_i$ be the right and left normalized eigenvectors of the perturbed regular pencil \eqref{pert} for
the eigenvalue $\lambda_i$ and let $\alpha, \beta$ be defined as in \eqref{eq:alfa_beta}.
Then $|\alpha|$ and $|\beta|$ are independent random variables and
\[
  |\alpha|^2, |\beta|^2 \sim \mathrm{Beta}\left(\frac{\phi}{2},\frac{\phi k}{2} \right),
\]
where $\phi=1$ for $\mathbb F = \RR$ and $\phi=2$ for $\mathbb F=\CC$.
\end{proposition}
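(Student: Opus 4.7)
The plan is to express $|\alpha|$ and $|\beta|$ as absolute values of a single coordinate of a unit vector in the nullspace of a $k\times(k+1)$ matrix, and then to reduce that matrix to standard Gaussian form so that Lemma~\ref{lemma:prelim} applies directly. Independence is immediate: the discussion following~\eqref{eq:alfa_beta} shows that $|\alpha|$ is a (measurable) function of $V$ alone and $|\beta|$ is a function of $U$ alone, and $U,V$ are independent by assumption.

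For the distribution of $|\alpha|^2$, set $W:=[X_1\ x]\in\mathbb V_{k+1}^n(\mathbb F)$ and write $x_i=Wc$ with $c=\left[\begin{matrix} a \cr \alpha\end{matrix}\right]\in\mathbb F^{k+1}$. Orthonormality of the columns of $W$ gives $\|c\|_2=\|x_i\|_2=1$, while $V^*x_i=0$ becomes $c\in\ker(V^*W)$, where $V^*W\in\mathbb F^{k\times(k+1)}$. Hence $|\alpha|$ is the modulus of the last entry of a unit vector in $\ker(V^*W)$. Now realize $V$ via the QR decomposition $M=VR$ of a Gaussian $M\in\mathbb F^{n\times k}$ as in Theorem~\ref{thm:bartlett}; almost surely the diagonal entries of $R$ are nonzero, so $R$ is invertible and $V^*W=R^{-*}M^*W$ has the same nullspace as $M^*W$. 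Since $W^*W=I_{k+1}$, each column of $W^*M$ is $W^*m$ for an independent standard Gaussian $m\in\mathbb F^n$, which is a standard Gaussian vector in $\mathbb F^{k+1}$; equivalently, $M^*W$ is itself a $k\times(k+1)$ Gaussian random matrix. Lemma~\ref{lemma:prelim} then gives $|\alpha|^2\sim\mathrm{Beta}(\phi/2,\phi k/2)$. The identical argument, with $U$ in place of $V$ and $[Y_1\ y]$ in place of $[X_1\ x]$, yields the same distribution for $|\beta|^2$.

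The main obstacle is the reduction in the second paragraph, namely that (i) replacing $V$ by the underlying Gaussian $M$ preserves the nullspace of interest, and (ii) $M^*W$ is itself Gaussian when $W$ has orthonormal columns. The first point reduces to the almost-sure invertibility of $R$, which follows from Theorem~\ref{thm:bartlett}; the second is a straightforward consequence of rotation invariance of the standard Gaussian distribution together with $W^*W=I$. Once these points are granted, the proof is a direct application of Lemma~\ref{lemma:prelim}, and no further computation is needed.
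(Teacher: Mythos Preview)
Your proof is correct and follows essentially the same approach as the paper: both realize $V$ as the $Q$-factor of a Gaussian matrix, reduce the constraint $V^*x_i=0$ to a nullspace condition for a $k\times(k+1)$ Gaussian matrix, and invoke Lemma~\ref{lemma:prelim}. The only cosmetic difference is that the paper first uses unitary invariance of the Stiefel distribution to rotate $[X_1\ x]$ to $[e_1\ \cdots\ e_{k+1}]$ and then extracts a submatrix of the Gaussian, whereas you keep $W=[X_1\ x]$ general and appeal directly to rotation invariance of the Gaussian to conclude that $M^*W$ is Gaussian; these are equivalent formulations of the same reduction.
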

\begin{proof} We will only prove the distribution for $\alpha$; the derivation for $\beta$ is entirely analogous.
By the unitary invariance of the uniform distribution \ch{over the Stiefel manifold} we may assume without
loss of generality that
$\ch{X_1} = [e_1\ \ldots\ e_k]$ and $x = e_{k+1}$\ch{, where $e_i$ is the $i$-th vector of the standard canonical basis}. By Theorem~\ref{thm:bartlett}, the matrix $V$ is obtained from the
QR factorization $\Omega = VR$ of an $n \times k$ Gaussian random matrix $\Omega$, with $R$ being invertible almost surely. We partition
\[
  V^* = \begin{bmatrix} V_1 & v_2 & \cdots \end{bmatrix} =
  R^{-*} \begin{bmatrix} \Omega_1 & \omega_2 & \cdots \end{bmatrix} =
  R^{-*} \Omega^*,
\]
such that $V_1,\Omega_1$ are $k\times k$ matrices and $v_2,\omega_2$ are vectors. Then
\[
   0 = V^* x_i = V^*X_1 a + V^*x \alpha =
  \begin{bmatrix} V_1 & v_2 \end{bmatrix}
  \begin{bmatrix} a \\ \alpha \end{bmatrix}
  = R^{-*} \begin{bmatrix} \Omega_1 & \omega_2 \end{bmatrix}
  \begin{bmatrix} a \\ \alpha \end{bmatrix}.
\]
Since submatrices of Gaussian random matrices are again Gaussian random matrices, this means that
$\begin{bmatrix} a \\ \alpha \end{bmatrix}$ is in the nullspace of a $k \times (k+1)$ Gaussian random matrix and has norm 1.
Thus, the result on the distribution of $\alpha$ follows from Lemma~\ref{lemma:prelim}.

The independence of $|\alpha|$ and $|\beta|$ follows from the independence of $U$ and
$V$ combined with the fact that $|\alpha|$ does not depend on $U$ and $|\beta|$ does not depend on $V$.
\end{proof}

\begin{remrk}\label{rem:real}
It is important to emphasize that the case $\mathbb F=\mathbb R$ in Proposition~\ref{prop:main}
not only requires $A,B,D_A,D_B$ to be real but also the eigenvalue \ch{$\lambda_i$} to be real.
\end{remrk}

\begin{remrk}An analysis similar to the one above was performed in~\cite[Proposition 6.5]{LotzNoferini}
and~\cite[Section 4.1]{KreGlib22} for unstructured perturbations. This analysis also starts from the
relation~\eqref{eq:alfa_beta} and then analyzes the distribution of $|\alpha|\cdot|\beta|$.
One significant difference in our case is that $\alpha$ and $\beta$ are independent due to the structure
of the perturbation in~\eqref{pert}, while this does not hold for the setting considered
in~\cite{LotzNoferini} and~\cite{KreGlib22}.
\end{remrk}

\subsection{Statistics of $|\alpha| \cdot |\beta|$}

As explained above, we aim at showing that the random variable $|\alpha||\beta|$ is unlikely to become tiny.
We start by computing the expected value of $|\alpha||\beta|$. Since $|\alpha|$ and $|\beta|$ are independent
random variables, we have $\mathbb E[|\alpha||\beta|]=\mathbb E[|\alpha|]\mathbb E[|\beta|]$.
The factors can be computed using the following result from~\cite[Lemma A.1]{LotzNoferini}.
\begin{lemma}\label{lem:beta_a_b}
Let $X\sim \mathrm{Beta}(a,b)$, where $a,b>0$. Then
\[
  \mathbb E\big[X^{1/2}\big]=\frac{\mathrm{B}(a+1/2,b)}{\mathrm{B}(a,b)},\quad \text{where}\quad
  \mathrm{B}(a,b)=\frac{\Gamma(a)\Gamma(b)}{\Gamma(a+b)}.
\]
\end{lemma}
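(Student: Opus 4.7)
The plan is to compute $\mathbb E[X^{1/2}]$ directly by integrating against the density of the Beta distribution, and then recognize the resulting integral as a Beta function. This is a short, routine computation, and the only substantive ingredient is the definition of the Beta function.

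First I would write down the density of $X\sim\mathrm{Beta}(a,b)$, namely
\[
 f_X(x)=\frac{1}{\mathrm{B}(a,b)}\,x^{a-1}(1-x)^{b-1},\qquad x\in(0,1),
\]
where the normalizing constant $\mathrm{B}(a,b)$ is finite because $a,b>0$. By the definition of expectation for a continuous random variable,
\[
 \mathbb E\big[X^{1/2}\big]=\int_0^1 x^{1/2}\,f_X(x)\,dx
 =\frac{1}{\mathrm{B}(a,b)}\int_0^1 x^{(a+1/2)-1}(1-x)^{b-1}\,dx.
\]

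Next I would observe that the remaining integral is precisely the Beta integral with parameters $a+1/2$ and $b$, which is well-defined since $a+1/2>0$ and $b>0$. Thus
\[
 \int_0^1 x^{(a+1/2)-1}(1-x)^{b-1}\,dx = \mathrm{B}(a+1/2,b),
\]
yielding $\mathbb E[X^{1/2}]=\mathrm{B}(a+1/2,b)/\mathrm{B}(a,b)$, as claimed. The expression of $\mathrm{B}(a,b)$ in terms of Gamma functions is the standard identity $\mathrm{B}(a,b)=\Gamma(a)\Gamma(b)/\Gamma(a+b)$.

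There is no real obstacle here: the statement is essentially a rewriting of the definition of the Beta function. The only thing to be careful about is verifying that the parameters satisfy the positivity conditions needed for convergence of the integrals, which is immediate from the hypothesis $a,b>0$.
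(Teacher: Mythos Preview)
Your proof is correct. The paper does not actually prove this lemma; it simply quotes it as a known result from~\cite[Lemma~A.1]{LotzNoferini}, so your direct computation via the Beta density is exactly the standard argument one would expect and fills in what the paper leaves as a citation.
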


To simplify the presentation, we will from now on denote the scalars $\alpha$ and $\beta$, in the setting of Proposition~\ref{prop:main}, as
$\alpha_{\mathbb F}$ and $\beta_{\mathbb F}$ with $\mathbb F \in \{\mathbb R, \mathbb C\}$.
Combining Proposition \ref{prop:main} and Lemma \ref{lem:beta_a_b} gives the following result.

\begin{lemma}\label{lem:expect}
Under the assumptions of Proposition~\ref{prop:main}, the following holds:
\begin{enumerate}
    \item[a)] $\displaystyle \mathbb E[|\alpha_\CC|]=\mathbb E[|\beta_\CC|]=\frac{\sqrt{\pi}\Gamma(k+1)}{2\Gamma(k+3/2)}$ \
    and \
    $\displaystyle\mathbb E\left[|\alpha_\CC||\beta_\CC|\right]=\frac{\pi \Gamma(k+1)^2}{4 \Gamma(k+3/2)^2}$.
    \item[b)] $\displaystyle \mathbb E[|\alpha_\RR|]=\mathbb E[|\beta_\RR|]=\frac{\Gamma((k+1)/2)}{\sqrt{\pi}\Gamma((k+2)/2)}$ \
    and \
    $\displaystyle \mathbb E\left[|\alpha_\RR||\beta_\RR|\right]=\frac{\Gamma((k+1)/2)^2}{\pi \Gamma((k+2)/2)^2}$.
\end{enumerate}
\end{lemma}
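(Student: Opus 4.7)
The plan is to unpack Proposition~\ref{prop:main} to obtain the explicit Beta parameters in each case, plug them into the half-moment formula from Lemma~\ref{lem:beta_a_b}, and then simplify the resulting Gamma-function ratios. Independence already does the heavy lifting, so only elementary evaluations remain.

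First, I would specialize Proposition~\ref{prop:main} to $\mathbb F=\CC$, where $\phi=2$, so that $|\alpha_\CC|^2,|\beta_\CC|^2\sim \mathrm{Beta}(1,k)$. Applying Lemma~\ref{lem:beta_a_b} with $a=1$, $b=k$ gives
\[
  \mathbb E[|\alpha_\CC|] = \frac{\mathrm{B}(3/2,k)}{\mathrm{B}(1,k)}
  = \frac{\Gamma(3/2)\,\Gamma(k+1)}{\Gamma(1)\,\Gamma(k+3/2)},
\]
and the identity $\Gamma(3/2)=\sqrt{\pi}/2$ yields the formula in part~a) for the individual expectation. The case $\mathbb F=\RR$ is analogous: here $\phi=1$, $|\alpha_\RR|^2,|\beta_\RR|^2\sim\mathrm{Beta}(1/2,k/2)$, and Lemma~\ref{lem:beta_a_b} with $a=1/2$, $b=k/2$ gives
\[
  \mathbb E[|\alpha_\RR|]=\frac{\mathrm{B}(1,k/2)}{\mathrm{B}(1/2,k/2)}
  = \frac{\Gamma(1)\,\Gamma((k+1)/2)}{\Gamma(1/2)\,\Gamma((k+2)/2)},
\]
which collapses to the formula in part~b) once $\Gamma(1/2)=\sqrt{\pi}$ is substituted. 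The identical distributions of $|\alpha_\mathbb F|$ and $|\beta_\mathbb F|$ give $\mathbb E[|\alpha_\mathbb F|]=\mathbb E[|\beta_\mathbb F|]$.

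Finally, Proposition~\ref{prop:main} also asserts that $|\alpha_\mathbb F|$ and $|\beta_\mathbb F|$ are independent random variables, whence $\mathbb E[|\alpha_\mathbb F||\beta_\mathbb F|]=\mathbb E[|\alpha_\mathbb F|]\,\mathbb E[|\beta_\mathbb F|]$, and squaring the previous expressions yields the two product formulas.

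There is essentially no obstacle: every ingredient is already stated in the excerpt, and the argument is a one-line Gamma-function computation per case. The only thing to be careful about is the bookkeeping with the shifted arguments $k+1,k+3/2$ in the complex case versus $(k+1)/2,(k+2)/2$ in the real case, to avoid confusing the shift coming from $a\mapsto a+1/2$ in Lemma~\ref{lem:beta_a_b} with the $\phi/2$ factor in the Beta parameters.
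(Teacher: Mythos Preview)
Your proposal is correct and follows exactly the approach the paper takes: the paper simply states that the lemma follows by combining Proposition~\ref{prop:main} with Lemma~\ref{lem:beta_a_b}, and you have carried out precisely that combination, including the Gamma-function simplifications and the use of independence for the product expectation.
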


\noindent Table~\ref{tab:vartheta} contains the computed expected values for different $k$, using the results of Lemma~\ref{lem:expect}.

Using the well-known bounds
\begin{equation}\label{bound_gamma}
    \sqrt{x}\le \frac{\Gamma(x+1)}{\Gamma(x+1/2)}\le \sqrt{x+1/2},
\end{equation}
for $x>0$, the expected values of $|\alpha||\beta|$ from Lemma \ch{\ref{lem:expect}} can be bounded as
\begin{equation}\label{bound_alpha}
\frac{\pi}{4(k+1)}\le \mathbb E[|\alpha_\CC||\beta_\CC|] \le \frac{\pi}{4(k+1/2)}, \quad \frac{2}{\pi(k+1)}\le \mathbb E[|\alpha_\RR||\beta_\RR|] \le \frac{2}{\pi k}.
\end{equation}
\begin{table}[htb]
\centering
\caption{Expected values of $|\alpha||\beta|$ for $k=n-\nrank(A,B)=1,2,4,\ldots,64$.}\label{tab:vartheta}
{%\small
\begin{tabular}{rrr}\hline \rule{0pt}{2.7ex}%
$k$ & $\mathbb E\left[|\alpha_\CC||\beta_\CC|\right]$  & $\mathbb E\left[|\alpha_\RR||\beta_\RR|\right]$ \\ \hline \rule{0pt}{2.3ex}%
$1$ & $0.44444$ & $0.40528$ \\
$2$ & $0.28444$ & $0.25000$ \\
%$3$ & $0.20898$ & $0.18013$ \\
$4$ & $0.16512$ & $0.14063$ \\
%$5$ & $0.13646$ & $0.11528$ \\
$8$ & $0.08972$ & $0.07477$ \\
$16$ & $0.04688$ & $0.03857$ \\
$32$ & $0.02398$ & $0.01959$ \\
$64$ & $0.01213$ & $0.00987$ \\ \hline
\end{tabular}
}
\end{table}%
One therefore expects that $\gamma_i$ underestimates the true values $\gamma(\lambda_i)$ by roughly a factor $1/k$.

For real matrices $A$ and $B$, one would prefer to use real matrices in the perturbation~\eqref{pert} as well, because
eigenvalue computations are performed more efficiently in real arithmetic. As we can see from Table \ref{tab:vartheta} as well as from the bounds~\eqref{bound_alpha}, the expected value of $|\alpha||\beta|$ for real perturbations is only slightly smaller than the one
for complex perturbations. However, as we will see in the following, the left tail of $|\alpha||\beta|$
is less favorable in the real case and it appears to be safer to use complex modifications of the original pencil even for real data.

\subsection{Bounds on left tail of $|\alpha| \cdot |\beta|$}

We will start with a simple tail bound that extends a result from~\cite[Proposition 4.3]{KreGlib22} to the complex case.

\begin{corollary}\label{cor:simple_bound}
Under the assumptions of Proposition~\ref{prop:main}, we have
\begin{equation}\label{eq:simple_boundC}
  \mathbb P(|\alpha_\CC| |\beta_\CC| < t) \le 2kt
\end{equation}
and
\begin{equation}\label{eq:simple_boundR}
  \mathbb P(|\alpha_\RR| |\beta_\RR| < t) \le \sqrt{8k t/\pi}
\end{equation}
for every $0\le t\le 1$.
\end{corollary}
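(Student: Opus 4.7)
The plan is to reduce both bounds to a one-dimensional tail estimate on $|\alpha|$ via a simple union bound. The inclusion $\{|\alpha||\beta| < t\} \subseteq \{|\alpha| < \sqrt{t}\} \cup \{|\beta| < \sqrt{t}\}$ holds since its complement---both $|\alpha|, |\beta| \ge \sqrt{t}$---forces $|\alpha||\beta| \ge t$. Combined with the fact from Proposition~\ref{prop:main} that $|\alpha|$ and $|\beta|$ are identically distributed, this yields
\[
  \mathbb{P}(|\alpha| |\beta| < t) \le 2\,\mathbb{P}\bigl(|\alpha| < \sqrt{t}\,\bigr),
\]
so it suffices to bound $\mathbb{P}(|\alpha| < u)$ at $u = \sqrt{t}$ using the Beta distribution of $|\alpha|^2$ from Proposition~\ref{prop:main}.

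For the complex case, $|\alpha_\CC|^2 \sim \mathrm{Beta}(1, k)$ has CDF $1 - (1-s)^k$, so Bernoulli's inequality $(1-u^2)^k \ge 1 - ku^2$ on $[0, 1]$ yields $\mathbb{P}(|\alpha_\CC| < u) \le ku^2$; setting $u = \sqrt{t}$ and doubling gives \eqref{eq:simple_boundC}.

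For the real case with $k \ge 2$, the density of $|\alpha_\RR|$ is $f(w) = 2(1-w^2)^{k/2-1}/\mathrm{B}(1/2, k/2)$, which is non-increasing on $[0,1]$. Hence $\mathbb{P}(|\alpha_\RR| < u) \le u\,f(0) = 2u/\mathrm{B}(1/2, k/2)$. Combining this with $\mathrm{B}(1/2, k/2) = \sqrt{\pi}\,\Gamma(k/2)/\Gamma((k+1)/2)$ and the bound $\Gamma((k+1)/2)/\Gamma(k/2) \le \sqrt{k/2}$ obtained from~\eqref{bound_gamma} gives $f(0) \le \sqrt{2k/\pi}$, from which \eqref{eq:simple_boundR} follows.

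The case $k = 1$ in the real setting is the main obstacle, as then $f$ is unbounded and \emph{increasing}, so the previous density argument fails. However, $\sqrt{8t/\pi} \ge 1$ for $t \ge \pi/8$ makes the claim trivial in that range, so only $t < \pi/8$ is nontrivial; for such $t$ we have $u = \sqrt{t} < \sqrt{\pi/8}$ and the explicit CDF $\mathbb{P}(|\alpha_\RR| < u) = (2/\pi)\arcsin u$ reduces the claim to $\arcsin u \le u\sqrt{\pi/2}$ on $[0, \sqrt{\pi/8}\,]$. Since $u \mapsto \arcsin(u)/u$ is monotonically increasing, it suffices to verify this at the endpoint, where it becomes $\arcsin \sqrt{\pi/8} \le \pi/4$, equivalent to $\pi/8 \le \sin^2(\pi/4) = 1/2$, i.e., to the elementary inequality $\pi \le 4$.
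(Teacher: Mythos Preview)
Your proof is correct and follows essentially the same strategy as the paper's: reduce to a one-variable tail via the union-bound inclusion $\{|\alpha||\beta|<t\}\subseteq\{|\alpha|<\sqrt t\}\cup\{|\beta|<\sqrt t\}$ (the paper phrases this as $\min\{|\alpha|^4,|\beta|^4\}\le|\alpha|^2|\beta|^2$), and then bound $\mathbb P(|\alpha|^2<t)$ using the Beta density together with~\eqref{bound_gamma}. The only substantive addition is your separate treatment of the real case $k=1$: the paper's ``similarly'' step tacitly uses $(1-x)^{k/2-1}\le 1$, which fails for $k=1$, so your explicit $\arcsin$ argument actually closes a small gap in the original proof.
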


\begin{proof}
 Using $\min\{|\alpha|^4,|\beta|^4\} \le |\alpha|^2 |\beta|^2$, we obtain
 \begin{align*}
  \mathbb P\left(|\alpha|^2 |\beta|^2 < t^2\right) &\le \mathbb P\left(\min\left\{|\alpha|^4,|\beta|^4\right\} < t^2\right) \le
  \mathbb P\left(|\alpha|^2 < t\right) + \mathbb P\left(|\beta|^2 < t\right) \\
 & = \frac{2}{\mathrm B(\phi/2,\phi k/2)} \int_0^t x^{\phi/2-1} (1-x)^{\phi k/2-1}\, \mathrm{d}x,
 \end{align*}
where $\phi=1$ for $\mathbb F = \RR$ and $\phi=2$ for $\mathbb F=\CC$.
 For $\phi = 2$, we obtain from $|1-x|\le 1$ that
 \[
  \mathbb P\left(|\alpha_\CC|^2 |\beta_\CC|^2 < t^2\right) \le \frac{2t}{\mathrm B(1, k)} = 2k t.
 \]
Similarly, for $\phi = 1$ we get
 \[
  \mathbb P\left(|\alpha_\RR|^2 |\beta_\RR|^2 < t^2\right) \le \frac{4\sqrt{t}}{\mathrm B(1/2, k/2)} \le \sqrt{\frac{8k t}{\pi}},
 \]
 where we used the bound \eqref{bound_gamma} to derive the last inequality.
 \end{proof}

 Although we know from Proposition \ref{prop:main} that $|\alpha|$ and $|\beta|$ are independent random
 variables, this is not used in Corollary \ref{cor:simple_bound}.
 The following proposition, where we exploit the fact that $|\alpha|$ and $|\beta|$ are independent, significantly improves the results of Corollary \ref{cor:simple_bound}.

\begin{proposition}\label{cor:better_bound}
Under the assumptions of Proposition~\ref{prop:main}, it holds that
\begin{equation}\label{eq:better_bound_CC}
  \mathbb P\left(|\alpha_\CC| |\beta_\CC| < t\right) \le k^2t^2(1-2\ln t)
\end{equation}
and
\begin{equation}\label{eq:better_bound_RR}
  \mathbb P\left(|\alpha_\RR| |\beta_\RR| < t\right) \le \frac{2k}{\pi} t(-\ln t)+{\cal O}(t), \quad t \to 0.
\end{equation}
\end{proposition}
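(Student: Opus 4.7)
The key observation is that Proposition~\ref{prop:main} makes $|\alpha|$ and $|\beta|$ independent with explicit beta distributions, so the plan is to condition on $|\alpha|$ and write
\[
  \mathbb P(|\alpha||\beta|<t)=F_{|\alpha|}(t)+\int_t^1 F_{|\beta|}(t/x)\,f_{|\alpha|}(x)\,dx,
\]
where the split at $x=t$ reflects that $\mathbb P(|\beta|<t/x)=1$ for $x<t$. After this, the proof reduces to bounding the density $f_{|\alpha|}$ and CDF $F_{|\beta|}$ on $[0,1]$ and evaluating the elementary integral $\int_t^1 dx/x=-\ln t$, which produces the logarithmic factor in both cases.

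For the complex case, the density $f(s)=2ks(1-s^2)^{k-1}$ and CDF $F(s)=1-(1-s^2)^k$ of $|\alpha|$ obey the crude linear bounds $F(s)\le ks^2$ and $f(s)\le 2ks$ on $[0,1]$. Substituting these estimates gives $F_{|\beta|}(t/x)f_{|\alpha|}(x)\le 2k^2t^2/x$ on $[t,1]$ and yields $\mathbb P\le kt^2+2k^2t^2(-\ln t)$, which is at most $k^2t^2(1-2\ln t)$ since $k\ge 1$. The real case proceeds analogously with $|\alpha|^2\sim \mathrm{Beta}(1/2,k/2)$ and density $f_{|\alpha|}(s)=\frac{2}{\mathrm B(1/2,k/2)}(1-s^2)^{k/2-1}$; for $k\ge 2$ this is bounded above by $2C$ with $C=1/\mathrm B(1/2,k/2)$, and the same splitting produces $\mathbb P\le 2Ct+4C^2t(-\ln t)$. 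Estimating $C$ via the gamma function bound~\eqref{bound_gamma} applied with $x=(k-1)/2$ yields $C\le\sqrt{k/(2\pi)}$ and hence $4C^2\le 2k/\pi$, matching the claimed leading order while the $2Ct$ contribution is absorbed into the $\mathcal O(t)$ remainder.

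The main obstacle is the small-$k$ regime of the real case, most notably $k=1$, where $f_{|\alpha|}(x)$ develops a $(1-x^2)^{-1/2}$ singularity at $x=1$ so the naive bound $f_{|\alpha|}\le 2C$ no longer holds. I would handle this case separately using the arcsine CDF $F_{|\alpha|}(s)=\tfrac{2}{\pi}\arcsin(s)$, the elementary inequality $\arcsin(s)\le \pi s/2$, and the closed-form evaluation
\[
\int_t^1 \frac{dx}{x\sqrt{1-x^2}}=\ln\!\frac{t}{1-\sqrt{1-t^2}}=-\ln t+\mathcal O(1)\qquad\text{as }t\to 0,
\]
which confirms that the same $\tfrac{2}{\pi}t(-\ln t)+\mathcal O(t)$ leading behavior survives and so \eqref{eq:better_bound_RR} holds uniformly in $k\ge 1$. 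A more unified alternative would exploit cancellation between the vanishing of $F_{|\beta|}(t/x)$ near $x=1$ and the blow-up of $f_{|\alpha|}$ there, which is the main technical subtlety to resolve.
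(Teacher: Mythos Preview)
Your proof is correct and follows essentially the same strategy as the paper: bound the joint density of the two independent beta variables and integrate over the region where the product is below $t$. The paper carries this out with the squared variables $|\alpha|^2,|\beta|^2\sim\mathrm{Beta}(\phi/2,\phi k/2)$ and the two-dimensional integral over $\{(x,y)\in[0,1]^2:xy\le t^2\}$, whereas you work with $|\alpha|,|\beta|$ directly via conditioning; after the change of variables $x=s^2$, $y=u^2$ these are the same computation and yield identical bounds in both the complex and the real case.

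The one substantive difference is in the real case. The paper bounds the $\mathrm{Beta}(1/2,k/2)$ density by $h(x;k)\le \tfrac{1}{\mathrm B(1/2,k/2)}x^{-1/2}$, keeping the integrable $x^{-1/2}$ factor, and then evaluates $\iint_{\cal D}x^{-1/2}y^{-1/2}\,dx\,dy$ in closed form. Your bound $f_{|\alpha|}(s)\le 2C$ is exactly the same estimate after the substitution $x=s^2$, so both arguments rely on $(1-x)^{k/2-1}\le 1$ and hence implicitly require $k\ge 2$. You treat the case $k=1$ separately via the arcsine CDF and the explicit evaluation of $\int_t^1\frac{dx}{x\sqrt{1-x^2}}$, which the paper's written argument does not address; in this respect your write-up is actually more complete.
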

\begin{proof}
To derive the bound~\eqref{eq:better_bound_CC} we first observe that, since $|\alpha_\CC|$ and $|\beta_\CC|$ are
independent by Proposition \ref{prop:main}, it holds that
\[
  \mathbb P(|\alpha_\CC| |\beta_\CC|\le t)=
  \mathbb P\left(|\alpha_\CC|^2 |\beta_\CC|^2\le t^2\right)=
  \iint_{\cal D}g(x;k)g(y;k)\,\mathrm{d}x\,\mathrm{d}y
\]
with ${\cal D}=\left\{(x,y)\in[0,1]\times[0,1]:\ xy\le t^2\right\}$ and
$
  g(x;k)=\frac{1}{\mathrm{B}(1,k)}(1-x)^{k-1}.
$
Using the bound $g(x;k)\le \frac{1}{\mathrm{B}(1,k)}=k$ gives~\eqref{eq:better_bound_CC}:
\[
  \mathbb P(|\alpha_\CC| |\beta_\CC|\le t)
  \le k^2 \iint_{\cal D}\mathrm{d}x\mathrm{d}y = k^2 t^2(1-2\ln{t}).
\]

For real perturbations we have $|\alpha_\RR|^2, |\beta_\RR|^2\sim \mathrm{Beta}(1/2,k/2)$ with the distribution function satisfying
\[
  h(x;k)=\frac{1}{\mathrm{B}(1/2,k/2)}x^{-1/2}(1-x)^{k/2-1} \le \frac{1}{\mathrm{B}(1/2,k/2)}x^{-1/2}.
\]
This gives
\begin{align*}
  \mathbb P(|\alpha_\RR| |\beta_\RR|\le t)&= \iint_{\cal D}h(x;k)h(y;k)\mathrm{d}x\mathrm{d}y\\
  &\le \frac{1}{B(1/2,k/2)^2} \iint_{\cal D}x^{-1/2}y^{-1/2}\mathrm{d}x\mathrm{d}y\\
  &=\frac {1}{B(1/2,k/2)^2} 4 t\left(2 \sqrt{\pi} - 1 + t -\ln{t}\right)\\
  &\le \frac{2 k}{\pi} t\left(2 \sqrt{\pi} - 1 + t -\ln{t}\right), %  \ln{t} + {\cal O}(t),
\end{align*}
where we applied \eqref{bound_gamma}.
\end{proof}

Proposition~\ref{cor:better_bound} indicates that, even for a real singular pencil, it would be better to use  complex perturbations
as they give a much smaller probability of obtaining tiny $|\alpha||\beta|$. This is confirmed by the following lower bound for
$\mathbb P(|\alpha_\RR| |\beta_\RR|< t)$.

\begin{lemma}\label{lem:real_lower} Under the assumptions of Proposition~\ref{prop:main}, it holds that
\begin{equation}\label{eq:lower_bound_RR}
  \mathbb P\left(|\alpha_\RR| |\beta_\RR| < t\right) \ge \sqrt{\frac{8(k-1)}{\pi}} t+{\cal O }(t^2), \quad t\to 0.
\end{equation}
\end{lemma}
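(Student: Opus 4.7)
The plan is to obtain the lower bound by a simple inclusion–exclusion argument that exploits the independence of $|\alpha_\RR|$ and $|\beta_\RR|$ established in Proposition~\ref{prop:main}. The key observation is that, since $|\alpha_\RR|, |\beta_\RR| \in [0,1]$, we have the set inclusion
\[
  \{|\alpha_\RR| < t\} \cup \{|\beta_\RR| < t\} \subseteq \{|\alpha_\RR||\beta_\RR| < t\}.
\]
Combined with the identical distribution and independence of $|\alpha_\RR|$ and $|\beta_\RR|$, this yields
\[
  \mathbb P(|\alpha_\RR||\beta_\RR| < t) \ge 2\mathbb P(|\alpha_\RR| < t) - \mathbb P(|\alpha_\RR| < t)^2,
\]
so it suffices to derive a sharp lower bound on the marginal tail $\mathbb P(|\alpha_\RR| < t)$ as $t \to 0$ and then to verify that the subtracted product term only contributes at order $\mathcal O(t^2)$.

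For the marginal, I would use $|\alpha_\RR|^2 \sim \mathrm{Beta}(1/2, k/2)$ to write
\[
  \mathbb P(|\alpha_\RR| < t) = \mathbb P(|\alpha_\RR|^2 < t^2) = \frac{1}{\mathrm{B}(1/2, k/2)} \int_0^{t^2} x^{-1/2}(1-x)^{k/2-1}\,\mathrm{d}x.
\]
On $[0, t^2]$ the factor $(1-x)^{k/2-1}$ equals $1 + \mathcal O(x)$, so the integral evaluates to $2t + \mathcal O(t^3)$ (with a constant depending on $k$ but not on $t$). Hence
\[
  \mathbb P(|\alpha_\RR| < t) = \frac{2t}{\mathrm{B}(1/2, k/2)} + \mathcal O(t^3), \quad t \to 0.
\]

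The next step is to bound $1/\mathrm{B}(1/2, k/2)$ from below. Writing
\[
  \frac{1}{\mathrm{B}(1/2, k/2)} = \frac{\Gamma((k+1)/2)}{\sqrt{\pi}\,\Gamma(k/2)}
\]
and applying the left inequality in \eqref{bound_gamma} (with $x = (k-1)/2$) gives $\Gamma((k+1)/2)/\Gamma(k/2) \ge \sqrt{(k-1)/2}$, hence $1/\mathrm{B}(1/2, k/2) \ge \sqrt{(k-1)/(2\pi)}$. Substituting back yields
\[
  2 \mathbb P(|\alpha_\RR| < t) \ge 4t\sqrt{\frac{k-1}{2\pi}} + \mathcal O(t^3) = \sqrt{\frac{8(k-1)}{\pi}}\,t + \mathcal O(t^3).
\]
Since $\mathbb P(|\alpha_\RR| < t)^2 = \mathcal O(t^2)$, the inclusion–exclusion bound delivers exactly the claimed estimate
\[
  \mathbb P(|\alpha_\RR||\beta_\RR| < t) \ge \sqrt{\frac{8(k-1)}{\pi}}\,t + \mathcal O(t^2), \quad t \to 0.
\]

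There is no essential obstacle here; the only mild care needed is in verifying that the dependence of the error term on $k$ is harmless, since we only claim an asymptotic bound as $t \to 0$ for fixed $k$. I would note in passing that comparing this lower bound with the upper bound in Proposition~\ref{cor:better_bound}, which has leading order $\frac{2k}{\pi}t(-\ln t)$, shows that already the polynomial term $\sqrt{8(k-1)/\pi}\,t$ is much larger than the complex tail bound $k^2 t^2(1 - 2\ln t)$ of~\eqref{eq:better_bound_CC} for small $t$, which quantitatively justifies the authors' preference for complex perturbations even in the real case.
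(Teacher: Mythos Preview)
Your proof is correct and follows essentially the same route as the paper: the inclusion--exclusion bound $\mathbb P(|\alpha_\RR||\beta_\RR|<t)\ge 2\,\mathbb P(|\alpha_\RR|<t)-\mathbb P(|\alpha_\RR|<t)^2$, the Taylor expansion of the marginal tail at $t=0$, and then the Gamma-ratio bound~\eqref{bound_gamma} to pass from $4/\mathrm{B}(1/2,k/2)$ to $\sqrt{8(k-1)/\pi}$. You supply slightly more detail (the explicit set inclusion via $|\alpha_\RR|,|\beta_\RR|\le 1$ and the explicit integral estimate), but the argument is the same.
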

\begin{proof}
It is easy to see that
\begin{align*}
  \mathbb P(|\alpha_\RR| |\beta_\RR| < t)&\ge \mathbb  P(|\alpha_\RR|<t) +
  \mathbb  P(|\beta_\RR|<t) -\mathbb  P(|\alpha_\RR|<t)\mathbb  P(|\beta_\RR|<t)\\
  &=
  2\, \mathbb  P(|\alpha_\RR|<t)-\mathbb P(|\alpha_\RR|<t)^2.
\end{align*}
From the Taylor expansion of $\mathbb  P(|\alpha_\RR|<t)$ around $t=0$ we get
\[
  \mathbb P(|\alpha_\RR| |\beta_\RR| < t)\ge \frac{4}{\mathrm{B}(1/2,k/2)}t +{\cal O}(t^2)
\]
and the bound follows from applying \eqref{bound_gamma}.
\end{proof}

From Lemma \ref{lem:real_lower} and Proposition \ref{cor:better_bound} we see that for sufficiently small $t>0$ the
lower bound \eqref{eq:lower_bound_RR} for $\mathbb P\left(|\alpha_\RR| |\beta_\RR| < t\right)$
is much larger than the upper bound \eqref{eq:better_bound_CC} for $\mathbb P\left(|\alpha_\CC| |\beta_\CC| < t\right)$.

Figure \ref{fig:bounds} compares the obtained bounds to $\mathbb P\left(|\alpha_\CC| |\beta_\CC| < t\right)$ and
$\mathbb P\left(|\alpha_\RR| |\beta_\RR| < t\right)$, computed for $k=4$ and $k=8$ using the probability density functions from Appendix~\ref{sec:distr}.
The solid black line corresponds to $\mathbb P\left(|\alpha_\CC| |\beta_\CC| < t\right)$.
The blue dotted and dashed lines are the refined bound~\eqref{eq:better_bound_CC} from Proposition \ref{cor:better_bound} and the simple upper bound
\ch{\eqref{eq:simple_boundC}} from Corollary \ref{cor:simple_bound}, respectively. The solid red line corresponds to
$\mathbb P\left(|\alpha_\RR| |\beta_\RR| < t\right)$. The corresponding bounds are
magenta curves, which show the lower bound~\eqref{eq:lower_bound_RR} from Lemma~\ref{lem:real_lower},
the refined upper bound~\eqref{eq:better_bound_RR} from Proposition \ref{cor:better_bound},
and the simple upper bound~\ch{\eqref{eq:simple_boundR}} from Corollary \ref{cor:simple_bound}, respectively,
As expected, the bounds from Proposition \ref{cor:better_bound} are much
sharper than the simple bounds from Corollary \ref{cor:simple_bound}. Also, it is clearly seen from Figure~\ref{fig:bounds} that the probability of obtaining a tiny value for $|\alpha||\beta|$ is much larger when using real perturbations.

\begin{figure}[ht]
    \centering
    \includegraphics[width=5.8cm]{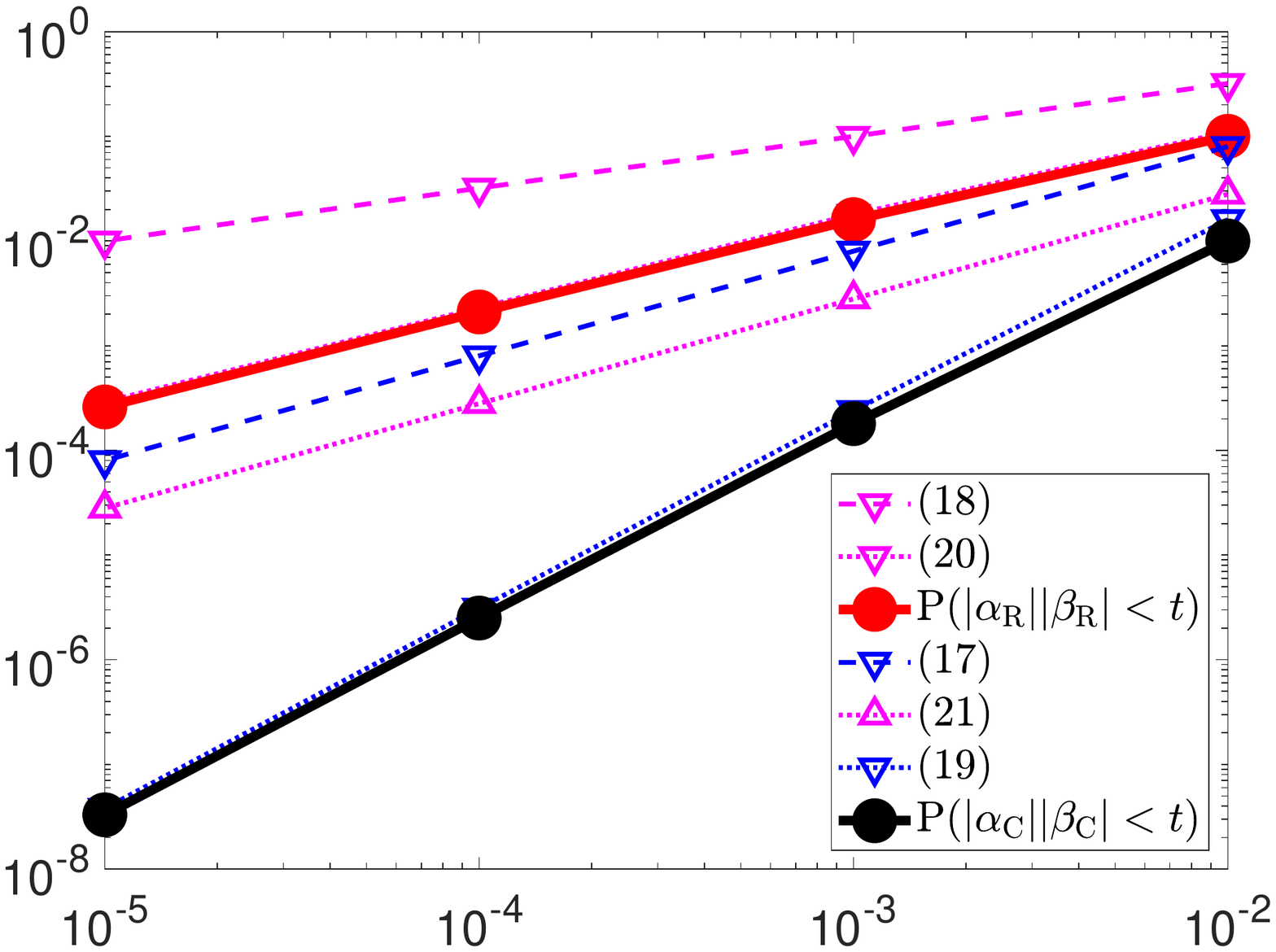}\
    \includegraphics[width=5.8cm]{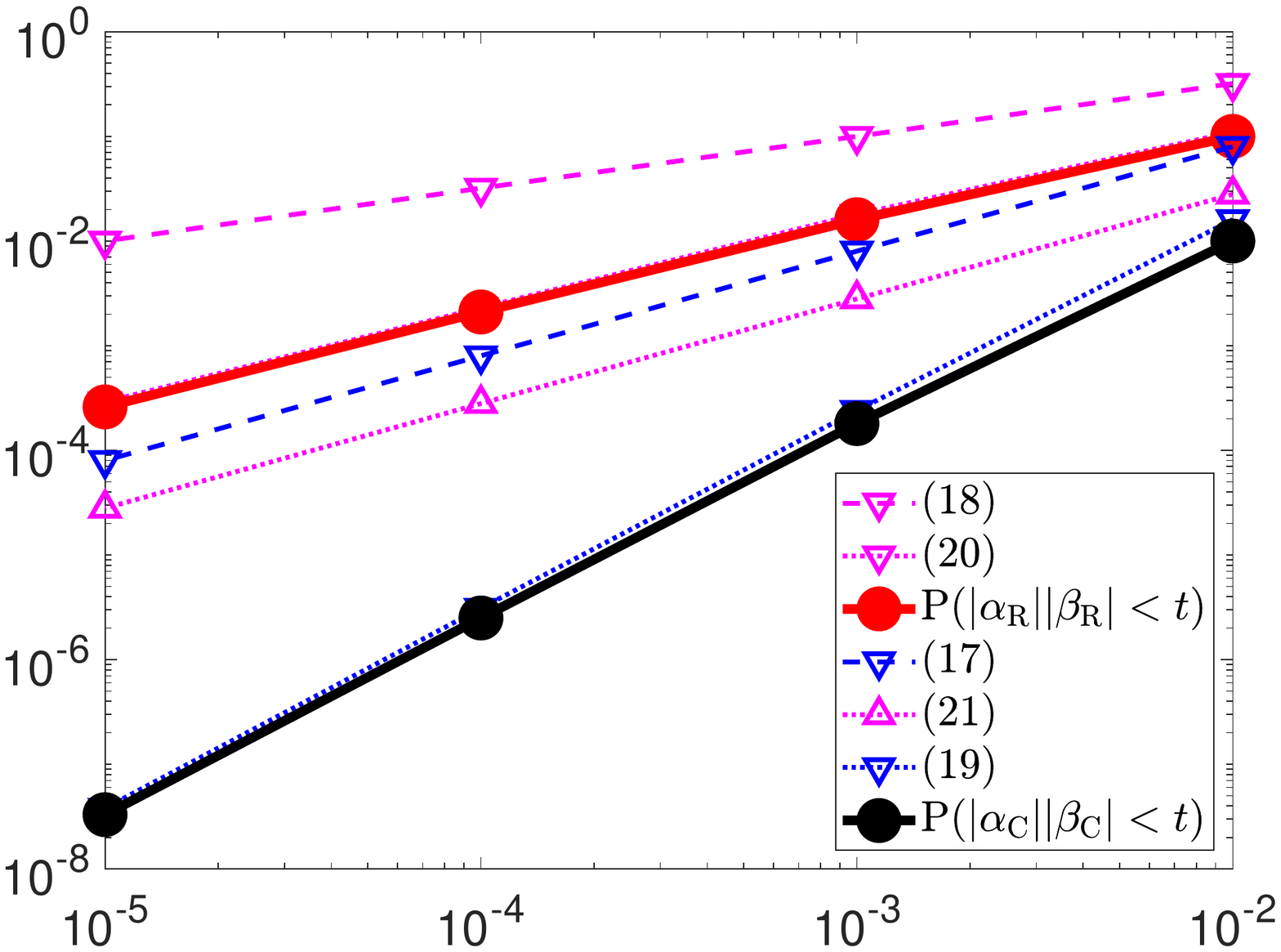}
    %\vspace{-0.4cm}

    \caption{Comparison of bounds and actual values of $\mathbb P(|\alpha||\beta|<t)$ for
    $t=10^{-5},10^{-4},10^{-3},10^{-2}$, and for $k=4$ (left) and $k=8$ (right)}
    \label{fig:bounds}
\end{figure}

\begin{paragraph}{Summary}
 Together with the discussion in the beginning of this section, Proposition~\ref{cor:better_bound} allows us to compare the (reciprocal) eigenvalue sensitivity $\gamma(\lambda_i)$ of the original pencil with the corresponding quantity
 $\gamma_i$ for any of the three modified pencils used in Algorithms~1--3. For complex perturbations, we obtain that
 \[
    \gamma_i \le \gamma(\lambda_i) \le \gamma_i / t
\]
holds with probability at least $1-k^2t^2(1-2\ln t)$ for any $t > 0$.
\end{paragraph}

\section{Numerical examples}\label{sec:numresults}

All numerical examples were obtained with Matlab 2021b~\cite{Matlab}. We used the implementations of Algorithms~1--3
available as routine {\tt singgep} in MultiParEig~\cite{MultiParEig}.

\begin{example}\label{ex:ena}\rm
We consider the $8\times 8$ singular matrix pencil
\begin{equation}\label{eq:AB7x7} {\small
A={\left[\begin{array}{rrrrrrrr}
 -1 & -1 & -1 & -1 & -1 & -1 & -1 & 0\\
 1 & 0 & 0 & 0 & 0 & 0 & 0 & 0\\
 1 & 2 & 1 & 1 & 1 & 1 & 1 & 0\\
 1 & 2 & 3 & 3 & 3 & 3 & 3 & 0\\
 1 & 2 & 3 & 2 & 2 & 2 & 2 & 0\\
 1 & 2 & 3 & 4 & 3 & 3 & 3 & -1\\
 1 & 2 & 3 & 4 & 5 & 5 & 4 & 1\\
 0 & 0 & 0 & 0 & 2 & 2 & 1 & 2\end{array}\right]},\quad
B={\left[\begin{array}{rrrrrrrr}
 -2 & -2 & -2 & -2 & -2 & -2 & -2 & 0 \\
 2 & -1 & -1 & -1 & -1 & -1 & -1 & 0 \\
 2 & 5 & 5 & 5 & 5 & 5 & 5  & 0\\
 2 & 5 & 5 & 4 & 4 & 4 & 4  & 0\\
 2 & 5 & 5 & 6 & 5 & 5 & 5  & -1\\
 2 & 5 & 5 & 6 & 7 & 7 & 7  & 1\\
 2 & 5 & 5 & 6 & 7 & 6 & 6 & 1\\
 0 & 0 & 0 & 0 & 0 & -1 & -1 & 0\end{array}\right]}},
\end{equation}
which is constructed so that the KCF contains blocks of all four possible types.
It holds that ${\rm nrank}(A,B)=6$, and the KCF has blocks $J_1(1/2)$, $J_1(1/3)$, $N_1$, $L_0$, $L_1$, $L_0^T$, and $L_2^T$.

Algorithm~1 was applied $10^5$ times using random real and complex modifications; we compared the
computed values of $\gamma_1$ to the exact value of $\gamma(\lambda_1)$
for the eigenvalue $\lambda_1=1/3$. The histograms of $\gamma_1/\gamma(\lambda_1)$
for real and complex modifications together with the corresponding
\ch{probability density function (pdf)} from Appendix~\ref{sec:distr} are
presented in Figure \ref{fig:hist4}. The histograms appear to be consistent with the pdfs. The computed average values of
$|\alpha||\beta|$ are $0.28437$ for complex and $0.24934$ for real modifications,
which are both close to the theoretically predicted values for $k=2$ in Table~\ref{lem:beta_a_b}.
We note that we get almost identical results for the other eigenvalue $\lambda_2=1/2$.

\begin{figure}[ht]
    \centering
    \includegraphics[width=5.8cm]{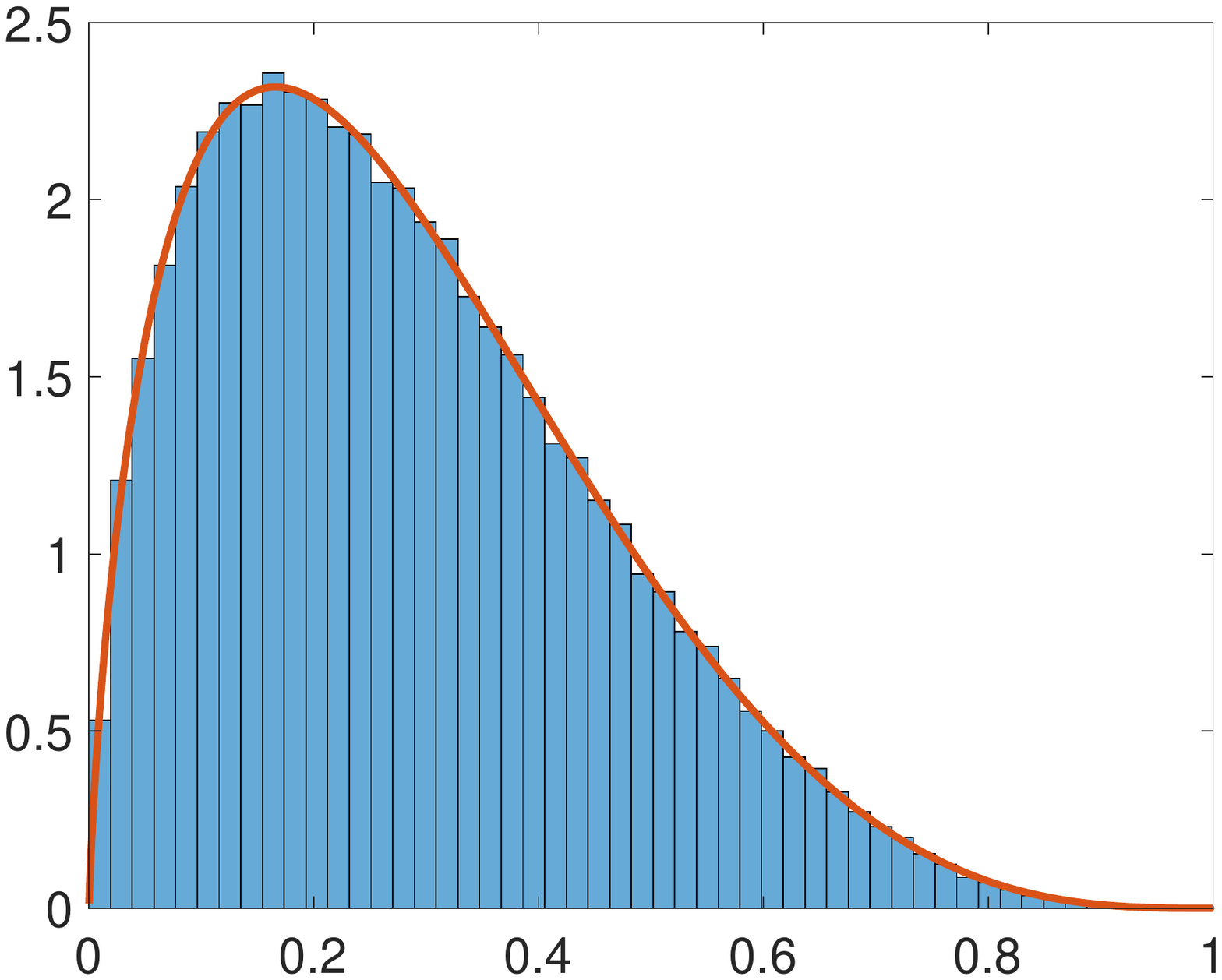}\
    \includegraphics[width=5.8cm]{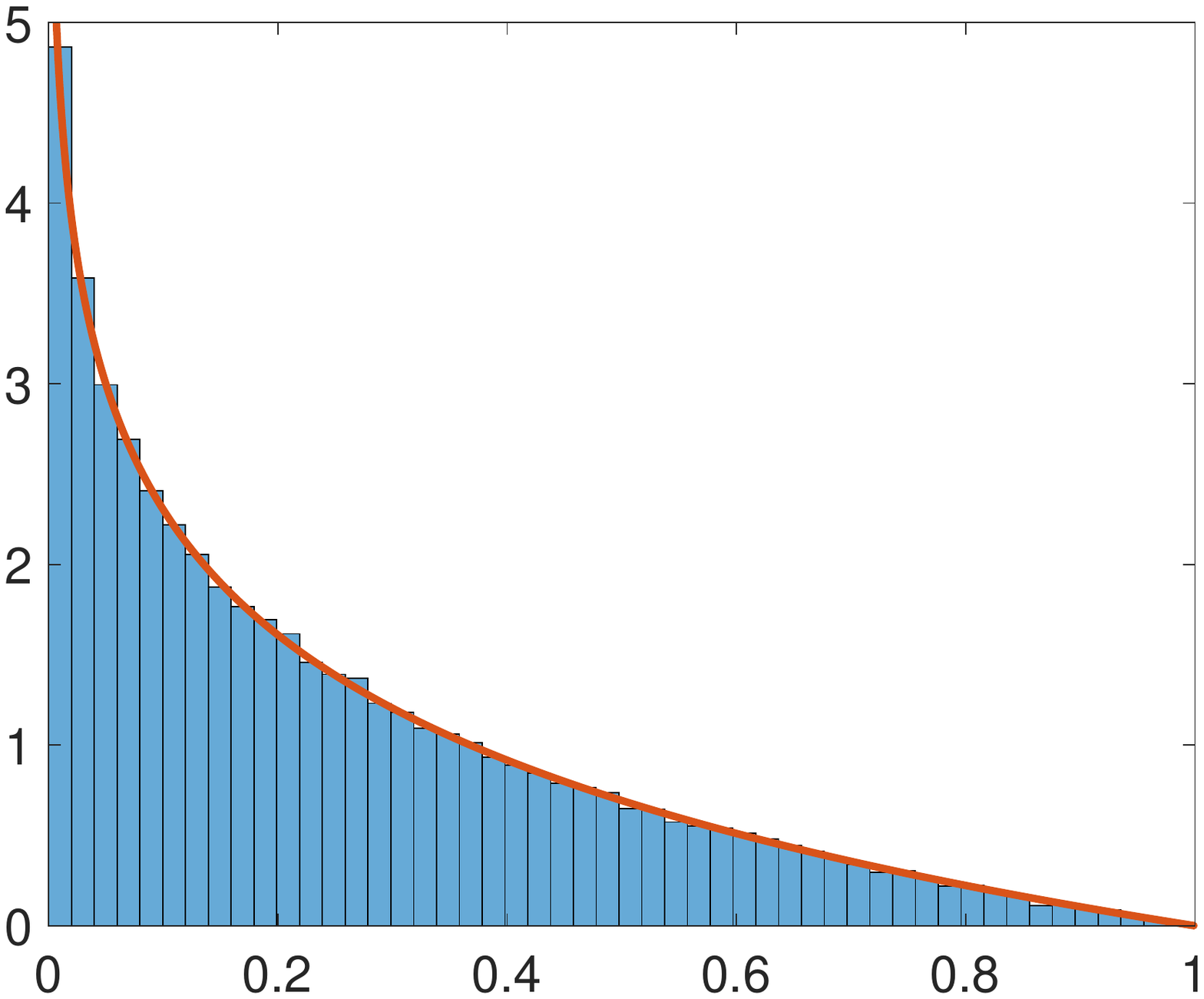}
    %\vspace{-0.7cm}

    \caption{Example~\ref{ex:ena}: Histogram of $\gamma_1/\gamma(\lambda_1)$ and pdf using complex (left) and real (right) modifications}
    \label{fig:hist4}
\end{figure}
\end{example}

\begin{example}\label{ex:dva}\rm For the second example we consider
the pencil $\Delta_1-\lambda \Delta_0$ from \cite[Ex.~7.1]{HMP19} with matrices
\[\Delta_0=B_1\otimes C_2-C_1\otimes B_2,\quad
  \Delta_1=C_1\otimes A_2-A_1\otimes C_2
\]
of size $25\times 25$
related to the two-parameter eigenvalue problem (for details see, e.g., \cite[Sec. 7]{HMP19}) of the form
\begin{align*}
A_1+\lambda B_1+\mu C_1 &=\small{\left[\begin{array}{ccccc}
   0 & 0 & 4 + 7\lambda & 1 & 0\\
   0 & 5 + 8\lambda & 2 & -\lambda & 1 \\
   6 + 9\lambda + 10\mu & 3 & 1 & 0 & -\lambda\\
   1 & -\mu & 0 & 0 & 0\\
   0 & 1 & -\mu & 0 & 0 \end{array}\right]}, \\
A_2+\lambda B_2+\mu C_2 &=\small{\left[\begin{array}{ccccc}
   0 & 0 & 7 + 4\lambda & 1 & 0\\
   0 & 6 + 3\lambda & 9 & -\lambda & 1 \\
   5 + 2\lambda + \mu & 8 & 10 & 0 & -\lambda\\
   1 & -\mu & 0 & 0 & 0\\
   0 & 1 & -\mu & 0 & 0 \end{array}\right],}
\end{align*}
where
{\small \begin{align*}
\det(A_1+\lambda B_1+\mu C_1) &= 1 + 2\lambda + 3\lambda + 4\lambda^2 +
5\lambda \mu + 6\mu^2 + 7\lambda^3 +
8\lambda^2\mu +
9\lambda\mu^2 + 10\mu^3,\\
\det(A_2+\lambda B_2+\mu C_2) &= 10 + 9\lambda + 8\mu + 7\lambda^2 + 6\lambda\mu +
 5\mu^2 + 4\lambda^3 + 3\lambda^2\mu + 2\lambda\mu^2 + \mu^3.
\end{align*}}
The normal rank of $\Delta_1-\lambda \Delta_0$ is $21$ and the KCF
contains 4 $L_0$, 4 $L_0^T$, 2 $N_4$, 1 $N_2$, 2 $N_1$, and 9 $J_1$ blocks. Its finite
eigenvalues are $\lambda$-components of the 9 solutions of the system of two
bivariate polynomials $\det(A_1+\lambda B_1+\mu C_1)=0$ and
$\det(A_2+\lambda B_2+\mu C_2)=0$.
The pencil $\Delta_1-\lambda \Delta_0$ has one real eigenvalue $\lambda_1=-2.41828$ and
eight complex eigenvalues $\lambda_2,\ldots,\lambda_9$.

Algorithm~2 was applied $10^5$ times using random real and complex projections. We compared the computed values of
$\gamma_1$ to the exact value of $\gamma(\lambda_1)$
for the real eigenvalue $\lambda_1$. In both cases the method successfully
computed all finite eigenvalues. The histograms of $\gamma_1/\gamma(\lambda_1)$
together with the corresponding pdf from Appendix~\ref{sec:distr} are presented in Figure
\ref{fig:hist8}. Similar to the previous example, both histograms appear to be consistent with the pdfs. The computed average values of
$|\alpha||\beta|$ are $0.16413$ for complex and $0.14124$ for real projections;
both are again very close to the values in Table~\ref{lem:beta_a_b} for $k=4$.

\begin{figure}[ht]
    \centering
    \includegraphics[width=5.8cm]{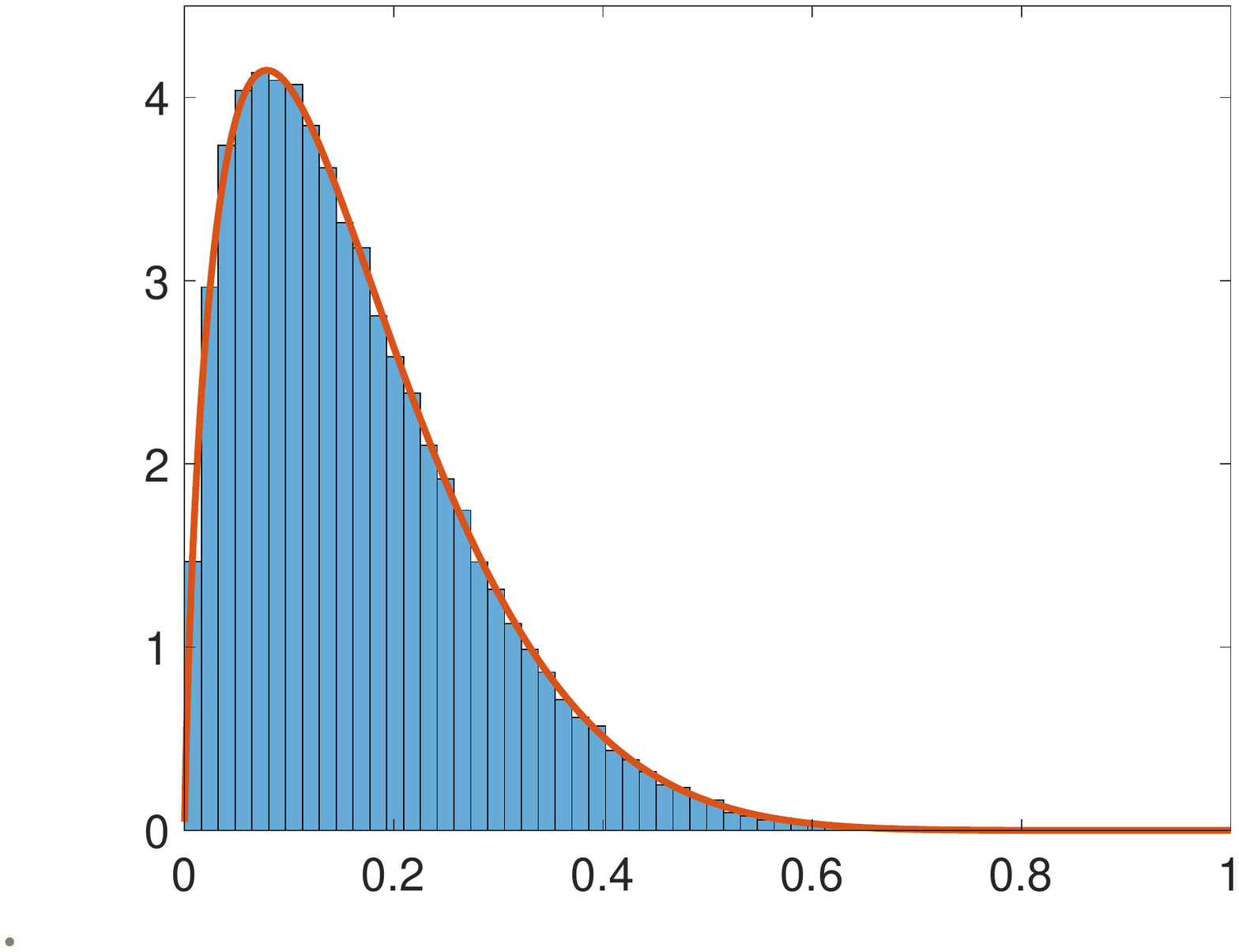}\
    \includegraphics[width=5.8cm]{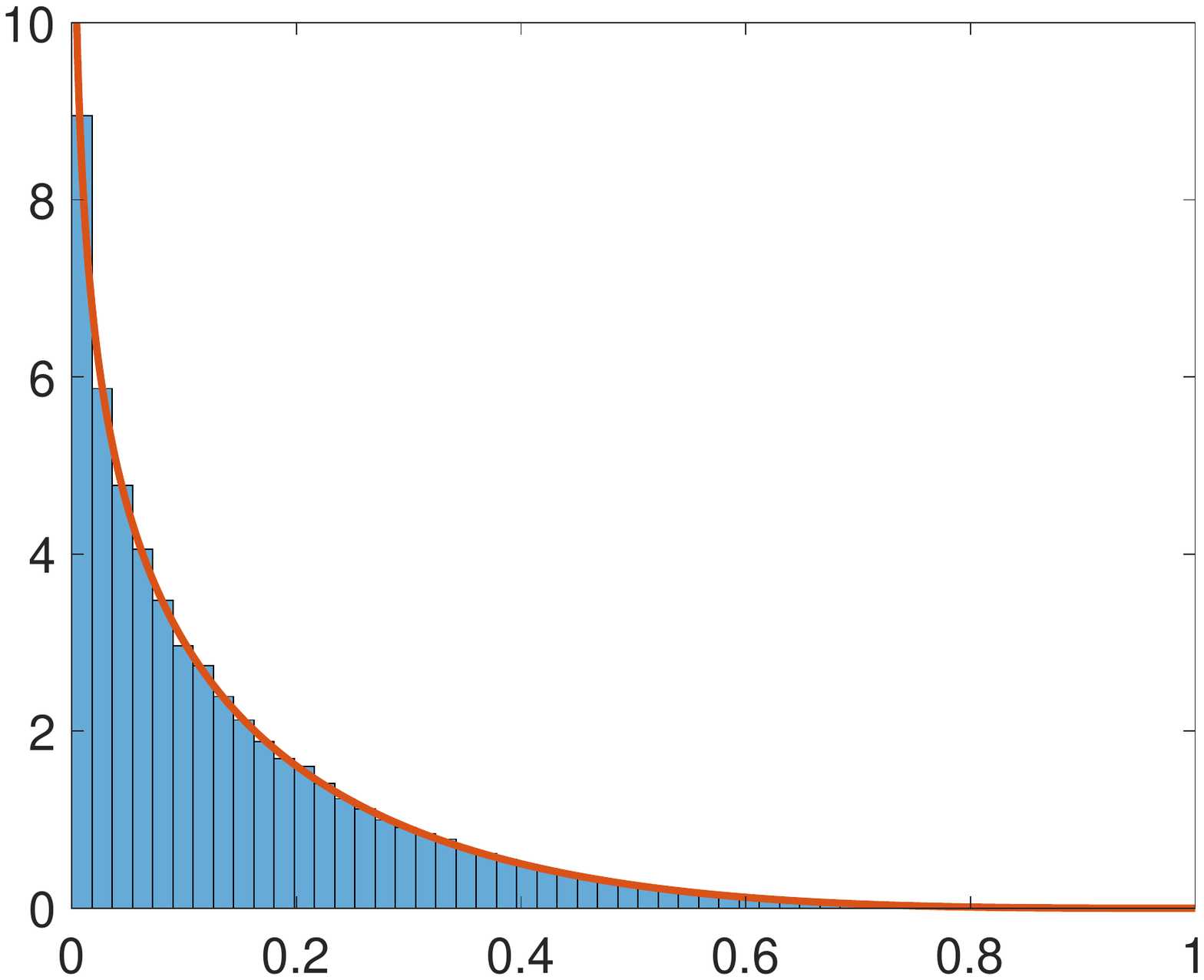}
    %\vspace{-0.7cm}

    \caption{Example~\ref{ex:dva}: Histogram of $\gamma_1/\gamma(\lambda_1)$ and pdf using complex (left) and real (right) projections}
    \label{fig:hist8}
\end{figure}
\end{example}

Let us note that we get essentially identical results if we exchange the methods and use Algorithms~2 or 3 in Example~\ref{ex:ena}
and Algorithms~1 or 3 in Example~\ref{ex:dva}, as expected by Corollary \ref{cor:zveza}.
However, if we apply Algorithm~1 or 3 with real modifications or Algorithm~2 with real projections
to the real pencil $\Delta_1-\lambda \Delta_0$,
and consider any of the  8 complex eigenvalues $\lambda_2,\ldots,\lambda_9$,
we get results that cannot be explained by Proposition~\ref{prop:main} and behave
like the results in the next example.

\begin{example}\label{ex:tri}\rm
In this example we study the effect of real projections on a complex eigenvalue of a real pencil,
which is a situation that is not covered by Proposition~\ref{prop:main}. By considering two
equivalent singular pencils we will show that the
distribution function for $|\alpha_\RR||\beta_\RR|$, when we apply real projections to complex eigenvalues of real
pencils, depends on more than just the difference $k$ between the size of the pencil and its normal rank, which is the key value in Proposition~\ref{prop:main}.

We take block diagonal matrices
\begin{equation}\label{eq:AB7x7new}
A_0={\left[\begin{array}{rr}
 e_1e_5^T &  \\
  & A_{20} \end{array}\right]},\quad
B_0={\left[\begin{array}{rr}
 e_1e_4^T &  \\
  & B_{20} \end{array}\right]},
\end{equation}
where $e_1,e_4,e_5$ are standard basis vectors in $\RR^5$, and
\[A_{20}=\footnotesize\left[\begin{array}{rrrrr}
 1 & 0 & 0 & 0 & 0 \\
 0 & 2 & 0 & 0 & 0  \\
 0 & 0 & 1 & 1 & 0 \\
 0 & 0 & -1 & 1 & 0  \\
 0 & 0 & 0 & 0 & 1 \end{array}\right],\quad
B_{20}=\footnotesize\left[\begin{array}{rrrrr}
 1 & 0 & 0 & 0 & 0 \\
 0 & 1 & 0 & 0 & 0  \\
 0 & 0 & 1 & 0 & 0 \\
 0 & 0 & 0 & 1 & 0  \\
 0 & 0 & 0 & 0 & 0 \end{array}\right].
 \]
The $10\times 10$ pencil $A_0-\lambda B_0$ has
${\rm nrank}(A_0,B_0)=6$ and the KCF has blocks $J_1(1+\mathrm{i})$, $J_1(1-\mathrm{i})$, $J_1(2)$, $N_1$, 3 $L_0$, $L_1$, 3 $L_0^T$, and $L_1^T$.
We multiply $A_0$ and $B_0$ into $A_i=Q_iA_0Z_i$ and $B_i=Q_iB_0Z_i$ by \ch{real} matrices $Q_i$ and $Z_i$ \ch{whose entries are
independent random variables uniformly distributed on $(0,1)$} for $i=1,2$
to get two equivalent singular pencils of the same size, normal rank and eigenvalues.

Algorithm~2 was applied $10^5$ times using random real projections to each of the pencils $A_1-\lambda B_1$ and $A_2-\lambda B_2$
and the computed value of $\gamma_1$ was compared to the exact value of $\gamma(\lambda_1)$ for the complex eigenvalue $\lambda_1=1+\mathrm{i}$.
The histograms of $\gamma_1/\gamma(\lambda_1)$ together with the theoretical distribution functions from Section~\ref{sec:distr} are
presented in Figure \ch{\ref{fig:histRC}}.
We see that, although the pencils $A_1-\lambda B_1$ and $A_2-\lambda B_2$ are equivalent, the  histograms are different. The histograms also
look different than in the case when we use real projections for a real eigenvalue for $k=4$
 (Figure~\ref{fig:hist4} \ch{right}) or complex projections (Figure~\ref{fig:hist4} \ch{left}).
  While the shape of the left histogram in Figure~\ref{fig:histRC} resembles the
 shape expected for complex perturbations, is the shape of the right histogram more in line with the
 distribution function for real perturbations.
The computed average values of $|\alpha_\RR||\beta_\RR|$ are also
different, we get $0.12195$ for $A_1-\lambda B_1$ and $0.13623$ for $A_2-\lambda B_2$, both values are completely
different from the values in Table~\ref{lem:beta_a_b} for $k=4$.

Since we get histograms of different shape for two real equivalent pencils, this shows that the
distribution function for $|\alpha_\RR||\beta_\RR|$, when we apply real perturbations to complex eigenvalues of real
pencils, depends on more than just the structure of the KCF.

 \begin{figure}[ht]
    \centering
    \includegraphics[width=5.8cm]{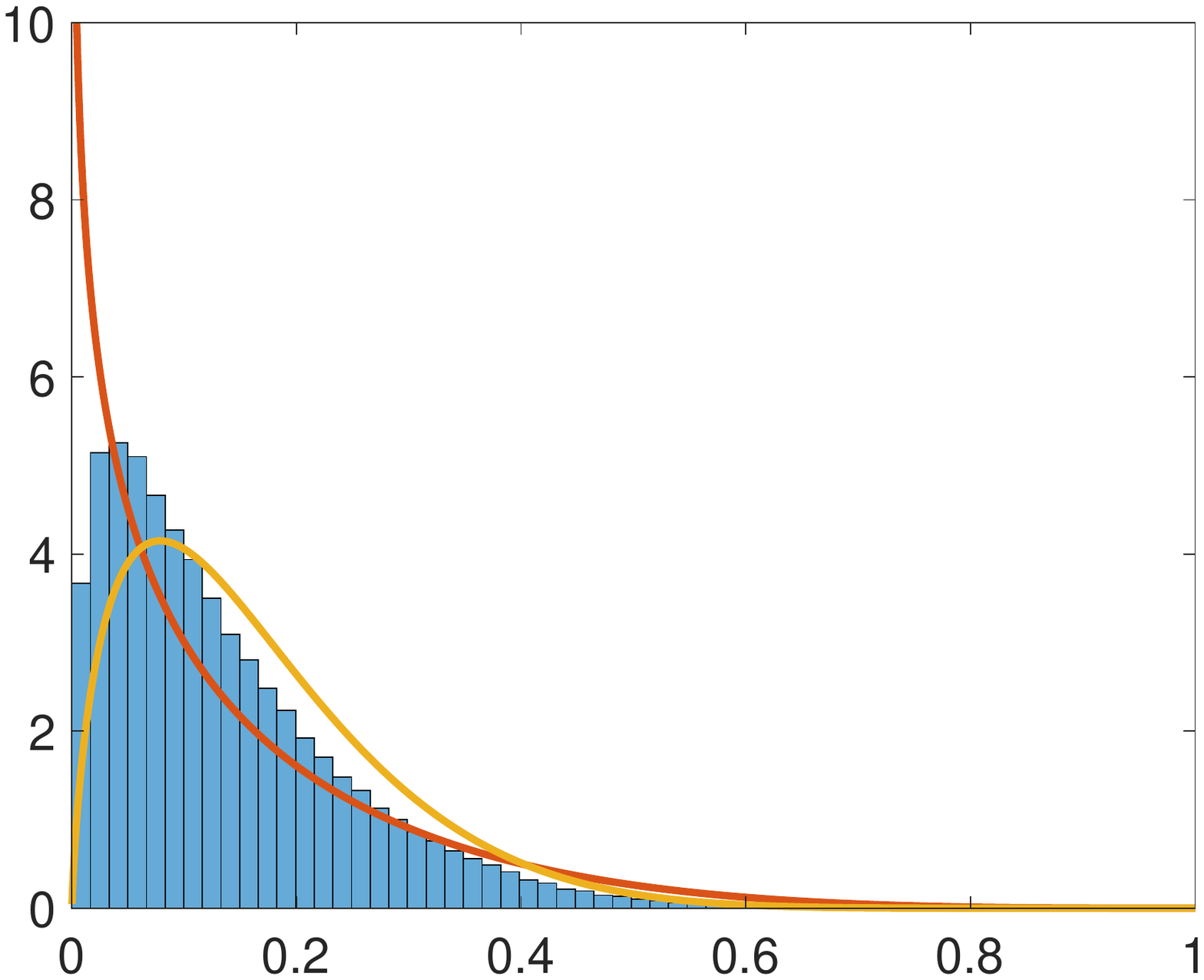}\
    \includegraphics[width=5.8cm]{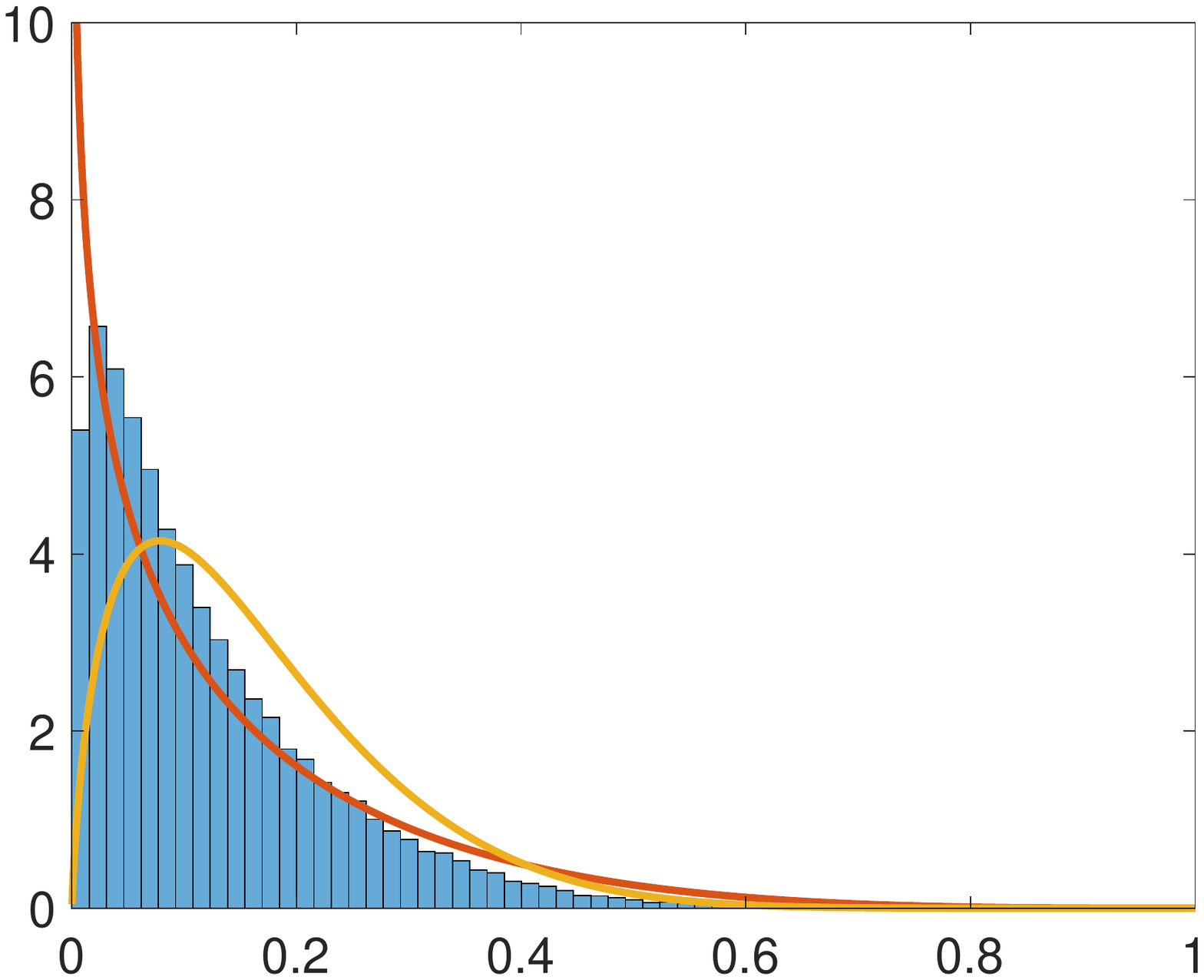}

    \caption{Example~\ref{ex:tri}: Histogram of $\gamma_1/\gamma(\lambda_1)$ and theoretical distribution functions
    (real and complex) for $k=4$ using real perturbations for a complex eigenvalue of a real
    pencil for $A_1-\lambda B_1$ (left) and $A_2-\lambda B_2$ (right).}
    \label{fig:histRC}
\end{figure}

We remark that the histograms (not shown) for the real eigenvalue $\lambda_3=2$ for
both pencils look identical to the right picture in Figure~\ref{fig:hist8} from
Example~\ref{ex:dva}, where $k=4$ as well, which agrees with Remark~\ref{rem:real_complex} that even if some
eigenvalues of the real pencil are complex, this does not affect the behavior of real perturbations
to real eigenvalues. The computed average values of $|\alpha_\RR||\beta_\RR|$ for the real eigenvalue $\lambda_3$ are
$0.14077$ for $A_1-\lambda B_1$ and $0.14110$ for $A_2-\lambda B_2$, they both
agree with the value in Table~\ref{lem:beta_a_b} for $k=4$.
\end{example}
\smallskip

The last numerical example reflects that the case of real perturbations
for a complex eigenvalue of a real singular pencil is not covered by
Proposition~\ref{prop:main}. Indeed, this case was not taken properly into account
in~\cite{LotzNoferini} and it remains an open problem to derive an expression or a tight simple bound for the
$\delta$-weak condition number of a complex eigenvalue under real perturbations.

\section{Conclusions}\label{sec:conclus}
We have analyzed three random based numerical methods for computing finite eigenvalues of a singular matrix pencil.
All algorithms are based on random matrices that transform the
original singular pencil into a regular one in such way that the eigenvalues remain intact. Our analysis confirms
the numerical validity of these methods with high probability.

We also obtained sharp \ch{left tail bounds} on the distribution of a product of two independent random variables
distributed with the generalized beta distribution of the first kind or Kumaraswamy distribution.

\begin{paragraph}{\bf Acknowledgments} We thank Ivana \v{S}ain Glibi\'{c} for discussions related to this work.
\ch{The authors also thank the reviewers for their careful reading and useful comments.}
\
 \end{paragraph}

\section*{Declarations}

\begin{paragraph}{\bf Conflict of interests statement} Not applicable.
\end{paragraph}

\bibliography{references}

\begin{thebibliography}{10}

\bibitem{DM0D8}
F.~De~Ter\'{a}n, F.~M. Dopico, and J.~Moro.
\newblock First order spectral perturbation theory of square singular matrix
  pencils.
\newblock {\em Linear Algebra Appl.}, 429(2-3):548--576, 2008.

\bibitem{Demmel1993}
J.~Demmel and B.~K\r{a}gstr\"{o}m.
\newblock The generalized {S}chur decomposition of an arbitrary pencil
  {$A-\lambda B$}: robust software with error bounds and applications. {I}.
  {T}heory and algorithms.
\newblock {\em ACM Trans. Math. Software}, 19(2):160--174, 1993.

\bibitem{Demmel1993a}
J.~Demmel and B.~K\r{a}gstr\"{o}m.
\newblock The generalized {S}chur decomposition of an arbitrary pencil
  {$A-\lambda B$}: robust software with error bounds and applications. {II}.
  {S}oftware and applications.
\newblock {\em ACM Trans. Math. Software}, 19(2):175--201, 1993.

\bibitem{Dixon}
J.~D. Dixon.
\newblock Estimating extremal eigenvalues and condition numbers of matrices.
\newblock {\em SIAM J. Numer. Anal.}, 20(4):812--814, 1983.

\bibitem{DopNof}
F.~M. Dopico and V.~Noferini.
\newblock Root polynomials and their role in the theory of matrix polynomials.
\newblock {\em Linear Algebra Appl.}, 584:37--78, 2020.

\bibitem{Eaton}
M.~L. Eaton.
\newblock {\em Multivariate statistics}.
\newblock John Wiley \& Sons, Inc., New York, 1983.

\bibitem{EM}
A.~Edelman and Y.~Ma.
\newblock Staircase failures explained by orthogonal versal forms.
\newblock {\em SIAM J. Matrix Anal. Appl.}, 21(3):1004--1025, 2000.

\bibitem{Fraysse}
V.~Frayss\'{e} and V.~Toumazou.
\newblock A note on the normwise perturbation theory for the regular
  generalized eigenproblem.
\newblock {\em Numer. Linear Algebra Appl.}, 5(1):1--10, 1998.

\bibitem{Gan59}
F.~R. Gantmacher.
\newblock {\em The theory of matrices. {V}ols. 1, 2}.
\newblock Chelsea Publishing Co., New York, 1959.
\newblock Translated by K. A. Hirsch.

\bibitem{HighamHigham97}
D.~J. Higham and N.~J. Higham.
\newblock Structured backward error and condition of generalized eigenvalue
  problems.
\newblock {\em SIAM J. Matrix Anal. Appl.}, 20(2):493--512, 1999.

\bibitem{HMP19}
M.~E. Hochstenbach, C.~Mehl, and B.~Plestenjak.
\newblock Solving singular generalized eigenvalue problems by a rank-completing
  perturbation.
\newblock {\em SIAM J. Matrix Anal. Appl.}, 40(3):1022--1046, 2019.

\bibitem{HMP22}
M.~E. Hochstenbach, C.~Mehl, and B.~Plestenjak.
\newblock Solving singular generalized eigenvalue problems. {P}art {II}:
  {P}rojection and augmentation.
\newblock {\em SIAM J. Matrix Anal. Appl.}, 44(4):1589--1618, 2023.

\bibitem{Jones}
M.~C. Jones.
\newblock Kumaraswamy's distribution: {A} beta-type distribution with some
  tractability advantages.
\newblock {\em Stat. Methodol.}, 6(1):70--81, 2009.

\bibitem{Kragstrom2006}
B.~K\r{a}gstr{\"{o}}m and D.~Kressner.
\newblock Multishift variants of the {QZ} algorithm with aggressive early
  deflation.
\newblock {\em SIAM J. Matrix Anal. Appl.}, 29(1):199--227, 2006.

\bibitem{KressnerVoigt2015}
D.~Kressner and M.~Voigt.
\newblock Distance problems for linear dynamical systems.
\newblock In {\em Numerical algebra, matrix theory, differential-algebraic
  equations and control theory}, pages 559--583. Springer, Cham, 2015.

\bibitem{KreGlib22}
D.~Kressner and I.~\v{S}ain Glibi\'{c}.
\newblock Singular quadratic eigenvalue problems: linearization and weak
  condition numbers.
\newblock {\em BIT}, 63(1):Paper No. 18, 25, 2023.

\bibitem{LotzNoferini}
M.~Lotz and V.~Noferini.
\newblock Wilkinson's bus: weak condition numbers, with an application to
  singular polynomial eigenproblems.
\newblock {\em Found. Comput. Math.}, 20(6):1439--1473, 2020.

\bibitem{Matlab}
The MathWorks Inc., Natick, MA.
\newblock {\em \textsc{Matlab} Version 2021b}.

\bibitem{McDonald}
J.~B. McDonald.
\newblock Some generalized functions for the size distribution of income.
\newblock {\em Econometrica}, 52:647--664, 1984.

\bibitem{Mezzadri}
F.~Mezzadri.
\newblock How to generate random matrices from the classical compact groups.
\newblock {\em Notices Amer. Math. Soc.}, 54(5):592--604, 2007.

\bibitem{Moler1973}
C.~B. Moler and G.~W. Stewart.
\newblock An algorithm for generalized matrix eigenvalue problems.
\newblock {\em SIAM J. Numer. Anal.}, 10:241--256, 1973.

\bibitem{MPl14}
A.~Muhi\v{c} and B.~Plestenjak.
\newblock A method for computing all values {$\lambda$} such that {$A+\lambda
  B$} has a multiple eigenvalue.
\newblock {\em Linear Algebra Appl.}, 440:345--359, 2014.

\bibitem{Muirhead}
R.~J. Muirhead.
\newblock {\em Aspects of multivariate statistical theory}.
\newblock Wiley Series in Probability and Mathematical Statistics. John Wiley
  \& Sons, Inc., New York, 1982.

\bibitem{MultiParEig}
B.~Plestenjak.
\newblock {\em {\em MultiParEig}. Toolbox for multiparameter and singular
  eigenvalue problems}.
\newblock \url{www.mathworks.com/matlabcentral/fileexchange/47844-multipareig}.

\bibitem{Stewart}
G.~W. Stewart.
\newblock The efficient generation of random orthogonal matrices with an
  application to condition estimators.
\newblock {\em SIAM J. Numer. Anal.}, 17(3):403--409 (loose microfiche suppl.),
  1980.

\bibitem{StewartSun}
G.~W. Stewart and J.-G. Sun.
\newblock {\em Matrix Perturbation Theory}.
\newblock Academic Press, New York, 1990.

\bibitem{VD79}
P.~Van~Dooren.
\newblock The computation of {K}ronecker's canonical form of a singular pencil.
\newblock {\em Linear Algebra Appl.}, 27:103--140, 1979.

\bibitem{VD83}
P.~Van~Dooren.
\newblock Reducing subspaces: Definitions, properties and algorithms.
\newblock In B.~K{\aa}gstr{\"o}m and A.~Ruhe, editors, {\em Matrix Pencils},
  pages 58--73. Springer-Verlag, New York, 1983.

\bibitem{wilkinson1979kronecker}
J.~H. Wilkinson.
\newblock Kronecker's canonical form and the {$QZ$} algorithm.
\newblock {\em Linear Algebra Appl.}, 28:285--303, 1979.

\bibitem{Mathematica}
Wolfram Research Inc., Champaign, IL.
\newblock {\em Mathematica, Version 13.0}.

\end{thebibliography}
\bibliographystyle{abbrv}

\appendix
\section{A closer look at the distribution of $|\alpha||\beta|$}\label{sec:distr}
In this section, we derive explicit expressions for the probability density function (pdf) of $|\alpha||\beta|$.
For this purpose, we first write down the pdfs of $|\alpha|$, $|\beta|$\ch{, which we can
express with the generalized beta distribution of the first kind \cite{McDonald} in the real case and the Kumaraswamy distribution \cite{Jones} in the complex case}.

\begin{lemma} \label{lemma:dist} Under the assumptions of Proposition~\ref{prop:main}, we have
\begin{enumerate}
    \item[a)] $|\alpha_\CC|, |\beta_\CC|\sim \mathrm{GB}_1(2,1,1,k)=\mathrm{Kumaraswamy}(2,k)$ with the pdf
\[
  h(x;k)=2k x(1-x^2)^{k-1},
\]
\item[b)] $|\alpha_\RR|, |\beta_\RR| \sim \mathrm{GB}_1(2,1,\frac{1}{2},\frac{k}{2})$ with the pdf
\[
  g(x;k)=\frac{2}{B(\frac{1}{2},\frac{k}{2})}(1-x^2)^{k/2-1},
\]
\end{enumerate}
where $\mathrm{GB}_1(a,b,p,q)$ is the generalized beta distribution of the first kind and
$\mathrm{Kumaraswamy}(p,q)$ denotes the Kumaraswamy distribution with parameters $p$ and $q$.
\end{lemma}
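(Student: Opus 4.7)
The plan is to apply a straightforward change-of-variables argument to the distributions already established in Proposition~\ref{prop:main}. That proposition gives $|\alpha_\CC|^2, |\beta_\CC|^2 \sim \mathrm{Beta}(1,k)$ in the complex case and $|\alpha_\RR|^2, |\beta_\RR|^2 \sim \mathrm{Beta}(\tfrac{1}{2},\tfrac{k}{2})$ in the real case. Since the pdf of $\mathrm{Beta}(a,b)$ is $\frac{1}{\mathrm{B}(a,b)}u^{a-1}(1-u)^{b-1}$ on $[0,1]$, the task reduces to pushing these densities through the map $u\mapsto x=\sqrt{u}$ and then matching the resulting expressions to the $\mathrm{GB}_1$ and Kumaraswamy parameterizations.

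First, for the complex case, I would start from the $\mathrm{Beta}(1,k)$ density $k(1-u)^{k-1}$ of $|\alpha_\CC|^2$. Substituting $u=x^2$, $du=2x\,dx$, yields the density $2kx(1-x^2)^{k-1}$ for $|\alpha_\CC|$, which is precisely the stated $h(x;k)$. Comparing with the pdf $pq\,x^{p-1}(1-x^p)^{q-1}$ of $\mathrm{Kumaraswamy}(p,q)$ identifies this as $\mathrm{Kumaraswamy}(2,k)$, and a direct substitution into the $\mathrm{GB}_1(a,b,p,q)$ pdf with $(a,b,p,q)=(2,1,1,k)$ likewise recovers $h(x;k)$, establishing both names in part~(a).

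For the real case, I would apply the same substitution to the $\mathrm{Beta}(\tfrac{1}{2},\tfrac{k}{2})$ density $\frac{1}{\mathrm{B}(1/2,k/2)}u^{-1/2}(1-u)^{k/2-1}$ of $|\alpha_\RR|^2$. The substitution gives
\[
\frac{1}{\mathrm{B}(1/2,k/2)}x^{-1}(1-x^2)^{k/2-1}\cdot 2x=\frac{2}{\mathrm{B}(1/2,k/2)}(1-x^2)^{k/2-1},
\]
which matches $g(x;k)$. Plugging $(a,b,p,q)=(2,1,\tfrac{1}{2},\tfrac{k}{2})$ into the $\mathrm{GB}_1$ pdf then confirms the identification in part~(b).

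The derivation is essentially mechanical; the only potential pitfall is bookkeeping the Jacobian correctly on the half-line $x\in[0,1]$ (the square-root map is injective there, so no factor of $2$ is missed) and being consistent with the $\mathrm{GB}_1$ convention. No significant obstacle is anticipated beyond this routine parameter matching.
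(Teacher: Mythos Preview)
Your proposal is correct and matches the paper's own proof essentially line for line: both start from the Beta distributions of $|\alpha|^2,|\beta|^2$ given by Proposition~\ref{prop:main}, apply the change of variables $u\mapsto x=\sqrt{u}$ to obtain the pdf $2y f(y^2;p,q)$, and then identify the result with the Kumaraswamy and $\mathrm{GB}_1$ parameterizations.
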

\begin{proof}
The pdf of $X\sim \mathrm{Beta}(p,q)$ is given by
\[f(x;p,q)=\frac{1}{\mathrm{B}(p,q)}x^{p-1}(1-x)^{q-1}.\]
It follows that the pdf for $Y = X^{1/2}$ is
\[2yf(y^2;p,q)=\frac{2}{\mathrm{B}(p,q)}y^{2p-1}(1-y^2)^{q-1}.\]
Now, we apply Proposition \ref{prop:main} and insert the corresponding values of $p$, $q$. The proof is concluded
by identifying the obtained densities with the Kumaraswamy distribution in the complex case and the generalized beta distribution of the
first kind in the real case; see, e.g., \cite{Jones,McDonald}.
\end{proof}

The references~\cite{Jones,McDonald} used in the proof above also provide expressions for the moments of the
two distributions in Lemma~\ref{lemma:dist}, which can be used to verify the results of Lemma~\ref{lem:expect}.

In order to derive the pdf of $|\alpha|\cdot |\beta|$  from the result of Lemma~\ref{lemma:dist}, we have performed
symbolic computations in Wolfram Mathematica~\cite{Mathematica}.
Let $f_\CC(x;k)$ and $f_\RR(x;k)$  denote the pdfs for $|\alpha_\CC| |\beta_\CC|$ and $|\alpha_\RR| |\beta_\RR|$ , respectively.
Using that $|\alpha_\CC|, |\beta_\CC|$ are independent and
$|\alpha_\CC|, |\beta_\CC|\sim \mathrm{Kumaraswamy}(2,k)$, we obtain that
\begin{align*}
 f_\CC(x;k)&=
2k^2 x(x^2-1)^{2k-1}\mathrm{B}(k,k+1)\Big(\frac{k(x^2-1)}{2k+2}\leftindex_2F_1(k+1,k+1,2k+2,1-x^2)\\
&-2\leftindex_2F_1(k,k,2k+1,1-x^2)\Big),
\end{align*}
where $\leftindex_2F_1(a,b,c,z)$ is the hypergeometric function. One finds that $f_\CC$ takes the form
\begin{equation}\label{eq:fCC}
    f_\CC(x;k) = x\big( p_{\ch{k-1}}(x^2)+ q_{\ch{k-1}}(x^2)\ln x\big),
\end{equation}
where $p_{\ch{k-1}}$ and $q_{\ch{k-1}}$ are polynomials of degree $k-1$. Some explicit expressions for small $k$ are
\begin{align*}
    f_\CC(x;1) &= -4x \ln x,\\
    f_\CC(x;2) &= 16x \big(-1+x^2 - (1+x^2)\ln x\big),\\
    f_\CC(x;3) &= 18x \big(-3+3x^4 - 2(1+4x^2+x^4)\ln x\big),\\
    f_\CC(x;4) &= \frac{32}{3} x\big(-11-27x^2+27x^4+11x^6-6(1+9x^2+9x^4+x^6)\ln x\big).\\
\end{align*}

We were unable to obtain a closed form for the distribution of $|\alpha_\RR| |\beta_\RR|$, with
$|\alpha_\RR|, |\beta_\RR|$ independent and
 $|\alpha_\RR|, |\beta_\RR| \sim \mathrm{GB}_1(2,1,\frac{1}{2},\frac{k}{2})$.
For $k=2m$ we conjecture that
\[
f_\RR(x;2m) = p_{\ch{m-1}}(x^2)+ q_{\ch{m-1}}(x^2)\ln x,
\]
where $p_{\ch{m-1}}$ and $q_{\ch{m-1}}$ are polynomials of degree $m-1$. In addition to the fact that this looks similar to
\eqref{eq:fCC}, this conjecture is also supported by the following expressions for small $m$:
\begin{align*}
    f_\RR(x;2) &= -\ln x,\\
    f_\RR(x;4) &= \frac{9}{4}\big(-1+x^2-(1+x^2)\ln x\big),\\
    f_\RR(x;6) &= \frac{225}{128} \big(-3+3x^4-2(1+4x^2+x^4)\ln x\big).
\end{align*}

The graphs of $f_\CC(x;k)$ and $f_\RR(x;k)$ for $k=2,4,6,8$ are presented in Figure \ref{fig:pdf_c}. Note that
$\lim_{x\to 0}f_\CC(x;k)=0$ and $\lim_{x\to 0}f_\RR(x;k)=\infty$.
\begin{figure}[ht]
    \centering
    \includegraphics[width=5.8cm]{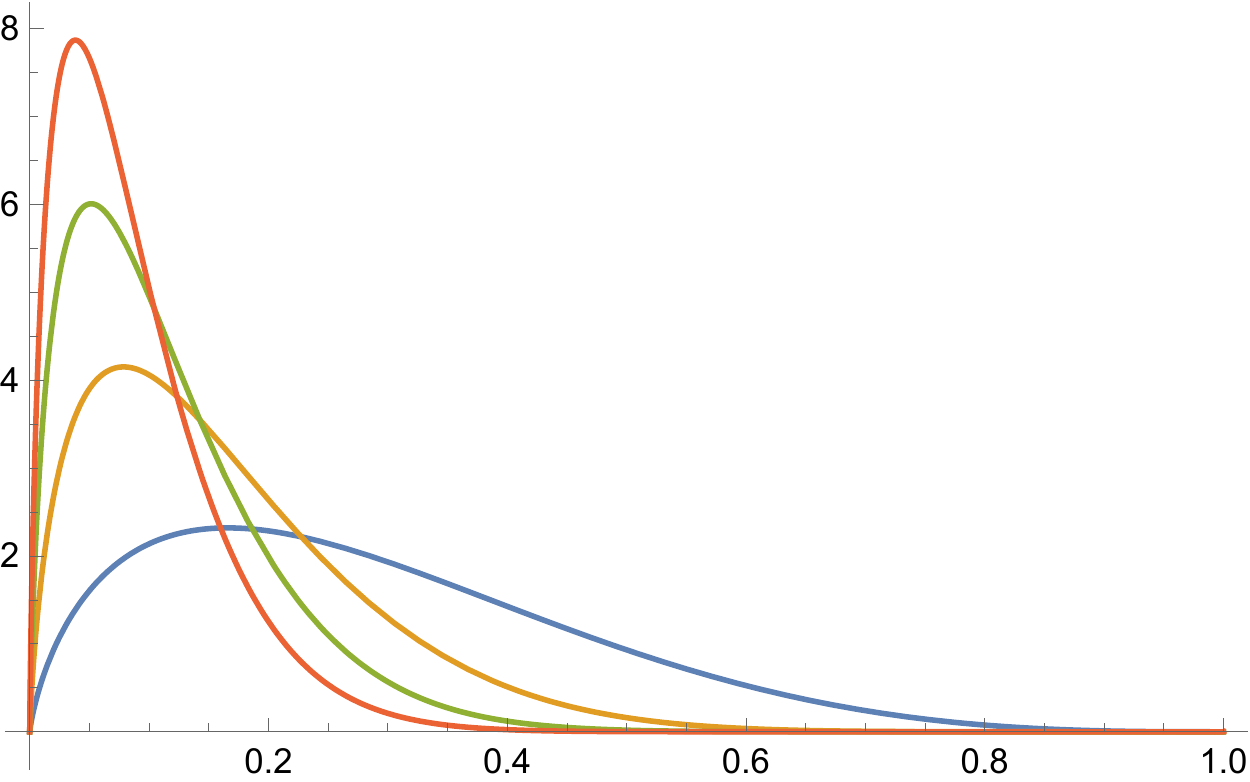}\ \
    \includegraphics[width=5.8cm]{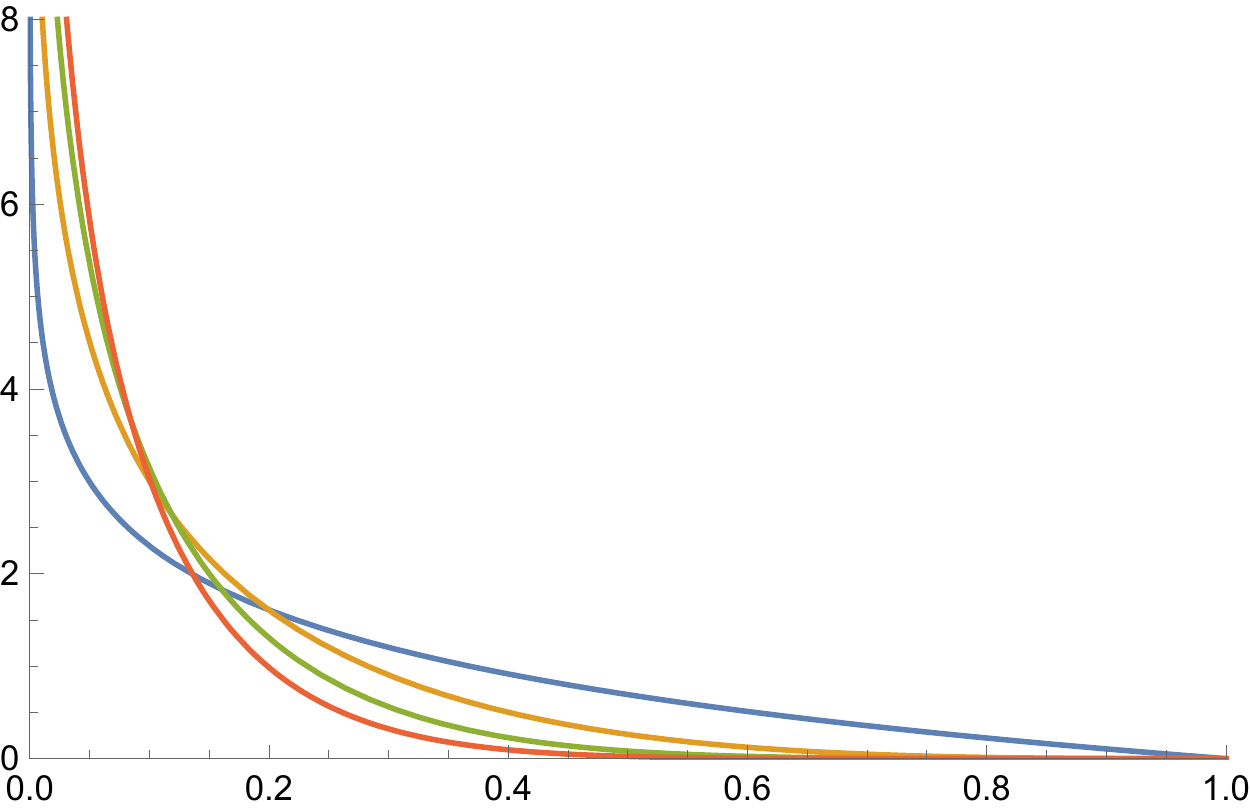}
    %\vspace{-0.7cm}

    \caption{Pdfs $f_\CC(x;k)$ (left) and $f_\RR(x;k)$ (right) for $k=2,4,6,8$}
    \label{fig:pdf_c}
\end{figure}
\end{document}